\newcommand{\darkred}[1]{#1}
\newcommand{\lightgreen}[1]{#1}
\numberwithin{equation}{section}
\newtheorem{theorem}{Theorem}[section]
\newtheorem{proposition}[theorem]{Proposition}
\newtheorem{corollary}[theorem]{Corollary}
\newtheorem{lemma}[theorem]{Lemma}
\theoremstyle{definition}
\newtheorem{remark}[theorem]{Remark}
\newtheorem{example}[theorem]{Example}
\newtheorem{definition}[theorem]{Definition}
\newtheorem{problem}[theorem]{Problem}
\newcommand{\CC}{\mathbb{C}}
\newcommand{\PP}{\mathbb{P}}
\newcommand{\ZZ}{\mathbb{Z}}
\newcommand{\RR}{\mathbb{R}}
\newcommand{\Dcal}{\mathcal{D}}
\newcommand{\tDcal}{\widetilde\Dcal}
\newcommand{\Pcal}{\mathcal{P}}
\newcommand{\divv}{\operatorname{div}}
\newcommand{\ooo}{\multiput(0,0)(10,0){3}{\circle{2}}}
\newcommand{\oooo}{\multiput(0,0)(10,0){4}{\circle{2}}}
\newcommand{\Eee}{\put(1,0){\line(1,0){8}}}
\newcommand{\eEe}{\put(11,0){\line(1,0){8}}}
\newcommand{\eeE}{\put(21,0){\line(1,0){8}}}
\newcommand{\EEe}{\qbezier(0.8,0.6)(4,5)(10,5)\qbezier(10,5)(16,5)(19.2,0.6)}
\newcommand{\eEE}{\qbezier(10.8,0.6)(14,5)(20,5)\qbezier(20,5)(26,5)(29.2,0.6)}
\newcommand{\EEE}{\qbezier(0.6,0.8)(4,7)(15,7)
\qbezier(15,7)(26,7)(29.4,0.8)}
\newcommand{\eOe}{\qbezier(10.8,0.6)(15,4)(19.2,0.6)
                  \qbezier(10.8,-0.6)(15,-4)(19.2,-0.6)}
\newcommand{\eeO}{\qbezier(20.8,0.6)(25,4)(29.2,0.6)
                  \qbezier(20.8,-0.6)(25,-4)(29.2,-0.6)}
\newcommand{\eOO}{\qbezier(10.8,0.6)(14,5)(20,5)\qbezier(20,5)(26,5)(29.2,0.6)
                  \qbezier(10.8,-0.6)(14,-5)(20,-5)\qbezier(20,-5)(26,-5)(29.2,-0.6)}
\begin{document}

%\ \vspace{-.3in}

\title[Labeled floor diagrams]{Labeled floor diagrams for plane curves}

\author{Sergey Fomin}
\address{\hspace{-.3in} Department of Mathematics, University of Michigan,
Ann Arbor, MI 48109, USA}
\email{fomin@umich.edu}

\author{Grigory Mikhalkin}
\address{\hspace{-.3in} Section de math\'ematiques, Universit\'e de Gen\`eve,
1211 Geneva, Switzerland}
\email{grigory.mikhalkin@unige.ch}

\date{\today
}

\thanks{Partially supported by NSF grant DMS-0555880 (S.~F.), 
by the SNSF (G.~M.), 
and by the Clay Mathematical Institute Book Fellowship (G.~M.).}

\subjclass[2000]{
Primary
14N10, % Enumerative problems (combinatorial problems)
Secondary
05A15, % Exact enumeration problems, generating functions
14N35. % Gromov-Witten invariants, quantum cohomology
}

\keywords{Gromov-Witten invariant, tropical curve, floor diagram,
  labeled tree}

\begin{abstract}
Floor diagrams are a class of weighted %acyclic
oriented graphs introduced by E.~Brugall\'e and
the second author. %~\cite{brugalle-mikhalkin}. 
Tropical geometry arguments lead to combinatorial descriptions of
(ordinary and relative) Gromov-Witten invariants 
of projective spaces in terms of  
floor diagrams and their generalizations.
In a number of cases, these descriptions can be used to obtain
explicit (direct or recursive) formulas for 
% recurrence relations (or differential equations for generating
% functions) for 
the corresponding enumerative invariants. 
In particular, we use this approach to enumerate 
rational curves of given degree passing through a collection
of points on the complex plane and having maximal tangency to a given
line. 
Another application of the combinatorial approach is a proof of a
conjecture by P.~Di~Francesco--C.~Itzykson and 
L.~G\"ottsche that in the case of a fixed cogenus,
the number of plane curves of degree~$d$ passing through suitably many
generic points is given by a polynomial in~$d$, assuming that~$d$ is
sufficiently large.
Furthermore, the proof provides a method for computing these ``node
polynomials.'' 

A \emph{labeled floor diagram} is obtained by labeling the vertices of
a floor diagram by the integers $1,\dots,d$ in a manner compatible
with the orientation. We show that labeled floor diagrams of genus 0
are equinumerous to labeled trees, and therefore counted by the
celebrated Cayley's formula. The corresponding bijections
lead to interpretations of the Kontsevich numbers (the genus-0
Gromov-Witten invariants of the projective plane) in terms of certain
statistics on trees.
\end{abstract}

\maketitle
%\ \vspace{-.45in}

\tableofcontents

%\newpage

\section*{Introduction}

The primary purpose of this paper is to advertise a
general paradigm for solving~a large class of problems of classical
enumerative geometry. Although the main ingredients of the 
approach described herein have already appeared in the literature, 
a coherent presentation, complete with new convincing applications,
has been lacking. 
Our goal is to fill the gap, and in doing so win over a few converts.

\smallskip

A typical problem of enumerative geometry asks for the number of
geometric objects, say complex algebraic varieties of specified kind,
which satisfy a number of incidence or tangency constraints. 
In many cases (admittedly subject to limitations, both technical and
intrinsic), one can reduce such a problem to its
\emph{tropical} counterpart, that is, to the problem of weighted 
enumeration of certain polyhedral complexes known as tropical varieties. 
This reduction %which can be called \emph{tropicalization}, 
constitutes the first phase of a solution. 

The goal of the second phase, which can be called \emph{discretization},
is to replace piecewise-linear objects of tropical geometry by purely
combinatorial ones. If this is done successfully, one obtains a 
manifestly positive combinatorial rule 
(similar in spirit to the various Littlewood-Richardson-type rules in
Schubert Calculus) that identifies the answer to the original
geometric problem as the number of combinatorial objects of a
particular, complicated but explicit, kind. 

The third and final phase is one of purely \emph{combinatorial
enumeration} of the relevant discrete objects. 
Ideally, albeit seldom, it yields a formula for the
numbers in question, or some associated generating function. 
Otherwise, a recursion would do, or else an equation 
(algebraic, differential, or functional) for the generating function. 
At the very least, one would like to 
relate the objects to be enumerated to some more familiar combinatorial
gadgets, placing the problem within a well developed context. 

\smallskip

In this paper, we discuss this approach as it applies to the problem
of enumerating plane complex algebraic curves with given
properties, or more precisely, the problem of computing the
Gromov-Witten invariants, both ordinary and relative, of the complex
projective plane~$\PP^2$. 
Recall that the \emph{Gromov-Witten invariant}~$N_{d,g}$ is the number of 
irreducible curves of degree~$d$ and genus~$g$
passing through a fixed generic configuration of $3d+g-1$ points on~$\PP^2$.
A more general \emph{relative} Gromov-Witten invariant
$N_{d,g}(\lambda,\rho)$ is the number of 
%irreducible curves of degree~$d$ and genus~$g$
such curves which, besides passing through appropriately many generic
points, satisfy tangency conditions with respect to a given line~$L$. 
The two integer partitions $\lambda$ and $\rho$ describe the degrees
of tangency at two generic collections of points on~$L$;
the points in the first collection are fixed while 
those in the second one are allowed to vary along~$L$. 

The tropical reduction for the problem of computing the
Gromov-Witten invariants~$N_{d,g}$ (resp., $N_{d,g}(\lambda,\rho)$) 
is accomplished by means of the \emph{correspondence theorem} 
established in~\cite{mikhalkin-05} (resp.,~\cite{mikhalkin-08}); 
see Theorem~\ref{th:geom-correspondence} (resp.,
Theorem~\ref{c-correspondence-alphabeta}). 
Even though this is the most substantial step among the three required
for a solution, we discuss it in less detail as the topic is already  
well covered in the existing literature. 

For the problem of computing the invariants~$N_{d,g}$, 
the discretization reduction has been recently given by
E.~Brugall\'e and the second author~\cite{brugalle-mikhalkin}
(see also an excellent exposition~\cite{brugalle}, in French), 
by establishing a bijection between the tropical curves in question
and certain \emph{markings} (``decorations'') 
of a particular kind of weighted acyclic graphs called
\emph{floor diagrams}; see Theorem~\ref{th:trop=combin}. 
This result extends to the relative setting as well;
see Theorem~\ref{th:trop=combin-two-ptns}. 

\smallskip

Apart from a review of the aforementioned results, 
the bulk of this paper is dedicated to the ``third phase'' of
combinatorial enumeration, as it applies to the problem at hand. 
Even though the general problem of (weighted) enumeration of marked
floor diagrams appears too unwieldy to allow for an explicit solution, 
the latter can be achieved in a number of particular instances.  
In Theorem~\ref{th:Nd0-ODE}, 
we enumerate irreducible rational curves
of given degree passing through a collection
of points on the complex plane and having \emph{maximal tangency} to a
given line; 
the point of tangency can be either prescribed or left unspecified. 
%Other successful applications include computation of 
We also compute the relative
Gromov-Witten invariants associated with nonsingular or uninodal
curves (Corollaries~\ref{cor:nonsing-rel} and~\ref{cor:uninodal-rel}),
and with curves passing through a triple of collinear points 
(Corollary~\ref{cor:collinear-triple}). 

\smallskip

If the number of nodes $\delta=\frac{(d-1)(d-2)}{2}-g$ is fixed while
$d$ varies, determining $N_{d,g}$ 
as a function of~$d$ is a classical problem with
venerable history; see Section~\ref{sec:node-polynomials} and
references therein. 
In 1994, P.~Di~Francesco and C.~Itzykson
hypothesized~\cite{difrancesco-itzykson} 
that, for $\delta$~fixed and $d$~sufficiently large, 
the Gromov-Witten invariant~$N_{d,g}$
(or equivalently the corresponding \emph{Severi degree})
is a polynomial in~$d$ (necessarily of degree~$2\delta$). 
A~more explicit version of this conjecture was proposed by 
L.~G\"ottsche \cite{goettsche}.
The cases $\delta\le 6$ of G\"ottsche's conjecture were established 
by I.~Vainsencher~\cite{vainsencher}; 
then S.~Kleiman and R.~Piene~\cite{kleiman-piene} extended these
results to $\delta\le 8$.
In Section~\ref{sec:node-polynomials},
we give a proof of this conjecture based on the combinatorial rule of
Theorem~\ref{th:bm-curves}. 
(Alternative proofs have been proposed in unpublished preprints by 
Y.~Choi~\cite{choi-97}, A.-K.~Liu~\cite{liu}, and Y.-J.~Tzeng~\cite{tzeng}.) 
We stress that our proof does not merely establish polynomiality of
Severi degrees: it provides a method for computing those ``node
polynomials'' 
explicitly and directly, without resorting to interpolation. 
Our method has been implemented, with a few improvements, by
F.~Block~\cite{block-fpsac}, who computed the node polynomials for all
$\delta\le 13$. 

\smallskip

We %suggest that it is advantageous
%from the perspective of enumerative combinatorics, to
reformulate the rule given 
in~\cite{brugalle-mikhalkin} in the language of 
 \emph{labeled floor diagrams} 
obtained by labeling the vertices of a floor diagram of degree~$d$ 
by the integers $1,\dots,d$ in a manner compatible with orientation.
This point of view, applied consistently thoughout the paper,
is not merely a matter of language or convenience.   
%and is validated by 
In Theorem~\ref{th:cayley}, we show 
that labeled floor diagrams of degree~$d$ and genus~$g=0$
are equinumerous to \emph{labeled trees} on $d$ vertices,
and hence counted by the celebrated Cayley's formula.
The corresponding bijections between labeled floor diagrams and trees 
yield interpretations of the %Kontsevich 
numbers $N_{d,0}$ in terms of certain statistics on trees.

\smallskip

Two well known recursive formulas for the Gromov-Witten
invariants of the projective plane are  
due to M.~Kontsevich~\cite{kontsevich-manin} (for $N_{d,0}$) and 
to L.~Caporaso and J.~Harris~\cite{caporaso-harris}
(for $N_{d,g}(\lambda,\rho)$), respectively; see
Section~\ref{sec:kontsevich-caporaso-harris}.  
Each of these formulas can in principle be obtained directly from the
corresponding combinatorial rule  
(Theorem~\ref{th:bm-curves} and
Theorem~\ref{th:combin-rule-two-ptns}, respectively). 
While the derivation of the Caporaso-Harris recursion is relatively
straightforward (see~\cite{arroyo-brugalle-lopez,brugalle}),
deducing Kontsevich's formula \eqref{eq:kontsevich}
seems to require substantial technical effort aimed at replicating the
original argument of Kontsevich's in a purely combinatorial setting. 

The paper by Caporaso and Harris contains two recursive formulas:
the recursion \cite[Theorem~1.1]{caporaso-harris}  
for the number of all (not necessarily irreducible) curves with given
properties, and another recursion in
\cite[Section~1.4]{caporaso-harris} for the relative Gromov-Witten
numbers $N_{d,g}(\lambda,\rho)$. 
The first recursion does not lead
to a manifestly positive rule for the numbers
$N_{d,g}(\lambda,\rho)$ as it has to be followed by
an involved inclusion-exclusion procedure 
%(involving negative signs of course) 
to restrict the count to irreducible curves.
The second recursion can in principle be used to obtain a positive
rule albeit an exceedingly cumbersome one. 

\smallskip

The paper is organized as follows.
Section~\ref{sec:labeled-floor-diagrams} introduces labeled floor
diagrams, %their multiplicities and markings,
and culminates in a reformulation of the main result
of~\cite{brugalle-mikhalkin} (see Theorem~\ref{th:bm-curves})
in this language.
Section~\ref{sec:tropical-curves} reviews the basics of plane tropical
curves.
Section~\ref{sec:combin-corresp-thms} discusses various versions of
the correspondence theorems, both geometric and combinatorial. 
Applications to computation of relative Gromov-Witten invariants
%$N_{d,g}(\lambda,\rho)$ 
are presented in Section~\ref{sec:Ndg-rel-examples}. 
The proof of the polynomiality conjecture of Di Francesco--Itzykson 
and G\"ottsche is given in
Section~\ref{sec:node-polynomials}. 
Section~\ref{sec:enumeration} is devoted to enumeration of
labeled floor diagrams and related objects. 
%and includes the aforementioned Theorem~\ref{th:cayley}.
In Section~\ref{sec:conjectures}, we formulate a number of conjectures
and open problems.
In Section~\ref{sec:welschinger}, we briefly discuss the
related problem of determining the Welschinger invariants 
for \emph{real} plane algebraic curves. 

%\medskip

In order to make the text accessible to both algebraic geometers and
enumerative combinatorialists,
we tried to make it as self-contained as possible, 
and in particular introduce the basic relevant background without
referring to outside sources. 

%\section*{Acknowledgments}
\medskip

We thank Florian Block, Philippe Di~Francesco, Bill Fulton, Rahul
Pandharipande, Ragni Piene, Michael Shapiro, Alek Vainshtein, and Ravi
Vakil for valuable comments. 

%\newpage

\section{Labeled floor diagrams and their markings}
\label{sec:labeled-floor-diagrams}

\subsection{Preliminaries on floor diagrams}

We use standard combinatorial terminology; see, e.g.,
\cite{van-lint-wilson, ec1, ec2}.

\begin{definition}[\emph{Labeled floor diagram and its multiplicity}]
\label{def:lfd}
Let $d>0$ and $g\ge 0$~be integers.
A~(connected) \emph{labeled floor diagram}~$\Dcal$
of degree~$d$ and genus~$g$
is a connected oriented graph $G=(V,E)$ on
a linearly ordered $d$-element vertex set~$V$ % $V=\{1,\dots,d\}$,
together with a \emph{weight} function $w:E\to\ZZ_{>0}\,$,
such that the following conditions are satisfied:
\begin{itemize}
\item %[(a)]
\emph{(Genus)}
The edge set $E$ consists of $d+g-1$ edges.
Equivalently, 
%the rank of the first homology group 
the first Betti number of~$G$ is equal
to~$g$, assuming that $G$ is stripped of orientation and viewed as a
topological space (a $1$-dimensional simplicial complex).
\item %[(b)]
\emph{(Compatibility with linear ordering on~$V$)}
Each edge in $E$ is directed from a vertex $u$ to a vertex $v>u$.
Thus $G$ is acyclic, and in particular has no loops.
Multiple edges are allowed.
\item %[(c)]
\emph{(Divergence)}
For each vertex $v\in V$, we have
\begin{equation}
\label{eq:div}
\divv(v)\,\stackrel{\rm def}{=\!=}\,
\sum_{v\stackrel{e}{\longrightarrow}\circ} w(e)
-\sum_{\circ\stackrel{e}{\longrightarrow}v} w(e)
\le 1,
\end{equation}
where the first sum (respectively, the second one)
is over all edges $e$ directed away from~$v$
(respectively, towards~$v$).
\end{itemize}
The number
\begin{equation}
\label{eq:mult-D}
\mu(\Dcal)=\mu^\CC(\Dcal)=\prod_{e\in E} (w(e))^2
\end{equation}
is called the (complex) \emph{multiplicity} of a labeled floor
diagram~$\Dcal$.
\end{definition}

\begin{remark}
An (unlabeled) \emph{floor diagram}, introduced
in~\cite{brugalle-mikhalkin} (in a more general setting of curves
in~$\CC\PP^n$),
is essentially a labeled floor diagram considered up to an isomorphism
of weighted oriented graphs.
There are also other discrepancies with the setup
in~\cite{brugalle-mikhalkin}, but they can be
viewed as a matter of convention.
In this paper, we work exclusively with labeled floor diagrams.
In our opinion, this approach is more natural
%both simpler and more elegant,
from a combinatorial perspective.
\end{remark}

%Figures~\ref{fig:g=0} and~\ref{fig:d=4} 
All labeled floor diagrams with $\le 4$ vertices are listed in 
Appendix~A. 
Each vertex set is ordered left to right; each edge is oriented
towards the right.

\begin{example}
\label{example:lfd4}
An example of a labeled floor diagram $\Dcal$ is shown below:
%\begin{center}
\begin{equation}
\label{eq:fd4}
\begin{picture}(80,15)(30,-4)\setlength{\unitlength}{4pt}\thicklines
\oooo\Eee\eOe\eeE
\put(25,1.5){\makebox(0,0){$2$}}
\put(7,0){\vector(1,0){1}}
\put(17.5,1.75){\vector(2,-1){1}}
\put(17.5,-1.75){\vector(2,1){1}}
\put(27,0){\vector(1,0){1}}
\end{picture}
\end{equation}
%\end{center}
%\vspace{.0in}
This labeled floor diagram has degree $d=4$ and genus $g=1$.
It has $d=4$ vertices, $d+g-1=4$ edges, vertex divergencies $\divv(v)$
equal to $1,1,0,-2$, respectively, 
and multiplicity~$\mu(\Dcal)=4$. 
\end{example}

\begin{definition}
[\emph{Marking of a labeled floor diagram}] %~\cite{brugalle-mikhalkin}
\label{def-marking}
Let $\Dcal$ be a labeled floor diagram of
degree~$d$ and genus~$g$, as in Definition~\ref{def:lfd}. 
A~\emph{marking} of~$\Dcal$ is a ``combinatorial decoration''
of~$\Dcal$ obtained by the following procedure. 
(We will illustrate the steps of this procedure using the diagram from
Example~\ref{example:lfd4}.) 

\textbf{Step~1.}
For each vertex $v\in V$, introduce $1-\divv(v)$
(cf.~\eqref{eq:div})
new distinct vertices,
and connect $v$ with each of them by a single edge directed away
from~$v$:
 \begin{center}
\begin{picture}(80,45)(30,-25)\setlength{\unitlength}{4pt}\thicklines
\oooo\Eee\eOe\eeE
\put(25,1.5){\makebox(0,0){$2$}}
\put(7,0){\vector(1,0){1}}
\put(17.5,1.75){\vector(2,-1){1}}
\put(17.5,-1.75){\vector(2,1){1}}
\put(27,0){\vector(1,0){1}}

\put(20.7,-0.7){\line(1,-1){4}}
\put(20.7,-0.7){\vector(1,-1){3}}
\put(25,-5){\circle*{2}}
\put(30.6,-0.8){\line(1,-1){4}}
\put(30.8,-0.6){\line(2,-1){9}}
\put(30.9,-0.4){\line(3,-1){14}}
\put(30.6,-0.8){\vector(1,-1){2.5}}
\put(30.8,-0.6){\vector(2,-1){5}}
\put(30.9,-0.4){\vector(3,-1){7.5}}
\put(35,-5){\circle*{2}}
\put(40,-5){\circle*{2}}
\put(45,-5){\circle*{2}}

\end{picture}
\end{center}
%After this step, we will ignore the edge weights~$w(e)$.

\textbf{Step~2.}
Split each original edge $e\in E$ in two,
by inserting an extra vertex in the middle of~$e$;
the resulting two edges inherit $e$'s orientation and weight:
\begin{center}
\begin{picture}(80,45)(30,-25)\setlength{\unitlength}{4pt}\thicklines
\oooo\Eee\eOe\eeE
\put(22.5,2){\makebox(0,0){$2$}}
\put(27.5,2){\makebox(0,0){$2$}}
\put(2.5,0){\vector(1,0){1}}
\put(7.5,0){\vector(1,0){1}}
\put(17.5,1.75){\vector(2,-1){1}}
\put(17.5,-1.75){\vector(2,1){1}}
\put(12.5,1.75){\vector(2,1){1}}
\put(12.5,-1.75){\vector(2,-1){1}}
\put(22.5,0){\vector(1,0){1}}
\put(27.5,0){\vector(1,0){1}}
\put(5,0){\circle*{2}}
\put(15,2.5){\circle*{2}}
\put(15,-2.5){\circle*{2}}
\put(25,0){\circle*{2}}
\put(20.7,-0.7){\line(1,-1){4}}
\put(20.7,-0.7){\vector(1,-1){3}}
\put(25,-5){\circle*{2}}
\put(30.6,-0.8){\line(1,-1){4}}
\put(30.8,-0.6){\line(2,-1){9}}
\put(30.9,-0.4){\line(3,-1){14}}
\put(30.6,-0.8){\vector(1,-1){2.5}}
\put(30.8,-0.6){\vector(2,-1){5}}
\put(30.9,-0.4){\vector(3,-1){7.5}}
\put(35,-5){\circle*{2}}
\put(40,-5){\circle*{2}}
\put(45,-5){\circle*{2}}

\end{picture}
\end{center}
\noindent
Let $\widetilde G=(\widetilde V,\widetilde E)$ denote the acyclic
directed graph obtained after Steps~1--2.
It is easy to see that $\widetilde G$
has $3d+g-1$ vertices and $3d+2g-2$ edges.

\textbf{Step~3.}
Extend the linear ordering on~$V$ to~$\widetilde V$
%by the numbers $1,2,\dots,3d+g-1$
so that, as before, each edge in~$\widetilde E$
is directed from a smaller to a larger vertex:
\begin{center}
\begin{picture}(80,45)(30,-15)\setlength{\unitlength}{4pt}\thicklines
\put(32.5,-2){\makebox(0,0){$2$}}
\put(27.5,-2){\makebox(0,0){$2$}}
\multiput(0,0)(10,0){2}{\circle{2}}
\multiput(25,0)(10,0){2}{\circle{2}}
\multiput(5,0)(10,0){2}{\circle*{2}}
\multiput(20,0)(10,0){4}{\circle*{2}}
\multiput(45,0)(10,0){2}{\circle*{2}}
\put(1,0){\line(1,0){8}}
\put(11,0){\line(1,0){3}}
\put(2.5,0){\vector(1,0){1}}
\put(7.5,0){\vector(1,0){1}}
\put(12.5,0){\vector(1,0){1}}
\qbezier(10.8,0.6)(15,4)(19.2,0.6)
\put(17.5,1.75){\vector(2,-1){1}}
\qbezier(15.8,-0.6)(20,-4)(24.2,-0.6)
\put(22.5,-1.75){\vector(2,1){1}}
\put(21,0){\line(1,0){3}}
\put(22.5,0){\vector(1,0){1}}
\put(26,0){\line(1,0){8}}
\put(27.5,0){\vector(1,0){1}}
\put(32.5,0){\vector(1,0){1}}
\put(36,0){\line(1,0){3}}
\put(37.5,0){\vector(1,0){1}}

\qbezier(25.6,0.8)(29,5)(35,5)\qbezier(35,5)(41,5)(44.4,0.8)
\put(30,4){\vector(2,1){1}}

\qbezier(35.6,0.8)(39,5)(45,5)\qbezier(45,5)(51,5)(54.4,0.8)
\put(50,4.1){\vector(2,-1){1}}

\qbezier(35.8,0.6)(38,3)(42.5,3)\qbezier(42.5,3)(47,3)(49.2,0.6)
\put(46,2.6){\vector(3,-1){1}}

\end{picture}
\end{center}
\noindent
%Identify the linearly ordered set~$\widetilde V$ with 
%$\{1,2,\dots,3d+g-1\}$ to obtain a directed graph~$\tDcal$ 
%(with weighted edges) on the vertex set $\{1,2,\dots,3d+g-1\}$. 
%
The resulting object $\tDcal$ is called a \emph{marked floor diagram},
or a \emph{marking} of the original labeled floor diagram~$\Dcal$.
Thus $\tDcal$~is a directed graph~$\widetilde G=(\widetilde
V,\widetilde E)$ as above, together with a linear order on~$\widetilde V$
that extends the linear order on~$V$.
More precisely, we consider $\tDcal$ up to an automorphism that
fixes~$V$, so that linear orderings on~$\widetilde V$ which produce
isomorphic weighted directed graphs are viewed as the same marking
of~$\Dcal$.

\iffalse
\begin{center}
\begin{picture}(80,50)(30,-25)\setlength{\unitlength}{4pt}\thicklines
\oooo\Eee\eOe\eeE
\put(0,2){\makebox(0,0){$\scriptstyle1$}}
\put(5,2){\makebox(0,0){$\scriptstyle2$}}
\put(10,2){\makebox(0,0){$\scriptstyle3$}}
\put(15,4.5){\makebox(0,0){$\scriptstyle4$}}
\put(15,-0.5){\makebox(0,0){$\scriptstyle5$}}
\put(20,2){\makebox(0,0){$\scriptstyle6$}}
\put(25,2){\makebox(0,0){$\scriptstyle7$}}
\put(30,2){\makebox(0,0){$\scriptstyle8$}}
\put(35,-7){\makebox(0,0){$\scriptstyle9$}}
\put(25,-7){\makebox(0,0){$\scriptstyle10$}}
\put(40,-7){\makebox(0,0){$\scriptstyle11$}}
\put(45,-7){\makebox(0,0){$\scriptstyle12$}}
\put(5,0){\circle*{2}}
\put(15,2.5){\circle*{2}}
\put(15,-2.5){\circle*{2}}
\put(25,0){\circle*{2}}
\put(20.7,-0.7){\line(1,-1){4}}
%\put(20.7,-0.7){\vector(1,-1){3}}
\put(25,-5){\circle*{2}}
\put(30.6,-0.8){\line(1,-1){4}}
\put(30.8,-0.6){\line(2,-1){9}}
\put(30.9,-0.4){\line(3,-1){14}}
%\put(30.6,-0.8){\vector(1,-1){2.5}}
%\put(30.8,-0.6){\vector(2,-1){5}}
%\put(30.9,-0.4){\vector(3,-1){7.5}}
\put(35,-5){\circle*{2}}
\put(40,-5){\circle*{2}}
\put(45,-5){\circle*{2}}
\end{picture}
\end{center}
\fi
\end{definition}

%\medskip

The number of markings of $\Dcal$ is denoted by~$\nu(\Dcal)$.

\begin{example}
In our running example (see~\eqref{eq:fd4}), we have $\nu(\Dcal)=6$.
These $6$~markings can be obtained from the diagram
in~\eqref{eq:mfd-12} by relocating the right endpoint of the edge
connecting the 6th vertex (counting from the left) to the 10th vertex
to any of the 5 alternative positions. 

Note that switching the 4th and 5th vertices does not change the
marking since it produces an isomorphic object. 
\end{example}

For many more examples, see Appendix~A. 
%Figures~\ref{fig:g=0} and~\ref{fig:d=4}.

\medskip

Since the vertex set $\widetilde V$ is linearly ordered,
it is convenient to identify it with $\{1,2,\dots,3d+g-1\}$. 
(This also takes care of the isomorphism issue.) 
Also, we do not have to indicate the orientation of the edges once the
ordering of vertices has been fixed.
So for example the marked floor diagram $\widetilde\Dcal$ above can
just as well be drawn without the arrows:
\begin{equation}
\label{eq:mfd-12}
\begin{picture}(80,30)(30,-3)\setlength{\unitlength}{4pt}\thicklines
\put(32.5,-2){\makebox(0,0){$2$}}
\put(27.5,-2){\makebox(0,0){$2$}}
\multiput(0,0)(10,0){2}{\circle{2}}
\multiput(25,0)(10,0){2}{\circle{2}}
\multiput(5,0)(10,0){2}{\circle*{2}}
\multiput(20,0)(10,0){4}{\circle*{2}}
\multiput(45,0)(10,0){2}{\circle*{2}}
\put(1,0){\line(1,0){8}}
\put(11,0){\line(1,0){3}}
%\put(2.5,0){\vector(1,0){1}}
%\put(7.5,0){\vector(1,0){1}}
%\put(12.5,0){\vector(1,0){1}}
\qbezier(10.8,0.6)(15,4)(19.2,0.6)
%\put(17.5,1.75){\vector(2,-1){1}}
\qbezier(15.8,-0.6)(20,-4)(24.2,-0.6)
%\put(22.5,-1.75){\vector(2,1){1}}
\put(21,0){\line(1,0){3}}
%\put(22.5,0){\vector(1,0){1}}
\put(26,0){\line(1,0){8}}
%\put(27.5,0){\vector(1,0){1}}
%\put(32.5,0){\vector(1,0){1}}
\put(36,0){\line(1,0){3}}
%\put(37.5,0){\vector(1,0){1}}

\qbezier(25.6,0.8)(29,5)(35,5)\qbezier(35,5)(41,5)(44.4,0.8)
%\put(30,4){\vector(2,1){1}}

\qbezier(35.6,0.8)(39,5)(45,5)\qbezier(45,5)(51,5)(54.4,0.8)
%\put(50,4.1){\vector(2,-1){1}}

\qbezier(35.8,0.6)(38,3)(42.5,3)\qbezier(42.5,3)(47,3)(49.2,0.6)
%\put(46,2.6){\vector(3,-1){1}}

\end{picture}
\end{equation}
\smallskip

We note that even though the underlying graph $\widetilde G$
of a marked floor diagram~$\widetilde\Dcal$ is naturally a Hasse
diagram of a partially ordered set,
it would be incorrect to define the markings of~$\Dcal$ simply
as linear extensions %(equivalently, natural labelings)
of this poset
%(see \cite[Section~3.5]{ec1})
because such a definition would ignore the
condition of compatibility of a linear extension with the original
linear order on~$V$.

\subsection{Combinatorial rules for Gromov-Witten invariants and
  Severi \hbox{degrees}}
\label{sec:Ndg+Severi}

The following result can be seen to be a restatement,
in the language introduced above,
of the first claim in \cite[Theorem~1]{brugalle-mikhalkin}.

\begin{theorem} %[{\rm\cite[Theorem~1]{brugalle-mikhalkin}}]
\label{th:bm-curves}
The Gromov-Witten invariant~$N_{d,g}$ is equal to
\begin{equation}
\label{eq:N=sum-mu-nu}
N_{d,g}=\sum_\Dcal \mu(\Dcal)\,\nu(\Dcal),
\end{equation}
the sum over all labeled floor diagrams~$\Dcal$ of degree~$d$ and
genus~$g$.
\end{theorem}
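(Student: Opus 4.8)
The plan is to deduce Theorem~\ref{th:bm-curves} from the first assertion of \cite[Theorem~1]{brugalle-mikhalkin} by setting up an explicit dictionary between the two combinatorial frameworks and then checking that the two weighted counts agree. As noted in the Remark following Definition~\ref{def:lfd}, an unlabeled floor diagram in the sense of \cite{brugalle-mikhalkin} is precisely the isomorphism class of a labeled floor diagram~$\Dcal$ under isomorphisms of weighted oriented graphs that forget the labeling; the remaining discrepancies (orientation conventions, the divergence normalization, the way elevators and markings are recorded) are purely notational and are straightforward, if tedious, to reconcile. I would begin by spelling out this dictionary carefully, verifying in particular that the complex multiplicity $\mu(\Dcal)=\prod_e w(e)^2$ of~\eqref{eq:mult-D} coincides with the multiplicity attached to a floor diagram in~\cite{brugalle-mikhalkin}, so that the whole content of the theorem is reduced to a matching of \emph{counts} rather than of weights.

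The core of the argument is then an orbit-counting computation. I would group the labeled floor diagrams~$\Dcal$ on the right-hand side of~\eqref{eq:N=sum-mu-nu} according to their underlying unlabeled isomorphism type~$D$. Since both $\mu(\Dcal)$ and the marking number $\nu(\Dcal)$ depend only on this isomorphism type, the labeled sum collapses to $\sum_D \lambda(D)\,\mu(D)\,\nu(D)$, where $\lambda(D)$ denotes the number of labeled floor diagrams of type~$D$. By the orbit--stabilizer theorem, a bijection from the vertices of~$D$ to $\{1,\dots,d\}$ yields a valid labeling exactly when it is a linear extension of the partial order induced by the orientation, and two such bijections give the same labeled diagram precisely when they differ by an automorphism of~$D$; hence $\lambda(D)=e(D)/|\mathrm{Aut}(D)|$, where $e(D)$ is the number of linear extensions. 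The goal is to confirm that $\lambda(D)\,\nu(D)$ reproduces exactly the marking count of~$D$ used in~\cite{brugalle-mikhalkin}.

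The main obstacle, and the step that requires genuine care, is the reconciliation of the two ``up to automorphism'' conventions governing the marking counts. In the labeled setting a marking is counted only up to an automorphism that \emph{fixes}~$V$ (see the closing remarks of Definition~\ref{def-marking}), whereas in~\cite{brugalle-mikhalkin} markings of an unlabeled diagram are counted up to its full automorphism group. I expect the cleanest route is to pass to fully labeled marked diagrams, in which $\widetilde V$ is identified with $\{1,\dots,3d+g-1\}$ as in~\eqref{eq:mfd-12}: at that level every object is rigid, so that the automorphism factor hidden in $\lambda(D)=e(D)/|\mathrm{Aut}(D)|$ cancels exactly against the automorphism factor implicit in the unlabeled marking count of~$D$. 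Throughout, one must respect the warning recorded after Definition~\ref{def-marking}---that markings are \emph{not} the arbitrary linear extensions of the Hasse diagram of~$\widetilde G$, but only those compatible with the prescribed order on~$V$---so that no spurious markings are counted on either side. Once this bookkeeping is complete, the two expressions for~$N_{d,g}$ agree, and the theorem follows.
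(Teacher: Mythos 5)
Your reduction to \cite[Theorem~1]{brugalle-mikhalkin} founders on the assertion that ``both $\mu(\Dcal)$ and the marking number $\nu(\Dcal)$ depend only on this isomorphism type.'' That is true for $\mu$ but false for $\nu$: the number of markings depends on the actual linear order on $V$, not merely on the underlying weighted oriented graph. The paper's own Appendix~A supplies a counterexample. For $d=4$, $g=0$, the labeled floor diagrams with edge sets $\{1\to 2,\ 2\to 4,\ 3\to 4\}$, $\{1\to 3,\ 3\to 4,\ 2\to 4\}$, and $\{2\to 3,\ 3\to 4,\ 1\to 4\}$ (all weights~$1$) are isomorphic as weighted oriented graphs---each is a path $a\to b\to s$ together with an edge $c\to s$---yet their marking numbers are $\nu=24$, $46$, and $32$, respectively. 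Consequently the labeled sum does not collapse to $\sum_D \lambda(D)\,\mu(D)\,\nu(D)$, and the product $\lambda(D)\,\nu(D)$ cannot ``reproduce the marking count of $D$'': no single value of $\nu$ is attached to an isomorphism class. The correct bookkeeping is additive, not multiplicative. A marking in the unlabeled setting determines, by restriction to the floor vertices, a compatible linear order on $V$, i.e.\ a labeled representative; hence the unlabeled marking count of a type $D$ equals $\sum_{\Dcal\text{ of type }D}\nu(\Dcal)$ (in the example above, $24+46+32$), a sum whose terms are generally unequal. With that repair your dictionary does yield \eqref{eq:N=sum-mu-nu} from the Brugall\'e--Mikhalkin statement, and the orbit--stabilizer computation $\lambda(D)=e(D)/|\mathrm{Aut}(D)|$ becomes unnecessary.

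You should also note that this translation argument is not the route the paper actually takes: although Theorem~\ref{th:bm-curves} is presented as a restatement of \cite[Theorem~1]{brugalle-mikhalkin}, the paper proves it independently in Section~\ref{sec:combin-corresp-thms}. There, Theorem~\ref{th:trop=combin} establishes a bijection $h\mapsto\tDcal(h,\Pcal)$ between irreducible plane tropical curves of degree~$d$ and genus~$g$ passing through a vertically stretched configuration~$\Pcal$ and marked floor diagrams of the same degree and genus (proved by reconstructing each floor explicitly from the combinatorics of the marking), and this is combined with the geometric correspondence theorem of~\cite{mikhalkin-05} (Theorem~\ref{th:geom-correspondence}), under which each such tropical curve~$h$ lifts to exactly $\mu(\Dcal(h))$ complex curves; formula~\eqref{eq:N=sum-mu-nu} then follows as stated in Theorem~\ref{th:combin-correspondence}. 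So even after fixing the counting error, your proof differs from the paper's in that it outsources all geometry to \cite{brugalle-mikhalkin}, whereas the paper's argument is self-contained modulo the correspondence theorem.
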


To rephrase, the number~$N_{d,g}$ is obtained by enumerating marked
%(labeled) 
floor diagrams $\tDcal$ of degree~$d$ and genus~$g$, each taken with
its multiplicity $\mu(\tDcal)=\mu(\Dcal)$. 

The origins of Theorem~\ref{th:bm-curves} lie in the realm of
tropical geometry. 
They are discussed in Section~\ref{sec:combin-corresp-thm-ordinary}, 
%(cf.\ Theorem~\ref{th:combin-correspondence}), 
following a review of the relevant background on tropical curves in 
Section~\ref{sec:tropical-curves}.  
%We then continue with further refinements and extensions of this
%result, and of the associated combinatorial constructions. 

\pagebreak[3]

\begin{example}
\label{example:N30}
The values of $\mu(\Dcal)$ and $\nu(\Dcal)$ for all diagrams~$\Dcal$ 
with $d\le 4$ are listed in Appendix~A. 
%Figure~\ref{fig:g=0}. 
The formula~\eqref{eq:N=sum-mu-nu} then gives:
\smallskip

$N_{1,0}=1\cdot 1=1$ (unique line through $2$ generic
  points in~$\CC^2$),
\smallskip

$N_{2,0}=1\cdot 1=1$ (unique conic through $5$ generic
  points in~$\CC^2$),
\smallskip

$N_{3,0}=1\cdot 5+4\cdot 1+1\cdot 3=12$
(rational cubics through $8$ generic
  points in~$\CC^2$),
\smallskip

$N_{4,0}=1\cdot 40+4\cdot8+\cdots
%1\cdot 35+4\cdot15+16\cdot6+36\cdot1+1\cdot45+4\cdot18+9\cdot3+1\cdot24+4\cdot3+1\cdot46+4\cdot7+1\cdot32+4\cdot5
+1\cdot15
=620$
(rational quartics, $11$ generic
  points in~$\CC^2$), 
\smallskip
%Using the values listed in Figure~\ref{fig:d=4}, we get: 

$N_{3,1}=1\cdot 1=1$ (unique cubic through $9$ generic
  points in~$\CC^2$),
\smallskip

$N_{4,1}=1\cdot 26+4\cdot 4+\cdots +1\cdot 6=225$ 
(elliptic quartics, $12$ generic
  points in~$\CC^2$),
\smallskip

$N_{4,2}=1\cdot 3+1\cdot 5+\cdots +4\cdot 2=27$ 
(genus~$2$ quartics, $13$ generic
  points in~$\CC^2$),
\smallskip

$N_{4,3}=1\cdot 1=1$ 
(unique quartic through $14$ generic
  points in~$\CC^2$). 
\end{example}

Several approaches have been suggested to the computation of the
Gromov-Witten numbers~$N_{d,g}$, most notably the Caporaso-Harris
recursive algorithm~\cite{caporaso-harris}; 
see also \cite{gathmann03, goettsche, kleiman-piene, vainsencher, vakil2000}. 
The values~$N_{d,g}$ for small $d$ and~$g$ have been tabulated many
times over (see \emph{ibid.}); we do it again in
Figure~\ref{fig:Ndg}. 

\begin{figure}[htbp]
\begin{center}
\begin{tabular}{c|c|c|c|c|c|c}
        & $d=1$ & $d=2$ & $d=3$ & $d=4$ & $d=5$ & $d=6$ % & $d=7$ 
\\ %[.1in]
\hline 
&&&&&&\\[-.15in]
$N_{d,0}$ & 1 & 1 & 12 & 620 & 87304 & 26312976 \\% & 14616808192\\
$N_{d,1}$ & 0 & 0 & 1  & 225 & 87192 & 57435240 \\% & 60478511040\\
$N_{d,2}$ & 0 & 0 & 0  & 27  & 36855 & 58444767 \\% & 122824720116\\
$N_{d,3}$ & 0 & 0 & 0  & 1   & 7915  & 34435125 \\% & 153796445095\\
$N_{d,4}$ & 0 & 0 & 0  & 0   & 882   & 12587820 \\% & 128618514477
$N_{d,5}$ & 0 & 0 & 0  & 0   & 48    & 2931600 \\
$N_{d,6}$ & 0 & 0 & 0  & 0   & 1    & 437517
\end{tabular}
\end{center}
\caption{Gromov-Witten invariants $N_{d,g}$ for $d\le 6$ and $g\le 6$}
\label{fig:Ndg}
\end{figure}

\begin{figure}[htbp]
\begin{center}
\begin{tabular}{c|c|c|c|c|c|c}
        & $d=1$ & $d=2$ & $d=3$ & $d=4$ & $d=5$ & $d=6$ %& $d=7$ 
\\ %[.1in]
\hline 
&&&&&&\\[-.15in]
$N^{d,0}$ & 1 & 1 & 1  & 1   & 1     & 1        \\%& 1 \\
$N^{d,1}$ & 0 & \emph{3} & 12 & 27  & 48    & 75       \\%& 108 \\
$N^{d,2}$ & 0 & 0 & \emph{21} & 225 & 882   & 2370     \\%& 5175 \\
$N^{d,3}$ & 0 & 0 & \emph{15} & \emph{675} & 7915  & 41310    \\%& 145383 \\
$N^{d,4}$ & 0 & 0 & 0  & \emph{666} & \emph{36975} & 437517   \\%& 2667375 \\
$N^{d,5}$ & 0 & 0 & 0  & \emph{378} & \emph{90027} & \emph{2931831}  \\%& 33720354\\
$N^{d,6}$ & 0 & 0 & 0  & \emph{105} & \emph{109781}& \emph{12597900}   %& 302280963
%
%$N^{d,0}$ & 1 & 1 & 1  & 1   & 1     & 1        \\%& 1 \\
%$N^{d,1}$ & 0 & 3 & 12 & 27  & 48    & 75       \\%& 108 \\
%$N^{d,2}$ & 0 & 0 & 21 & 225 & 882   & 2370     \\%& 5175 \\
%$N^{d,3}$ & 0 & 0 & 15 & 675 & 7915  & 41310    \\%& 145383 \\
%$N^{d,4}$ & 0 & 0 & 0  & 666 & 36975 & 437517   \\%& 2667375 \\
%$N^{d,5}$ & 0 & 0 & 0  & 378 & 90027 & 2931831  \\%& 33720354\\
%$N^{d,6}$ & 0 & 0 & 0  & 105 & 109781& 12597900   %& 302280963
\end{tabular}
\end{center}
\caption{Severi degrees $N^{d,\delta}$ for $d\le 6$ and $\delta\le 6$. 
The numbers in italics include reducible curves.}
\label{fig:severi-degrees}
\end{figure}

Closely related to the numbers $N_{d,g}$ are the \emph{Severi
  degrees}~$N^{d,\delta}$, defined as follows:
$N^{d,\delta}$ is the number of (possibly reducible) degree~$d$ plane
curves which have $\delta$ nodes and pass through a generic configuration of
$\frac{d(d+3)}{2}-\delta$ points on the plane. 
Figure~\ref{fig:severi-degrees} shows the values $N^{d,\delta}$ for
  small $d$ and~$\delta$. 

For an irreducible plane curve of degree~$d$, genus~$g$, and $\delta$
double points, we have 
\[
\delta+g=\frac{(d-1)(d-2)}{2}; 
\]
for this reason, the number of nodes $\delta$~is called the
\emph{cogenus} of a curve 
(be it irreducible or not).
It follows from Bezout's theorem that a degree~$d$ nodal curve of
cogenus $\delta\le d-2$ must be irreducible; hence  
\begin{equation}
\label{eq:severi=Ndg}
N^{d,\delta}=N_{d,\frac{(d-1)(d-2)}{2}-\delta} \ \ \text{for $d\ge \delta+2$.} 
\end{equation}
%In other words, starting with $d=\delta+2$, all curves counted by
%$N^{d,\delta}$ are irreducible. 
More generally, the Severi degrees can be recovered from the
Gromov-Witten numbers by the following well known (and easy to
justify) procedure. 
Fix a finite set $M$ of cardinality 
$\frac{d(d+3)}{2}-\delta$; 
we can think of $M$ as an indexing set for a point configuration. 
Split $M$ into an unordered disjoint union of
subsets $M=\bigcup_j M_j$; 
each such choice of a splitting corresponds to a distribution of
the points in a configuration among the irreducible components of a curve. 
Then pick integers $d_j>0$ (to serve as degrees of those components) 
and $\delta_j\ge 0$ (their cogenera) so that the following natural conditions
are satisfied:
\begin{align}
\label{eq:total-degree} &\sum_j d_j=d, \\
\label{eq:total-cogenus}&\sum_j \delta_j + \sum_{\{j,j'\}} d_j \, d_{j'}=\delta \\
\notag&\quad \text{(the second sum is over unordered
pairs of distinct indices $j$ and~$j'$),} \\
\label{eq:splitting-points}&\text{each $M_j$ has cardinality 
$|M_j|=\frac{d_j(d_j+3)}{2}-\delta_j$.} 
\end{align}
%(Conditions~\eqref{eq:total-degree} and~\eqref{eq:total-cogenus}...)
Using the notation $g_j=\frac{(d_j-1)(d_j-2)}{2}-\delta_j$, we then
have: 
\begin{equation}
\label{eq:severi-through-ndg}
N^{d,\delta}=\sum_{M=\cup M_j} \,\,\sum_{(d_j,\delta_j)}
\,\prod_j N_{d_j,g_j}\,,
\end{equation}
the sum over all unordered splittings $M=\cup M_j$ and all choices of $d_j$'s
and $\delta_j$'s satisfying
\eqref{eq:total-degree}--\eqref{eq:splitting-points}. 

\begin{example}[\emph{The Number of the Beast}]
The number of $4$-nodal plane quartics through 
10~generic points is $N^{4,4}=666$, computed as follows.  
Direct inspection shows that there are precisely two kinds of
splittings of a 10-element set~$M$ that work for $d=\delta=4$:
\begin{itemize}
\item
%those of the form 
$M\!=\!M_1\!\cup\! M_2$ with $|M_1|\!=\!2$, $d_1\!=\!1$, $\delta_1\!=\!0$, 
$g_1\!=\!0$, 
                     $|M_2|\!=\!8$, $d_2\!=\!3$, $\delta_2\!=\!1$, $g_2\!=\!0$\\
(a~line through 2 points and a rational cubic through 8 points); 
\item
$M\!=\!M_1\cup M_2$ with $|M_1|=|M_2|=5$, $d_1=d_2=2$,
  $\delta_1=\delta_2=0$, $g_1=g_2=0$\\
(two conics, each passing through 5 points in a configuration). 
\end{itemize}
This yields
$N^{4,4}=\binom{10}{2} N_{1,0} \,N_{3,0}+\frac12\binom{10}{5}N_{2,0}^2
=45\cdot 1\cdot 12+\frac12 \cdot 252 \cdot 1^2 = 666$.
\end{example}

Combining formula~\eqref{eq:severi-through-ndg} with
Theorem~\ref{th:bm-curves}, we obtain the following combinatorial rule. 

\begin{corollary}
\label{cor:severi-combin}
The Severi degree~$N^{d,\delta}$ is equal to 
\begin{equation}
\label{eq:severi-combin}
N^{d,\delta}=\sum_{M=\cup M_j} \,\,\sum_{(d_j,\delta_j,\Dcal_j)}
  \prod_j \mu(\Dcal_j)\,\nu(\Dcal_j),
\end{equation}
the sum over all unordered splittings $M=\cup M_j$ of the set
$\{1,2,\dots,\frac{d(d+3)}{2}-\delta\}$, 
all choices of integers $d_j>0$ and $\delta_j\ge 0$ satisfying
\eqref{eq:total-degree}--\eqref{eq:splitting-points}, 
and all collections of labeled floor diagrams~$\Dcal_j$, 
each of respective degree~$d_j$ and
genus~$g_j=\frac{(d_j-1)(d_j-2)}{2}-\delta_j$, 
and supported on the vertex set~$M_j$.
\end{corollary}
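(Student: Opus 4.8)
The plan is to obtain~\eqref{eq:severi-combin} by feeding the combinatorial rule of Theorem~\ref{th:bm-curves} into the geometric reduction formula~\eqref{eq:severi-through-ndg}; no new geometric or tropical input is required. Concretely, I would start from~\eqref{eq:severi-through-ndg}, which already expresses $N^{d,\delta}$ as a sum, over all unordered splittings $M=\cup M_j$ and all admissible choices of degrees $d_j$ and cogenera $\delta_j$ satisfying~\eqref{eq:total-degree}--\eqref{eq:splitting-points}, of the product $\prod_j N_{d_j,g_j}$, where $g_j=\frac{(d_j-1)(d_j-2)}{2}-\delta_j$.

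The key step is then to substitute Theorem~\ref{th:bm-curves} into each factor: for every~$j$ we replace $N_{d_j,g_j}$ by $\sum_{\Dcal_j}\mu(\Dcal_j)\,\nu(\Dcal_j)$, the sum ranging over (connected) labeled floor diagrams $\Dcal_j$ of degree~$d_j$ and genus~$g_j$, one such diagram accounting for one irreducible component. Expanding the resulting product of sums by distributivity replaces $\prod_j\bigl(\sum_{\Dcal_j}\mu(\Dcal_j)\nu(\Dcal_j)\bigr)$ by a single sum over all tuples $(\Dcal_j)$ of the product $\prod_j\mu(\Dcal_j)\nu(\Dcal_j)$, which is precisely the right-hand side of~\eqref{eq:severi-combin}.

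It remains to reconcile the bookkeeping of points. Here I would record the arithmetic identity $\frac{d_j(d_j+3)}{2}-\delta_j=3d_j+g_j-1$, valid for $g_j=\frac{(d_j-1)(d_j-2)}{2}-\delta_j$; combined with~\eqref{eq:splitting-points} this shows that each part $M_j$ has exactly $3d_j+g_j-1$ elements, matching both the number of points through which an irreducible curve counted by $N_{d_j,g_j}$ passes and the number of marking vertices of a degree-$d_j$, genus-$g_j$ marked floor diagram. This is the sense in which $\Dcal_j$ is ``supported on $M_j$'': its markings consume exactly the points of~$M_j$. Since $\nu(\Dcal_j)$ counts markings up to isomorphism fixing the floors, and is therefore independent of the actual labels carried by the points of $M_j$, the assignment of specific points to components enters only through the choice of the unordered splitting $M=\cup M_j$ itself.

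I do not anticipate a genuine obstacle: the statement is essentially a distributive reorganization of~\eqref{eq:severi-through-ndg}. The one point demanding care is the symmetry bookkeeping in the notion of an ``unordered splitting'' — for instance, the factor $\tfrac12$ arising when two components carry equal invariants, as in the two-conics contribution to $N^{4,4}$. I would make sure this symmetry is accounted for exactly once, by keeping the outer sum over splittings in~\eqref{eq:severi-combin} literally identical to the one already present in~\eqref{eq:severi-through-ndg}, so that no additional symmetry factors are introduced or lost in passing from Gromov--Witten numbers to floor diagrams.
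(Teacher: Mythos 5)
Your proposal is correct and takes essentially the same route as the paper: the paper obtains Corollary~\ref{cor:severi-combin} precisely by substituting Theorem~\ref{th:bm-curves} into the reduction formula~\eqref{eq:severi-through-ndg} and expanding the product, which is exactly your argument. The point-count identity $\frac{d_j(d_j+3)}{2}-\delta_j=3d_j+g_j-1$ and the care about unordered splittings that you make explicit are the details the paper leaves implicit.
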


To rephrase, the combinatorial rule for Severi degrees is the same as
for the Gromov-Witten numbers~$N_{d,g}$ except that one needs to drop
the condition that the labeled floor
diagrams involved be connected. 

%\include{diagrams-with-weighted-leaves}
%\section{From complex curves to tropical curves to floor diagrams}
\section{Tropical curves}
\label{sec:tropical-curves}

\newcommand{\R}{\mathbb R}
\newcommand{\Z}{\mathbb Z}
\newcommand{\C}{\mathbb C}
\newcommand{\N}{\mathbb N}
\newcommand{\Log}{\operatorname{Log}}

Let us review  the basic notions of tropical curves, both abstract
and parametrized; see \cite{IMS, mikhalkin-05, mikhalkin-06} for
further details. 

%We may fix a generic collection of $n$ points on $\R^2$
%so that any marking of a labeled diagram of genus $g$ and degree $d$
%will give us an irreducible projective tropical curve of degree $d$
%and genus $g$, see below (cf. \cite{mikhalkin-05}).

Throughout this section, $\bar C$ is a topological space
homeomorphic to a compact one-dimen\-sion\-al cell complex, 
i.e., a finite graph.
%Note that non-isomorphic graphs may represent isomorphic topological
%spaces; for example, any cycle graph represents~$S^1$. 
We will assume 
%(without loss of generality in the ensuing constructions) 
that the underlying graph of~$\bar C$ has no loops, 
no vertices of degree~$2$, and at least one vertex of degree~$\ge 3$
in each connected component. 

\begin{definition}[\emph{Valencies}] 
A small neighborhood of a point $c\in\bar C$ is
homeomorphic to a union of $k$ distinct rays in
an affine space emanating from the same origin. 
We~call~$k$ the \emph{valency} of~$c$;
accordingly, $c$~is called $k$-\emph{valent}.
All but finitely many points in~$\bar C$ are 2-valent. 
Let $C$ denote the subset of~$\bar C$ obtained by removing all the
(finitely many) points of valency~1 (the \emph{univalent} vertices). 
\end{definition}

\begin{definition}[\emph{Tropical curves}]
A {\em tropical structure} on~$C$ is an inner complete metric on~$C$. 
It can be described by specifying the lengths
of all the edges of the underlying graph; %(or one-dimensional cells); 
these lengths are $+\infty$ for the edges incident 
to the removed univalent vertices (the \emph{unbounded} edges), 
and are positive real numbers for the remaining (\emph{bounded}) edges. 
A space $C$ as above endowed with a tropical structure is called a
(non-parametrized, or abstract) {\em tropical curve}.
%with the points of considered {\em marked}.
Such a tropical curve is \emph{irreducible} if $C$ is connected.  
We call the first Betti number  of~$C$ the {\em genus} of a tropical
curve. 
\end{definition}

\begin{example}
\label{example:trop-curve}
Figure~\ref{fig:trop-curve}(a) shows a graph~$\bar C$ 
with 12~univalent and 12 trivalent vertices,
and with 12 bounded and 12 unbounded edges. 
Figure~\ref{fig:trop-curve}(b) shows an irreducible tropical curve
of genus $g=1$ obtained from~$\bar C$ by removing the univalent
vertices and assigning the lengths $\ell_1,\dots,\ell_{12}$ to its 
bounded edges.  
\end{example}

\begin{figure}[htbp]
\begin{center}
\setlength{\unitlength}{2.8pt}
\begin{picture}(70,48)(0,-5)
\thinlines
\put(30,-6){\makebox(0,0){(a)}}
\put(0,40){\line(1,0){20}}
\put(0,30){\line(1,0){40}}
\put(0,20){\line(1,0){50}}
\put(20,10){\line(1,0){50}}

\put(10,10){\line(0,1){10}}
\put(10,30){\line(0,1){10}}
\put(20,20){\line(0,1){10}}
\put(30,0){\line(0,1){10}}
\put(30,20){\line(0,1){10}}
\put(40,10){\line(0,1){10}}
\put(50,0){\line(0,1){10}}
\put(60,0){\line(0,1){10}}

\multiput(0,40)(10,0){3}{\circle{1}}
\multiput(0,30)(10,0){5}{\circle{1}}
\multiput(0,20)(10,0){6}{\circle{1}}
\multiput(10,10)(10,0){7}{\circle{1}}
\multiput(30,0)(20,0){2}{\circle{1}}
\multiput(60,0)(20,0){1}{\circle{1}}

\end{picture}
\qquad
\begin{picture}(70,48)(0,-5)
\put(30,-6){\makebox(0,0){(b)}}
\thinlines
\put(0,40){\line(1,0){20}}
\put(0,30){\line(1,0){40}}
\put(0,20){\line(1,0){50}}
\put(20,10){\line(1,0){50}}

\put(10,10){\line(0,1){10}}
\put(10,30){\line(0,1){10}}
\put(20,20){\line(0,1){10}}
\put(30,0){\line(0,1){10}}
\put(30,20){\line(0,1){10}}
\put(40,10){\line(0,1){10}}
\put(50,0){\line(0,1){10}}
\put(60,0){\line(0,1){10}}

%\thicklines
%\linethickness{1.5pt}
\put(10,30){\line(1,0){20}}
\put(10,20){\line(1,0){30}}
\put(30,10){\line(1,0){30}}

\put(10,30){\line(0,1){10}}
\put(20,20){\line(0,1){10}}
\put(30,20){\line(0,1){10}}
\put(40,10){\line(0,1){10}}

\multiput(10,40)(10,0){1}{\circle{1}}
\multiput(10,30)(10,0){3}{\circle{1}}
\multiput(10,20)(10,0){4}{\circle{1}}
\multiput(30,10)(10,0){4}{\circle{1}}

\put(8,35){\makebox(0,0){$\scriptstyle\ell_1$}}
\put(15,32){\makebox(0,0){$\scriptstyle\ell_2$}}
\put(15,22){\makebox(0,0){$\scriptstyle\ell_3$}}
\put(18,25){\makebox(0,0){$\scriptstyle\ell_4$}}
\put(25,32){\makebox(0,0){$\scriptstyle\ell_5$}}
\put(25,22){\makebox(0,0){$\scriptstyle\ell_6$}}
\put(28,25){\makebox(0,0){$\scriptstyle\ell_7$}}
\put(35,22){\makebox(0,0){$\scriptstyle\ell_8$}}
\put(35,12){\makebox(0,0){$\scriptstyle\ell_9$}}
\put(37.5,15.5){\makebox(0,0){$\scriptstyle\ell_{10}$}}
\put(45,12){\makebox(0,0){$\scriptstyle\ell_{11}$}}
\put(55,12){\makebox(0,0){$\scriptstyle\ell_{12}$}}

\end{picture}
\end{center}
\caption{A graph and a related tropical curve}
\label{fig:trop-curve}
\end{figure}

\begin{figure}[htbp]
\begin{center}
\setlength{\unitlength}{2pt}
\begin{picture}(70,80)(0,0)
\thinlines
\put(0,30){\line(1,0){10}}
\put(0,60){\line(1,0){10}}
\put(0,70){\line(1,0){10}}
\put(0,10){\line(1,0){30}}
\put(10,70){\line(1,1){10}}
\put(30,50){\line(1,1){30}}
\put(40,30){\line(1,1){30}}
\put(60,10){\line(1,1){10}}
\put(10,60){\line(1,-1){10}}
\put(30,40){\line(1,-1){10}}
\put(40,20){\line(1,-1){10}}
\put(20,40){\line(1,0){10}}
\put(20,50){\line(1,0){10}}
\put(50,10){\line(1,0){10}}
\put(10,30){\line(1,1){10}}
\put(30,10){\line(1,1){10}}
%\linethickness{1.5pt}
\put(10,60){\line(0,1){10}}
\put(20,40){\line(0,1){10}}
\put(30,40){\line(0,1){10}}
\put(40,20){\line(0,1){10}}
\put(10,0){\line(0,1){30}}
\put(30,0){\line(0,1){10}}
\put(50,0){\line(0,1){10}}
\put(60,0){\line(0,1){10}}

\iffalse
%\put(0,30){\circle*{1.5}}
%\put(0,60){\circle*{1.5}}
%\put(0,70){\circle*{1.5}}
%\put(10,20){\circle*{1.5}}
\put(10,30){\circle*{1.5}}
\put(10,60){\circle*{1.5}}
\put(10,70){\circle*{1.5}}
%\put(20,10){\circle*{1.5}}
\put(20,40){\circle*{1.5}}
\put(20,50){\circle*{1.5}}
%\put(20,80){\circle*{1.5}}
%\put(30,0){\circle*{1.5}}
\put(30,10){\circle*{1.5}}
\put(30,40){\circle*{1.5}}
\put(30,50){\circle*{1.5}}
\put(40,20){\circle*{1.5}}
\put(40,30){\circle*{1.5}}
%\put(40,60){\circle*{1.5}}
%\put(50,0){\circle*{1.5}}
\put(50,10){\circle*{1.5}}
%\put(50,40){\circle*{1.5}}
%\put(60,20){\circle*{1.5}}
\fi

\put(38,25){\makebox(0,0){$\scriptstyle\mathbf{2}$}}
\end{picture}
\end{center}
\caption{The image of a plane tropical curve}
\label{fig:plane-trop-curve}
\end{figure}

\begin{definition}[\emph{Tropical morphisms and plane tropical
      curves}] 
\label{def:trop-morphism}
A map $h:C\to\R^n$ is called a \emph{tropical morphism} if it
satisfies the following properties:
\begin{itemize}
\item
$h$ is \emph{affine} along each edge~$e$ in~$C$. 
To be precise, let $a\in C$ be an endpoint of~$e$
(thus $a$~is not univalent);  
then there is a vector $\Delta_a(e)\!\in\!\RR^n$ such that the restriction
of $h$ to $e$ is given~by 
\[
h(c)=h(a)+\ell(a,c)\,\Delta_a(e);  
\]
here $\ell(a,c)$ denotes the length of the segment $[a,c]$ of the
edge~$e$. 
\item
the vectors $\Delta_a(e)$ have integer coordinates; 
\item
for a fixed vertex~$a$, the vectors $\Delta_a(e)$ satisfy the
\emph{balancing condition} (cf.~\cite{mikhalkin-06}) 
\begin{equation}
\label{eq:balancing}
\sum_e \Delta_a(e)=0, 
\end{equation}
where the sum is taken over all edges $e$ adjacent to~$a$. 
\end{itemize}
From now on we focus on the case~$n\!=\!2$.
A morphism from a tropical curve $C$ to~$\R^2$ is called a 
\emph{(parametrized) plane tropical curve}, or
a tropical curve in~$\R^2$.

Such a curve assigns positive integer \emph{weights} to the
edges in~$C$, as follows. 
The weight $w(e)$ of an edge $e$ is the greatest common divisor of the
coordinates of an integer vector~$\Delta_a(e)$. 
In view of~\eqref{eq:opposite-vectors} below, 
this does not depend on the choice of~$a$. 
\end{definition}

The sets $h(C)\subset\R^2$ obtained as the images of tropical
morphisms were originally introduced by Aharony, Hanany, and
Kol~\cite{aharony-hanany-kol} under the name of {\em $(p,q)$-webs}. 

\begin{example}[cf.\ Example~\ref{example:trop-curve}]
\label{example:plane-trop-curve}
Figure~\ref{fig:plane-trop-curve} shows the image of a particular
morphism (a plane tropical curve) $h:C\to\RR^2$ where $C$ is 
the tropical curve from Figure~\ref{fig:trop-curve}. 
The weights of all edges are~1 except for one edge  of weight~2 (the edge
whose length is~$\ell_{10}$). 
The integer vectors $\Delta_a(e)$ can be read off the picture as follows:
$\Delta_a(e)$ is $w(e)$ times the primitive vector of the segment or
ray representing~$e$,  
pointing away from the image of~$a$. 
Thus in this example, the values taken by $\Delta_a(e)$ are 
$(0,\pm1)$, $(\pm1,0)$, $(\pm1,\pm1)$, and $(0,\pm2)$. 
\end{example}

If $e$ is a \emph{bounded} edge, i.e., an edge connecting two vertices $a$
and $b$ in~$C$, then %we obviuosly have 
\begin{equation}
\label{eq:opposite-vectors}
\Delta_a(e)+\Delta_b(e)=0. 
\end{equation}
If $e$ is \emph{unbounded}, then it has a unique endpoint $a\in C$;
consequently, the notation $\Delta(e)=\Delta_a(e)$
is unambiguous. 
Let $E^\circ$ denote the set of all unbounded edges. 
It follows from \eqref{eq:balancing} and~\eqref{eq:opposite-vectors}
that 
\[
\sum_{e\in E^\circ} \Delta(e)=0. 
\]

\begin{definition}[\emph{Degree of a plane tropical curve}] 
For a vector $v\!=\!(p,q)\in\ZZ^2$, set 
\[
\langle v\rangle=
\max(p,q,0). 
\]
The \emph{(projective) degree} of a plane tropical curve $h:C\to\RR^2$
is defined by 
\begin{equation}
\label{eq:deg-h}
\deg h=
%\sum\limits_{j\ :\ q_j<0} |q_j| + \sum\limits_{j\ : p_j>0, q_j=0}
%p_j.
\sum_{e\in E^\circ} \langle\Delta(e)\rangle.
\end{equation}
Thus the degree depends on the map~$h$---unlike the genus, 
which only depends on the curve~$C$.
\end{definition}

To illustrate, the plane curve in
Example~\ref{example:plane-trop-curve} has degree $d=4$. 

% SF: please check
\begin{remark}
\label{rem:deg-simple}
For all curves $h$ to be considered in this paper (say of degree~$d$), 
the collection of vectors $\{\Delta(e)\}_{e\in E^\circ}$ 
consists of $d$ copies of each of the three vectors $(-1,0)$,
$(0,-1)$, and~$(1,1)$,  
so that formula \eqref{eq:deg-h} yields $\deg h=d$. 
\end{remark}

%\subsection{Floor diagram of a plane tropical curve}

\section{The combinatorial correspondence theorems}
\label{sec:combin-corresp-thms}

\subsection{Combinatorial rule for the ordinary Gromov-Witten
  invariants}
\label{sec:combin-corresp-thm-ordinary}
The correspondence theorem of Tropical Geometry~\cite{mikhalkin-05} 
reduces the problems of counting plane complex curves with prescribed 
properties to the appropriate tropical versions of the same problems,
that is, to (weighted) enumeration of certain plane tropical curves. 
In this section, we describe a setting that leads to 
a bijection between such tropical curves
and some purely combinatorial objects, namely marked floor diagrams of
Section~\ref{sec:labeled-floor-diagrams}. 
Combining the two constructions, we then obtain a ``combinatorial
correspondence theorem'' that allows one to answer 
questions in enumerative geometry of the complex plane in 
direct combinatorial terms.

\begin{definition}[\emph{Elevators}] 
\label{def:elevators}
Let $h:C\to\RR^2$ be a plane tropical curve. 
An edge $e$ in~$C$ (either bounded or unbounded) 
is called an \emph{elevator} of~$h$ 
if the image $h(e)\subset\RR^2$ is \emph{vertical}, 
i.e., if the vector(s) $\Delta_a(e)$ are nonzero and parallel to $(0,1)$.
The two possible orientations of an elevator~$e$ are naturally called
\emph{up} and~\emph{down}. 
\end{definition}

\begin{definition}[\emph{Floor diagram of a plane tropical curve}]
A {\em floor} of a plane tropical curve $h:C\to\R^2$ 
is a connected component (in~$C$) 
of the union of all edges which are not elevators. 
The \emph{floor diagram} of $h$ is an oriented weighted
graph $\Dcal(h)$ obtained from $C$ by removing 
(the interiors of) all unbounded edges, 
collapsing each floor to a single vertex,
orienting all remaining edges 
(which correspond to bounded elevators) downwards,
and keeping their weights. 
Thus, the vertices of $\Dcal(h)$ correspond to the floors, and
the edges to the bounded elevators (directed downwards). 
\end{definition}

\begin{example}[cf.\ Examples~\ref{example:trop-curve}
and~\ref{example:plane-trop-curve}] 
The plane tropical curve $h:C\to\RR^2$ whose image is shown
in Figure~\ref{fig:plane-trop-curve} has 8 elevators: 
4~bounded and 4~unbounded. 
They are precisely the 8 vertical edges in
Figure~\ref{fig:trop-curve}(b), which are represented by the
8~vertical segments and rays in 
Figure~\ref{fig:trop-curve}(a). 

The 4 floors of $h$ 
are formed by the horizontal edges of $C$ as shown in 
Figure~\ref{fig:trop-curve}(b). 
The floor diagram $\Dcal(h)$ is obtained by removing the unbounded
edges, contracting each floor to a point, directing the 4 remaining
edges downwards, and assigning weights 1,1,1,2 to them. 
The result is a diagram isomorphic to the one shown
in~\eqref{eq:fd4}. 
%Definition~\ref{def-marking}
\end{example}

%\begin{remark}
In general, a floor diagram of a plane tropical curve may not admit
a labeling satisfying the conditions in Definition~\ref{def:lfd}, 
so it might not correspond to any labeled floor diagram in the sense
of that definition. In particular, both the acyclicity condition and 
the divergence condition~\eqref{eq:div} cannot be guaranteed.
%\end{remark}
It turns out however that the floor diagrams of plane curves 
passing through point configurations of certain kind 
always satisfy the requisite conditions, as we explain next.

\begin{definition}[\emph{Vertically stretched configurations}]
\label{def:vert-stretched}
A $(3d-1+g)$-element set 
\[
\Pcal=\{(x_1,y_1),(x_2,y_2),\dots,(x_{3d-1+g},y_{3d-1+g})\}
\subset\RR^2
\]
is called a \emph{vertically stretched $(d,g)$-configuration} 
if 
\begin{equation}
\label{eq:vert-stretched}
\begin{array}{l}
x_1<x_2<\cdots<x_{3d-1+g}\,,\\[.05in]
y_1<y_2<\cdots<y_{3d-1+g}\,,\\[.05in]
\displaystyle\min_{i\neq j}|y_i-y_j| > (d^3+d)\cdot \max_{i\neq j}|x_i-x_j|. 
\end{array}
\end{equation}
Such a configuration $\Pcal$ comes equipped with a
``downwards'' linear order in which higher points precede the lower
ones. 
\end{definition}

\begin{remark}
The conditions in Definition~\ref{def:vert-stretched} can be relaxed
by removing the second string of inequalities, 
changing the coefficient $(d^3+d)$, and/or removing the $\min$ and
the~$\max$.  
We do not make an attempt to determine the weakest possible conditions
that ensure the desired properties of associated tropical curves; 
instead, we chose the version that
makes subsequent arguments as simple as possible. 
\end{remark}

We say that a plane tropical curve \emph{passes through} a
configuration~$\Pcal\subset\RR^2$ if the image of the curve
contains~$\Pcal$, i.e., $h(C)\supset\Pcal$. 

The following key lemma is a restatement of a result that can be extracted
from \cite[Section~5]{brugalle-mikhalkin08}. 

\pagebreak[3]

\begin{lemma}
\label{lem:floor-stretched}
Let $\Pcal$ be a %generic 
vertically stretched $(d,g)$-configuration, 
and $h$ an irreducible plane tropical curve of degree~$d$ and genus~$g$
passing through~$\Pcal$. 
Then:
\begin{itemize}
\item
each floor of $h$ contains a unique point in~$\Pcal$;
\item
the linear ordering of the floors induced
from~$\Pcal$ makes $\Dcal(h)$ a labeled floor diagram of
degree~$d$ and genus~$g$ (cf.\ Definition~\ref{def:lfd});
\item
each elevator of $h$ contains a unique point in $\Pcal$;
\item 
the linear ordering of the floors and elevators induced
from~$\Pcal$ produces a marking of~$\Dcal(h)$, 
denoted by $\tDcal(h,\Pcal)$ 
(cf.\ Definition~\ref{def-marking}).
\end{itemize}
\end{lemma}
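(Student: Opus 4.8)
The plan is to exploit the vertical stretching~\eqref{eq:vert-stretched} to force the image $h(C)$ to separate into horizontally confined \emph{floors}, sitting at widely separated heights and joined by long vertical \emph{elevators}, and then to deduce all four assertions from this separation together with an accounting of points against floors and elevators. First I would record what genericity of $\Pcal$ provides: that $h$ is simple (trivalent), that no point of $\Pcal$ sits at a vertex of $h$, and that, by Remark~\ref{rem:deg-simple}, the unbounded edges are $d$ copies each of $(-1,0)$, $(0,-1)$, $(1,1)$. Since an edge of a degree-$d$ tropical curve is dual to an edge of a lattice subdivision of $\mathrm{conv}\{(0,0),(d,0),(0,d)\}$, every non-elevator edge has primitive direction $(p,q)$ with $1\le|p|$ and $|q|\le d$; hence along any non-elevator edge the vertical displacement is at most $d$ times the horizontal displacement.

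The heart of the argument, and the step I expect to be the main obstacle, is the estimate that any two points of $\Pcal$ lying on non-elevator edges of a single floor differ in height by at most $(d^3+d)\max_{i\ne j}|x_i-x_j|$. The two points are joined by a unique reduced path inside the floor; along it the vertical displacement is at most $d$ times the horizontal displacement by the slope bound above, so it suffices to bound the total horizontal displacement of the path by $(d^2+1)\max_{i\ne j}|x_i-x_j|$. This in turn amounts to showing that the vertices of $h$ relevant to a floor, and the lengths of its bounded edges, stay within a horizontal window of width polynomial in $d$ around the configuration --- the one place where the point constraints and the vertical stretching genuinely interact, and the fact imported from the geometry of floor-decomposed curves. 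Granting the estimate, the bound lies strictly below the minimal vertical gap $\min_{i\ne j}|y_i-y_j|$, so no floor can carry two points of $\Pcal$; and since $x_1<\cdots<x_{3d-1+g}$ are pairwise distinct, no vertical edge can carry two. Hence each floor and each elevator contains at most one point of~$\Pcal$.

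To promote ``at most one'' to ``exactly one'' I would pin down the combinatorial type of the decomposition. Balancing the horizontal flux of a floor forces it to carry equally many unbounded $(1,1)$- and $(-1,0)$-edges; genericity excludes floors carrying none, and since there are $d$ of each overall, every floor is a tropical line bearing exactly one of each, so there are exactly $d$ floors. As tropical lines are trees, contracting the floors preserves the first Betti number, whence $\Dcal(h)$ has genus $g$ and thus $d+g-1$ edges, i.e.\ $d+g-1$ bounded elevators; with the $d$ unbounded down-rays this gives $2d+g-1$ elevators in all. Balancing the vertical flux at a contracted floor yields $\divv(v)=1-r_F$, where $r_F\ge 0$ counts the unbounded down-rays attached to that floor, so in particular $\divv(v)\le 1$ as in~\eqref{eq:div}. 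Finally, floors and elevators form $d+(2d+g-1)=3d-1+g$ receptacles, each holding at most one of the $3d-1+g$ points of $\Pcal$; hence each holds exactly one, which is the first and third bullets.

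It remains to read off the ordering. The downward order on $\Pcal$ (higher points first) induces a linear order on the floors in which every elevator runs from a higher floor to a lower one; hence $\Dcal(h)$ is acyclic with all edges directed from smaller to larger vertices, and with the divergence bound it is a labeled floor diagram of degree~$d$ and genus~$g$ in the sense of Definition~\ref{def:lfd} --- the second bullet. Inserting the elevator-points into this order and checking that the resulting extension of the order on $V$ is compatible --- each bounded elevator being split by its point exactly as in Step~2, each unbounded down-ray recording a new vertex as in Step~1 --- reproduces Steps~1--3 of Definition~\ref{def-marking}, so the induced order is a marking $\tDcal(h,\Pcal)$, the fourth bullet. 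Beyond the vertical-extent estimate, the genericity inputs --- simplicity of $h$, the tropical-line structure of floors, and the fact that every floor and elevator is actually used by a point --- are the remaining delicate points, and they are exactly where the equality between the point count $3d-1+g$ and the expected dimension does its work.
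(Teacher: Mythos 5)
A preliminary remark: the paper offers no proof of Lemma~\ref{lem:floor-stretched} to compare against --- it is stated as ``a restatement of a result that can be extracted from \cite[Section~5]{brugalle-mikhalkin08}'' --- so your attempt has to stand on its own as a complete argument. It does not, and the gap is the one you flag yourself. The slope bound for non-elevator edges is correct, and it does reduce ``at most one point per floor'' to bounding the \emph{total horizontal variation} of a path joining two points of $\Pcal$ inside one floor. But you then grant that bound, justifying it as ``the fact imported from the geometry of floor-decomposed curves.'' That is circular: horizontal confinement of floors near the configuration is not a property of arbitrary tropical curves through $\Pcal$ --- a priori a connected union of non-vertical edges can wander far to either side of the configuration, with large cancelling horizontal displacements, so that two points at vastly different heights sit on it comfortably --- it is precisely the assertion that $h$ is floor-decomposed, i.e.\ the substance of the lemma. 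In \cite{brugalle-mikhalkin08} this is where the real work happens, and it uses an input entirely absent from your proposal: the rigidity coming from the tight dimension count (the space of simple curves of degree~$d$ and genus~$g$ has dimension $3d-1+g$, equal to~$|\Pcal|$), which forces each point of $\Pcal$ onto its own edge and forces each component of the complement of the marked points in~$C$ to carry exactly one unbounded end; only then does the stretching inequality \eqref{eq:vert-stretched} force every edge to be either vertical or part of a horizontally confined floor. Without some such argument, your central estimate is an assumption equivalent to the conclusion.

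There is also a logical inversion in the second half, though that part is repairable. From horizontal flux balance you may conclude only that every floor carries equally many, and at least one pair of, unbounded $(1,1)$- and $(-1,0)$-edges (incidentally this follows from balancing alone, not genericity: a horizontally bounded floor would violate balancing at its leftmost vertex), hence that there are \emph{at most} $d$ floors; it does not follow that ``every floor is a tropical line bearing exactly one of each, so there are exactly $d$ floors,'' since two floors could carry $1$ and $d-1$ pairs respectively. Yet your subsequent tally of $2d+g-1$ elevators uses both this and the tree property of floors, neither of which is available at that stage; similarly, counting $d$ unbounded elevators presumes every down-ray has weight~$1$. The correct bookkeeping runs all of these as inequalities --- at most $d$ floors, $b_1(\Dcal(h))\le g$ and hence at most $d+g-1$ bounded elevators, at most $d$ down-rays --- giving at most $3d-1+g$ receptacles, and then lets the $3d-1+g$ points of $\Pcal$ force equality throughout, which simultaneously yields exactly $d$ contractible floors, weight-one down-rays, and exactly one point per floor and per elevator.
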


To amplify, such a curve $h$ has exactly $d$ floors,
so that $\Dcal(h)$ has $d$ vertices; 
each floor is contractible, so $\Dcal(h)$, like~$h$, has genus~$g$;
the graph $\Dcal(h)$ is acyclic; and it satisfies the divergence
condition.
 
% SF: Grisha, please check this
Thus, each of the $d$ floors of~$h$ is a graph of a
continuous piecewise-linear function, with slopes at the left and
right ends equal to $0$ and~$1$, respectively. 
See Figure~\ref{fig:floor-ddiag-of-trop-curve}. 

\begin{figure}[htbp]
\begin{center}
\setlength{\unitlength}{0.8pt}
\begin{picture}(140,220)(5,0)
\thicklines

%\linethickness{1.5pt}
\darkred{\linethickness{1pt}
\put(70,150){\line(0,1){40}}
}
\lightgreen{\linethickness{1pt}
\put(10,0){\line(0,1){50}}
\put(30,0){\line(0,1){70}}
}

\put(0,50){\line(1,0){10}}
\put(0,190){\line(1,0){70}}

\put(10,50){\line(1,1){20}}
\put(70,150){\line(1,1){30}}
\put(70,190){\line(1,1){18.6}}

\put(100,220){\line(-1,-1){8.6}}

\put(30,70){\line(1,2){19.1}}
\put(70,150){\line(-1,-2){19.1}}

\put(10,10){\circle*{4}}
\put(30,60){\circle*{4}}
\put(50,110){\circle{4}}
\put(70,160){\circle*{4}}
\put(90,210){\circle{4}}

\put(0,110){\makebox(0,0){$h(C)$}}

\darkred{\linethickness{1pt}
\put(130,158){\line(0,-1){46}}
\put(130,158){\vector(0,-1){27}}
\put(130,208){\line(0,-1){46}}
\put(130,208){\vector(0,-1){27}}
}

\lightgreen{\linethickness{1pt}
\put(130,108){\line(0,-1){46}}
\put(130,108){\vector(0,-1){27}}
\qbezier(131.2,11.6)(160,60)(131,108.4)
\put(146,56){\vector(0,-1){1}}
}

\put(130,10){\circle*{4}}
\put(130,60){\circle*{4}}
\put(130,110){\circle{4}}
\put(130,160){\circle*{4}}
\put(130,210){\circle{4}}

\put(155,110){{$\tDcal(h,\Pcal)$}}
\end{picture}
\end{center}
\caption{Marked floor diagram of a tropical curve}
\label{fig:floor-ddiag-of-trop-curve}
\end{figure}

\medskip

In fact, much more is true. 
%Let $\tDcal(h,\Pcal)$ denote the marked floor diagram obtained by
%marking the floors and elevators using the linear order on~$\Pcal$, as
%in Lemma~\ref{lem:floor-stretched}. 

\pagebreak[3]

\begin{theorem}
\label{th:trop=combin}
Let $\Pcal$ be a %generic 
vertically stretched $(d,g)$-configuration. 
Then the mapping $h\!\mapsto\!\tDcal(h,\Pcal)$ establishes a bijective
correspondence between irreducible plane tropical curves of degree~$d$ and
genus~$g$ passing through~$\Pcal$ and 
the marked floor diagrams of  degree~$d$ and genus~$g$. 
\end{theorem}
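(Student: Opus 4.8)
The plan is to show that the forward map $h\mapsto\tDcal(h,\Pcal)$ is a bijection by exhibiting an explicit inverse. By Lemma~\ref{lem:floor-stretched} the map is already well defined and lands in the set of marked floor diagrams of degree~$d$ and genus~$g$; so it remains to reconstruct $h$ from the pair $(\tDcal,\Pcal)$ and to verify that the two constructions are mutually inverse.

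First I would set up the reconstruction. Identify the linearly ordered vertex set $\widetilde V=\{1,\dots,3d+g-1\}$ of the marking $\tDcal$ with the downward-ordered points of $\Pcal$, so that each vertex carries a distinct point of $\Pcal$. The $d$ original (floor) vertices single out $d$ \emph{floor points}, and the remaining $2d+g-1$ vertices single out \emph{elevator points}, one lying on each elevator of~$h$. The key rigidity observation is that $h$ is then entirely forced. Every elevator is a vertical segment or ray, hence has constant $x$-coordinate, which must equal the $x$-coordinate of the unique elevator point it carries; and every floor is the graph of a continuous piecewise-linear function whose breakpoints occur exactly at the $x$-coordinates of the elevators attached to it. By Remark~\ref{rem:deg-simple} this function has slope $0$ on its leftmost (horizontal) unbounded edge and slope $1$ on its rightmost unbounded edge in direction $(1,1)$, and it jumps at each breakpoint by the weight of the attaching elevator, the sign being dictated by the balancing condition~\eqref{eq:balancing} and the orientation recorded in~$\Dcal$. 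Thus each floor is prescribed by the diagram up to a vertical translation, and that translation is pinned down by requiring the floor to pass through its floor point; the length of each bounded elevator is then the vertical gap between the two floors it joins. This yields a unique candidate morphism~$h$.

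Next I would prove surjectivity by checking that this candidate is a genuine plane tropical curve of degree~$d$ and genus~$g$ passing through all of~$\Pcal$. The combinatorial data guarantees integrality of the vectors $\Delta_a(e)$, the balancing condition at every vertex, and (via the first Betti number) genus~$g$; the substantive point is positivity of all bounded edge lengths and correct incidence of elevators with floors. Here the inequalities~\eqref{eq:vert-stretched} do the work: since each floor has slopes between $0$ and~$1$ over an $x$-range bounded by $\max_{i\neq j}|x_i-x_j|$, its total vertical variation is negligible compared with the inter-point vertical gaps, which exceed $(d^3+d)\max_{i\neq j}|x_i-x_j|$. Consequently each floor stays within a thin horizontal band around the height of its floor point, distinct floors occupy disjoint bands in the order prescribed by~$\Pcal$, and every elevator joining two floors has large positive length and meets its lower floor at the correct $x$-coordinate. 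The monotonicity $x_1<\cdots<x_{3d-1+g}$, together with the compatibility of the marking's order with that of~$V$, ensures that the breakpoints along each floor occur in increasing $x$-order, so the reconstructed floor is a bona fide function graph carrying its elevator points. Hence $h$ exists and maps to the prescribed~$\tDcal$.

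Finally, injectivity follows from the same rigidity: if $h$ is any irreducible plane tropical curve through~$\Pcal$ with $\tDcal(h,\Pcal)=\tDcal$, then by Lemma~\ref{lem:floor-stretched} its floors and elevators carry exactly the points dictated by the marking, so its elevator $x$-coordinates, its floor heights, and its slope data all coincide with those of the reconstructed curve; therefore $h$ equals that curve, and the two maps are mutually inverse. I expect the main obstacle to be precisely this rigidity/consistency step -- establishing that the vertically stretched bounds are strong enough to force the floors into disjoint horizontal bands in the correct order and to guarantee positive elevator lengths and correct incidences, thereby ruling out any alternative tropical curve sharing the same combinatorial data.
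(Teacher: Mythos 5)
Your proposal is correct and takes essentially the same approach as the paper's proof: the paper also proves the theorem by ``reverse engineering'' the unique curve from $(\tDcal,\Pcal)$ -- determining the slopes of each floor's segments from the signed weight data of $\tDcal$, the breakpoints from the vertical lines through the elevator points, the vertical shift from the floor point, and then invoking the vertically stretched condition (slopes bounded by $d$ versus the large vertical gaps) to guarantee that the floors fit into disjoint horizontal bands so that the construction is valid and forced. Your decomposition into well-definedness, surjectivity, and injectivity is just a slightly more explicit packaging of the paper's existence-and-uniqueness statement.
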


\begin{proof}
Let $\tDcal$ be a marking of a labeled floor diagram $\Dcal$ of degree
$d$ and genus~$g$. % by the linearly ordered set~$\Pcal$. 
We need to show that $\tDcal=\tDcal(h,\Pcal)$ for a unique 
plane tropical curve~$h$ of the same degree and genus passing
through~$\Pcal$.
To do that, let us describe the structure of such a curve in concrete
detail. Its construction from a given marked floor diagram~$\tDcal$
and a point configuration~$\Pcal$ will then proceed by ``reverse
engineering.'' 

Since the vertex set of $\tDcal$ and the configuration~$\Pcal$ are
linearly ordered 
sets of the same cardinality, there is a canonical order-preserving
bijection $v\mapsto p(v)$ between them. 
It will be convenient to distinguish between ``white'' and ``black''
points \hbox{$p(v)\in\Pcal$,} 
depending on whether $v$ comes from~$\Dcal$ or not. 
Each white vertex lies on the respective floor, and each black
vertex lies on the corresponding elevator. 

As we scan a floor~$F_v$ passing through a white vertex~$p(v)$ 
from its right end all the way to the left, 
the slope of the floor changes each time it meets an elevator.
Specifically, the process unfolds as follows.
The initial slope at the right end is~1. 
A elevator~$e_u$ arriving from above  
(respectively, from below)
and passing through a black vertex~$p(u)$ 
increases (respectively, decreases) the slope by~$w(e_u)$. 
Due to the way the points in~$\Pcal$ are placed, 
the order in which those elevators hit the floor~$F_v$ as we scan
it right to left is precisely the (top-down) order in which the
corresponding vertices~$u$ 
(that is, all the black vertices connected to~$v$) 
appear in~$\tDcal$. 
Consequently, the slope of a segment $S_u$ of~$F_v$ 
that is bounded on the right by an elevator~$e_u$ is uniquely
determined by the combinatorics of~$\tDcal$: 
it is equal to $1$ plus a signed sum of weights of all edges
in~$\tDcal$ connecting~$v$ with vertices $\le u$;
the edges arriving at~$v$ contribute with a positive sign while the
edges leaving~$v$, with a negative one. 
Furthermore, the right endpoint of~$S_u$ lies on the vertical line
passing through~$p(u)$ while the left endpoint lies on the vertical
line passing 
through~$p(u')$, where $u'$ is the vertex in~$\tDcal$ immediately
following~$u$ in the ordered list of vertices connected to~$v$. 
To~summarize, the combinatorics of~$\tDcal$ determines the slopes of
segments making up the floor~$F_v$ while the vertical lines passing through the
points~$p(u)$, for all $u$ connected to~$v$, determine the
$x$-coordinates of the breakpoints on~$F_v$. 
This defines~$F_v$ up to a vertical shift;
the latter is determined by the condition that $p(v)\in F_v$. 

The recipe for constructing a (necessarily unique)
tropical curve~$h$ with the desired
properties is now clear: each floor $F_v$ is described by the above
rule, and the vertical elevators are then drawn through the black
points, bounded by the appropriate floors 
(or going all the way down in case of
unbounded elevators). 
Since the slopes of floor segments cannot exceed~$d$,
condition~\eqref{eq:vert-stretched} guarantees that each floor~$F_v$ 
constructed in this way will fit below (respectively above) all black
vertices~$p(u)$ for $u\le v$ (resp.,~$v\le u$), ensuring that the
recipe works. 
It is also clear that the curve~$h$ constructed in this way will be
irreducible, and will have
the required genus and degree (for the latter, cf.\
Remark~\ref{rem:deg-simple}). 
\end{proof}

Theorem~\ref{th:trop=combin} is illustrated in
%Figure~\ref{fig:trop-cubics}, 
Appendix~B, 
which shows the 9~tropical rational
cubics passing through a vertically stretched $(3,0)$-configuration of
8~points, alongside their respective marked floor diagrams. 

\medskip

%\newpage

Theorem~\ref{th:geom-correspondence} below is a special case of the 
(geometric) ``correspondence theorem'' \cite[Theorem~1]{mikhalkin-05}.

\iffalse
The \emph{(complex) multiplicity} of a plane tropical curve 
$h:C\to\R^2$ %with simple floor structure
we define its complex multiplicity as $\mu_{\C}=\prod_E w^2(E)$
where $E$ runs over all elevators between the floors and $w(E)$ denotes
the weight of such elevator.
We define its real multiplicity ${\mu_{\R}}$ to be 1 if all the elevators
have odd weight and to be 0 otherwise.
\fi

For a positive real number~$t$,
let $\Log_t:(\C^\times)^2\to\R^2$ denote the map defined by 
\begin{equation}
\label{eq:Log-t}
\Log_t(z,w)=(\log_t|z|,\log_t|w|).
\end{equation}

\begin{theorem}
\label{th:geom-correspondence}
Let $\Pcal$ be a vertically stretched $(d,g)$-configuration. 
Let $t$ be a sufficiently large positive number. 
Then for any configuration ${\mathcal
  P^{\C}}\subset(\C^\times)^2$ of $3g-1+d$ points such that 
$\Log_t({\mathcal P^{\C}})=\Pcal$, there is a 
canonical surjective ``tropicalization'' 
map 
\[
\gamma\mapsto \operatorname{Trop}_{\Pcal^\CC,t}(\gamma)
\]
from the set of irreducible complex algebraic curves $\gamma$ 
of degree $d$ and genus~$g$ passing through ${\mathcal P^{\C}}$
to the set of irreducible plane tropical curves $h$ of degree $d$ and
genus $g$ passing through~$\Pcal$. 
Under this map, the preimage
  $\operatorname{Trop}_{\Pcal^\CC,t}^{-1}(h)$ 
of each such curve $h$ consists of 
$%\mu_{\C}(h)=
\mu(\Dcal(h))$
distinct complex curves.
%Furthermore, there exists a positive real valued function
%$\epsilon(t)\to 0$, $t\to+\infty$ 
%such that the $\Log_t$-amoeba of a complex algebraic curve in our set
%is contained in 
%an $\epsilon(t)$-neighborhood of the image of the corresponding
%tropical morphism in $\R^2$. 
\end{theorem}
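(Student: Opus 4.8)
The plan is to deduce the statement directly from the general correspondence theorem \cite[Theorem~1]{mikhalkin-05}, so that the only genuinely new work is (a)~checking that our hypotheses place us in the regime covered by that theorem, and (b)~identifying the tropical multiplicity appearing there with the combinatorial multiplicity $\mu(\Dcal(h))$.

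First I would recall the content of \cite[Theorem~1]{mikhalkin-05}: for a configuration of $3d+g-1$ points in tropically general position and $t$ large, the complex algebraic curves of degree~$d$ and genus~$g$ through $\Pcal^\CC$ tropicalize to plane tropical curves of the same degree and genus through~$\Pcal$, this map is surjective, and the number of complex curves lying over a fixed tropical curve~$h$ equals its complex multiplicity $\mu_\CC(h)=\prod_V m_V$, the product over the (trivalent) vertices~$V$ of~$h$ of the lattice index $m_V=|\det(\Delta_a(e),\Delta_a(e'))|$ of any two edge vectors emanating from~$V$. To invoke this I must verify that a vertically stretched configuration is tropically generic and that every irreducible tropical curve of degree~$d$ and genus~$g$ through it is \emph{simple}, i.e.\ trivalent of the expected combinatorial type. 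This is precisely the structural information supplied by Lemma~\ref{lem:floor-stretched} and by the explicit reconstruction carried out in the proof of Theorem~\ref{th:trop=combin}, together with the degree normalization of Remark~\ref{rem:deg-simple}.

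The heart of the argument is the multiplicity computation. Every vertex~$V$ of such an~$h$ is a breakpoint on a single floor~$F_v$ at which exactly one elevator~$e$, of weight $w(e)$, is attached; the two floor segments meeting at~$V$ have integer slopes differing by $w(e)$, while the elevator is vertical. Pairing the vertical edge vector $(0,\pm w(e))$ with an adjacent floor segment vector $(1,s)$ of integer slope~$s$, I get $m_V=|\det((0,\pm w(e)),(1,s))|=w(e)$. I then group the factors of $\mu_\CC(h)=\prod_V m_V$ by elevator: each bounded elevator~$e$ (an edge of $\Dcal(h)$) has two endpoints, lying on two distinct floors, hence contributes two vertices of multiplicity $w(e)$ each, for a total of $w(e)^2$; each unbounded elevator is primitive of weight~$1$ by Remark~\ref{rem:deg-simple} and so contributes~$1$. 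Therefore $\mu_\CC(h)=\prod_{e\in E}w(e)^2=\mu(\Dcal(h))$, which is exactly the asserted count of complex preimages.

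I expect the main obstacle to be not the determinant bookkeeping but the verification that the vertically stretched hypothesis really lands inside the genericity assumptions of \cite[Theorem~1]{mikhalkin-05}: one must rule out unexpected coincidences (higher-valent vertices, non-transverse floors, or superabundant families), so that the curves being counted are precisely the simple ones for which the multiplicity formula holds. Once that is secured, surjectivity of the tropicalization map is part of the cited theorem, the image is identified with the marked floor diagrams of degree~$d$ and genus~$g$ via Theorem~\ref{th:trop=combin}, and the multiplicity identity above completes the proof.
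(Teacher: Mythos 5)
Your proposal follows essentially the same route as the paper: both deduce the statement from the correspondence theorem \cite[Theorem~1]{mikhalkin-05} (after fixing the Newton polygon $\Delta_d$ and using the vertically stretched hypothesis via Lemma~\ref{lem:floor-stretched}), and both reduce the remaining work to identifying the tropical multiplicity $\mu_{\C}(h)$ with $\mu(\Dcal(h))$. The paper states this identification as a bare observation, whereas you carry out the vertex-by-vertex determinant computation (each elevator--floor vertex contributes $w(e)$, bounded elevators contribute $w(e)^2$, unbounded ones contribute $1$), which is correct and simply makes explicit what the paper leaves implicit.
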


\begin{proof}
To adapt the correspondence theorem to our current setup,
fix the Newton polygon of the curves under consideration to be the set 
\[
\Delta_d=\{(i,j):i\ge0, j\ge0, i+j\le d\},  
\] 
and observe that for a plane tropical
curve $h$ passing through $\Pcal$, 
the complex multiplicity $\mu_{\C}(h)$ (as defined
in~\cite{mikhalkin-05}) 
is equal to the multiplicity of the labeled floor diagram $\Dcal(h)$
as defined in~\eqref{eq:mult-D}. 
\end{proof}

Combining Theorems~\ref{th:trop=combin}
and~\ref{th:geom-correspondence}, we obtain the following enhancement 
of Theorem~\ref{th:bm-curves}. 

\begin{theorem}[Combinatorial correspondence theorem for plane curves]
\label{th:combin-correspondence}\ \\
Let $\Pcal\subset\RR^2$ be a %generic
vertically stretched $(d,g)$-configuration. 
Let $t$ be sufficiently large. 
Let ${\Pcal^{\C}}\subset(\C^\times)^2$ be a configuration
of $3g-1+d$ points such that 
$\Log_t({\mathcal P^{\C}})=\Pcal$.
Then the composition 
\[
\gamma\mapsto \tDcal(\operatorname{Trop}_{\Pcal^\CC,t}(\gamma),\Pcal) 
\]
is a surjection from the set of irreducible complex algebraic
curves~$\gamma$ 
of degree $d$ and genus~$g$ passing through ${\mathcal P^{\C}}$
to the set of marked floor diagrams $\tDcal$ of degree~$d$ and genus~$g$. 
Under this map, the preimage of a marking~$\tDcal$  
of a labeled floor diagram~$\Dcal$ consists of 
$\mu(\Dcal)$ distinct complex curves.
Consequently, \eqref{eq:N=sum-mu-nu}~holds. 
\end{theorem}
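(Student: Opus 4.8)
The plan is to prove Theorem~\ref{th:combin-correspondence} by straightforward composition of the two preceding theorems, treating it essentially as a corollary. First I would observe that the two maps to be composed are already in hand: Theorem~\ref{th:geom-correspondence} provides the tropicalization surjection $\gamma\mapsto\operatorname{Trop}_{\Pcal^\CC,t}(\gamma)$ from irreducible complex algebraic curves of degree~$d$ and genus~$g$ through $\Pcal^\CC$ onto irreducible plane tropical curves of the same degree and genus through~$\Pcal$, and Theorem~\ref{th:trop=combin} provides the bijection $h\mapsto\tDcal(h,\Pcal)$ from those tropical curves onto marked floor diagrams of degree~$d$ and genus~$g$. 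Composing a surjection with a bijection yields a surjection, so the stated composite $\gamma\mapsto\tDcal(\operatorname{Trop}_{\Pcal^\CC,t}(\gamma),\Pcal)$ is automatically a surjection onto the set of marked floor diagrams.

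Next I would analyze the fibers. Fix a marked floor diagram~$\tDcal$, which is a marking of some labeled floor diagram~$\Dcal$. By the bijectivity in Theorem~\ref{th:trop=combin}, there is a \emph{unique} tropical curve~$h$ with $\tDcal(h,\Pcal)=\tDcal$, and moreover $\Dcal(h)=\Dcal$. The preimage of~$\tDcal$ under the composite therefore coincides with the preimage $\operatorname{Trop}_{\Pcal^\CC,t}^{-1}(h)$ of this single tropical curve. Theorem~\ref{th:geom-correspondence} asserts that this fiber consists of exactly $\mu(\Dcal(h))=\mu(\Dcal)$ distinct complex curves, which is precisely the count claimed.

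Finally I would derive \eqref{eq:N=sum-mu-nu} by counting the domain in two ways. The number~$N_{d,g}$ is, by definition, the cardinality of the set of irreducible complex curves of degree~$d$ and genus~$g$ through the configuration~$\Pcal^\CC$ (for generic such configuration, which a vertically stretched lift furnishes). Partitioning this set according to the value of the composite map and summing the fiber sizes gives
\begin{equation}
N_{d,g}=\sum_{\tDcal}\mu(\Dcal),
\end{equation}
the sum over all marked floor diagrams~$\tDcal$ of degree~$d$ and genus~$g$, where $\Dcal$ is the underlying labeled floor diagram of~$\tDcal$. Regrouping this sum by the underlying labeled floor diagram~$\Dcal$, and noting that the number of markings of~$\Dcal$ is $\nu(\Dcal)$ while each contributes the same multiplicity $\mu(\Dcal)$, collapses it to $\sum_\Dcal\mu(\Dcal)\,\nu(\Dcal)$, which is~\eqref{eq:N=sum-mu-nu}.

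I do not expect a genuine obstacle here, since all the substance resides in the two cited theorems. The only point requiring a modicum of care is the implicit genericity issue: $N_{d,g}$ is defined via a \emph{fixed generic} configuration, whereas here the configuration is the $\Log_t$-lift of a vertically stretched one. I would dispatch this by invoking the fact that the correspondence theorem of~\cite{mikhalkin-05} applies to such configurations and that the enumerative count $N_{d,g}$ is independent of the generic configuration chosen, so the vertically stretched lift computes the same invariant.
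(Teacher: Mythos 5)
Your proposal is correct and matches the paper exactly: the paper offers no separate proof of Theorem~\ref{th:combin-correspondence}, simply stating that it is obtained by combining Theorems~\ref{th:trop=combin} and~\ref{th:geom-correspondence}, which is precisely your composition-of-maps argument, fiber count, and regrouping of the sum by underlying labeled floor diagram. Your added remark on genericity of the lifted configuration is a reasonable point of care that the paper leaves implicit.
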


\subsection{%Generalized combinatorial correspondence: tangency
	 %conditions
Combinatorial rule for the relative Gromov-Witten invariants} 
In this section, we give a generalization of the combinatorial
correspondence theorem (Theorem~\ref{th:combin-correspondence}) 
to the problem of counting complex curves of given degree and genus
which, in addition to passing through a given point configuration, 
satisfy some prescribed tangency conditions. 
This will require a suitable generalization of the notion of a marking
of a labeled floor diagram. 

The tangency conditions we will be working with are described by
integer \emph{partitions}. 
In dealing with the latter, we will use the standard notational 
conventions of the combinatorial theory of partitions (see,
e.g., \cite[Section~1.3]{ec1} or \cite[Section~7.2]{ec2}) 
rather than those used by Caporaso and
Harris~\cite{caporaso-harris} in their classical treatment of the
subject. 
Recall that a \emph{partition} $\lambda\!=\!(\lambda_1,\lambda_2,\dots)$ 
of an integer $n\ge 0$ is a weakly decreasing sequence of
nonnegative integers~$\lambda_i$ 
whose sum is equal to~$n$: 
\[
%\lambda_1\ge\lambda_2\ge\cdots\ge0, \qquad
|\lambda|=\lambda_1+\lambda_2+\cdots=n. 
\]
The nonzero $\lambda_i$'s are called the \emph{parts} of~$\lambda$.
The number of parts is the \emph{length} of~$\lambda$,
denoted~$\ell(\lambda)$. 
We write 
\begin{equation}
\label{eq:lambda-exp}
\lambda=\langle1^{\alpha_1} \,2^{\alpha_2}\cdots\rangle
\end{equation}
to express the fact that $\lambda$ has $\alpha_i$ parts equal to~$i$,
for each~$i$. 
Thus $\ell(\lambda)=\alpha_1+\alpha_2+\cdots$.
%For example, we would write $(3,1,1,0,0,\dots)=311=\langle1^2 2^0
%3^1\rangle$. 

%We denote 
%\[
%z_\lambda= \alpha! 1^{\alpha_1} \,2^{\alpha_2}\cdots
%\]

Our first goal is to extend Theorem~\ref{th:geom-correspondence} to 
a more general setting.
This will require some terminological preparation.

\begin{definition}[\emph{Tangency conditions described by pairs of
      partitions}] 
\label{def:tangency-two-partitions}
Let $\lambda$ and $\rho$ be two integer partitions. 
Let 
\begin{equation}
\label{eq:p-lambda}
\Pcal_\lambda=(p_1>p_2>\cdots>p_{\ell(\lambda)})
\end{equation}
be a configuration
of $\ell(\lambda)$ points in~$\R$, and 
%We assign weights to the points in $\Pcal_\lambda$ by setting the
%weight of each $p_i$ to be~$\lambda_i$. 
%Let us extend the map $\Log_t$ defined in~\eqref{eq:Log-t}
%to a map $\C^\times\times\C\to \R^2\sqcup \R$ (disjoint union) 
%by setting $\Log_t(z,0)=\log_t|z|\in \R$.
let 
\[
\Pcal_\lambda^{\C}
=\{(x_1,0),(x_2,0),\dots,(x_{\ell(\lambda)},0)\}\subset
%\C^{\times}\times\{0\}\subset
\C^{\times}\times\C
\]
be a configuration of points on the $x$-axis 
$\C^{\times}\times\{0\}\subset\C^{\times}\times\C$
such that 
%$\Log_t(\mathcal P^{\C}_\lambda)=\Pcal_\lambda$. 
$\log_t|x_i|=p_i$ for every~$i$. 
We say that a complex curve $\gamma$ has 
\emph{$(\lambda,\Pcal^{\C}_\lambda,\rho)$-tangency to the $x$-axis} if
$\gamma$~meets the latter 
at $\ell(\lambda)+\ell(\rho)$ points, as follows: 
\begin{itemize}
\item
$\gamma$ passes through $\Pcal_\lambda^{\C}$, 
with tangency to the $x$-axis of degree~$\lambda_i$ at each point $(x_i,0)$;
\item
$\gamma$ passes through some other $\ell(\rho)$ points on the
$x$-axis, with the degrees of tangency to the $x$-axis at those points
forming the partition~$\rho$. 
\end{itemize}
\end{definition}

\begin{definition}[\emph{Grounding conditions for plane tropical curves}] 
Let $h:C\to\R^2$ be a plane tropical curve of degree~$d$ and
genus~$g$. 
A \emph{ground elevator} of~$h$ % or an \emph{elevator to the ground}
is an unbounded elevator~$e$ 
(see Definition~\ref{def:elevators})
for which the vector~$\Delta(e)$ 
(see Definition~\ref{def:trop-morphism})
is a positive multiple of $(0,-1)$. 

We say that a tropical curve $h:C\to\R^2$
is \emph{$(\lambda,\Pcal_\lambda,\rho)$-grounded} (cf.~\eqref{eq:p-lambda})
if 
\begin{itemize}
\item
each point $(p_i,0)\in\Pcal_\lambda\times\{0\}$ is contained in a
ground elevator of weight~$\lambda_i$; 
\item
the weights of the remaining ground elevators form the partition~$\rho$.
\end{itemize}
See Figure~\ref{fig:two-trop-conics} for an example. 

%For a tropical morphism $h:C\to\R^2$ with simple floor structure
%with $\alpha$ ground elevators at $\Pcal^{\C}_\alpha$ and
%$\beta$ free ground elevators we
We define the \emph{(complex) multiplicity} of such a tropical curve  
as the number 
\begin{equation}
\label{eq:mult-ground}
\mu_\rho(h)=\prod_e w^2(e) \prod_{i=1}^{\ell(\rho)} \rho_i\,,
\end{equation}
where $e$ runs over all bounded elevators. 
%We define its real multiplicity ${\mu_{\R}}$ to be 1 if all the elevators
%have odd weight and to be 0 otherwise.
%
%To every tropical curve of genus $g$ and degree $d$ passing through a configuration
%of $2d-1+|\beta|$ points in $\R^2$ with $\alpha$ ground elevators at $\Pcal^{\C}_\alpha$ and
%$\beta$ free ground elevators we associate its multiplicity by taking the product of all
\end{definition}

\begin{figure}[htbp]
\begin{center}
\begin{tabular}{c|c}
\setlength{\unitlength}{1pt}
\begin{picture}(90,290)(5,10)
\thicklines

%\linethickness{1.5pt}
\darkred{\linethickness{1pt}
\put(40,120){\line(0,1){60}}
\put(50,140){\line(0,1){50}}
\put(70,230){\line(0,1){50}}
}

\lightgreen{\linethickness{1pt}
\put(10,10){\line(0,1){30}}
\put(20,10){\line(0,1){50}}
}

\put(0,40){\line(1,0){10}}
\put(0,180){\line(1,0){40}}
\put(0,280){\line(1,0){70}}

%\put(10,60){\line(1,1){10}}
\put(40,180){\line(1,1){10}}
\put(50,140){\line(1,1){40}}
\put(70,230){\line(1,1){20}}

\put(70,280){\line(1,1){8.6}}
\put(90,300){\line(-1,-1){8.6}}

\put(10,40){\line(1,2){10}}
\put(40,120){\line(1,2){10}}

\put(50,190){\line(1,2){9.1}}
\put(70,230){\line(-1,-2){9.1}}

\put(20,60){\line(1,3){9.4}}
\put(40,120){\line(-1,-3){9.4}}

\put(10,10){\circle*{4}}
\put(20,10){\circle*{4}}
\put(30,90){\circle{4}}
\put(40,130){\circle*{4}}
\put(50,170){\circle*{4}}
\put(60,210){\circle{4}}
\put(70,250){\circle*{4}}
\put(80,290){\circle{4}}

\thinlines
\put(0,10){\vector(1,0){80}}
%\put(80,4){\makebox(0,0){${x}$}}

%\put(27,3){\makebox(0,0){${p_1}$}}
%\put(3,3){\makebox(0,0){${p_2}$}}

\put(5,25){\makebox(0,0){$\scriptstyle{2}$}}

%\put(60,5){\makebox(0,0){$\mu\!=\!2$}}
\end{picture}
\ \ 
\begin{picture}(20,290)(5,10)
\thicklines

\darkred{\linethickness{1pt}
\put(0,248){\line(0,-1){36}}
\put(0,230){\vector(0,-1){5}}
\put(0,288){\line(0,-1){36}}
\put(0,270){\vector(0,-1){5}}
\qbezier(1.2,91.6)(30,130)(1,168.4)
\put(16,130){\vector(0,-1){1}}
\put(0,208){\line(0,-1){36}}
\put(0,190){\vector(0,-1){5}}
\qbezier(1.2,131.6)(30,170)(1.2,208.4)
\put(16,170){\vector(0,-1){1}}
\put(0,128){\line(0,-1){36}}
\put(0,110){\vector(0,-1){5}}

}

\lightgreen{\linethickness{1pt}
\put(0,88){\line(0,-1){36}}
\put(0,70){\vector(0,-1){5}}
\qbezier(1.2,11.6)(30,50)(1,88.4)
\put(16,50){\vector(0,-1){1}}
}

\put(22,50){\makebox(0,0){$\scriptstyle{2}$}}

\put(0,10){\circle*{4}}
\put(0,50){\circle*{4}}
\put(0,90){\circle{4}}
\put(0,130){\circle*{4}}
\put(0,170){\circle*{4}}
\put(0,210){\circle{4}}
\put(0,250){\circle*{4}}
\put(0,290){\circle{4}}

\end{picture}
{\ \ }&{\ \ }
\setlength{\unitlength}{1pt}
\begin{picture}(90,290)(5,10)
\thicklines

%\linethickness{1.5pt}
\darkred{\linethickness{1pt}
\put(40,120){\line(0,1){60}}
\put(50,140){\line(0,1){50}}
\put(70,230){\line(0,1){50}}
}

\lightgreen{\linethickness{1pt}
\put(10,10){\line(0,1){40}}
\put(20,10){\line(0,1){50}}
}

\put(0,50){\line(1,0){10}}
\put(0,180){\line(1,0){40}}
\put(0,280){\line(1,0){70}}

\put(10,50){\line(1,1){10}}
\put(40,180){\line(1,1){10}}
\put(50,140){\line(1,1){40}}
\put(70,230){\line(1,1){20}}

\put(70,280){\line(1,1){8.6}}
\put(90,300){\line(-1,-1){8.6}}

\put(40,120){\line(1,2){10}}

\put(50,190){\line(1,2){9.1}}
\put(70,230){\line(-1,-2){9.1}}

\put(20,60){\line(1,3){9.4}}
\put(40,120){\line(-1,-3){9.4}}

\put(10,10){\circle*{4}}
\put(20,10){\circle*{4}}
\put(30,90){\circle{4}}
\put(40,130){\circle*{4}}
\put(50,170){\circle*{4}}
\put(60,210){\circle{4}}
\put(70,250){\circle*{4}}
\put(80,290){\circle{4}}

\thinlines
\put(0,10){\vector(1,0){80}}
%\put(80,4){\makebox(0,0){${x}$}}

%\put(27,3){\makebox(0,0){${p_1}$}}
%\put(3,3){\makebox(0,0){${p_2}$}}

\put(25,30){\makebox(0,0){$\scriptstyle{2}$}}

%\put(60,5){\makebox(0,0){$\mu\!=\!2$}}
\end{picture}
\ \ 
\begin{picture}(20,290)(5,10)
\thicklines

\darkred{\linethickness{1pt}
\put(0,248){\line(0,-1){36}}
\put(0,230){\vector(0,-1){5}}
\put(0,288){\line(0,-1){36}}
\put(0,270){\vector(0,-1){5}}
\qbezier(1.2,91.6)(30,130)(1,168.4)
\put(16,130){\vector(0,-1){1}}
\put(0,208){\line(0,-1){36}}
\put(0,190){\vector(0,-1){5}}
\qbezier(1.2,131.6)(30,170)(1.2,208.4)
\put(16,170){\vector(0,-1){1}}
\put(0,128){\line(0,-1){36}}
\put(0,110){\vector(0,-1){5}}

}

\lightgreen{\linethickness{1pt}
\put(0,88){\line(0,-1){36}}
\put(0,70){\vector(0,-1){5}}
\qbezier(1.2,11.6)(30,50)(1,88.4)
\put(16,50){\vector(0,-1){1}}
}

\put(-6,70){\makebox(0,0){$\scriptstyle{2}$}}

\put(0,10){\circle*{4}}
\put(0,50){\circle*{4}}
\put(0,90){\circle{4}}
\put(0,130){\circle*{4}}
\put(0,170){\circle*{4}}
\put(0,210){\circle{4}}
\put(0,250){\circle*{4}}
\put(0,290){\circle{4}}

\end{picture}

\end{tabular}
\end{center}
\caption{Two tropical cubics passing through 6~vertically stretched points
and $(\lambda,\Pcal_\lambda,\rho)$-grounded
with $\lambda\!=\!\emptyset$, $\Pcal\!=\!\emptyset$, and $\rho\!=\!(2,1)$. 
The multiplicity of each tropical cubic is~2; 
thus $N_{3,1}(\emptyset,(2,1))\!=\!4$. 
}
\label{fig:two-trop-conics}
\end{figure}

Theorem~\ref{c-correspondence-alphabeta} below is a slight
generalization of Theorem~\ref{th:geom-correspondence} 
(which is a special case of the correspondence theorem
\cite[Theorem~1]{mikhalkin-05}), 
and can be proved in a similar way.
The proof will appear in~\cite{mikhalkin-08}. 

\begin{theorem}
\label{c-correspondence-alphabeta}
Let $\Pcal$ be a generic configuration of $2d-1+g+\ell(\rho)$ points
in~$\R^2$ satisfying the condition~\eqref{eq:vert-stretched}. 
Let~${\mathcal P^{\C}}\subset(\C^\times)^2$ be a configuration of
points such that $\Log_t({\mathcal P^{\C}})=\Pcal$.
Let configurations $\Pcal_\lambda$ and $\Pcal^\CC_\lambda$ be as in 
Definition~\ref{def:tangency-two-partitions}. 
Let $t\in\RR_{>0}$ be sufficiently large.

Then there is a canonical surjection 
from the set of irreducible complex algebraic curves
of degree $d$ and genus $g$ passing through ${\mathcal P^{\C}}$
and having $(\lambda,\Pcal^{\C}_\lambda,\rho)$-tangency to the $x$-axis
to the set of irreducible $(\lambda,\Pcal_\lambda,\rho)$-grounded
plane tropical curves of degree~$d$ and genus~$g$ passing
through~$\Pcal$. 
Under this surjection, the preimage of each such tropical curve~$h$ 
consists of $\mu_\rho(h)$ distinct complex curves.
%Furthermore, there exists a positive real valued function $\epsilon(t)\to 0$, $t\to+\infty$
%such that the $\Log_t$-amoeba of a complex algebraic curve in our set is contained in
%an $\epsilon(t)$-neighborhood of the image of the corresponding tropical morphism in $\R^2$.
\end{theorem}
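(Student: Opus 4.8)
The plan is to deduce this from the (relative) correspondence theorem of tropical geometry \cite{mikhalkin-05,mikhalkin-08}, following the same strategy used for Theorem~\ref{th:geom-correspondence}. As there, I would fix the Newton polygon of the curves under consideration to be the triangle $\Delta_d=\{(i,j):i\ge0,\,j\ge0,\,i+j\le d\}$, and work in the associated toric surface $X_{\Delta_d}$, whose boundary divisors correspond to the three edges of $\Delta_d$. The $x$-axis is the divisor attached to the bottom edge $\{j=0\}$, whose outer normal is $(0,-1)$; tangency of a complex curve to the $x$-axis of order $k$ at a point is exactly the local intersection multiplicity $k$ of the curve with this divisor. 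The relative correspondence theorem of \cite{mikhalkin-08}—a version of \cite[Theorem~1]{mikhalkin-05} in which some ends are required to meet a chosen toric divisor with a prescribed profile, some at fixed points and some at free ones—supplies, for generic $\Pcal$ and sufficiently large $t$, a canonical tropicalization surjection together with a count of complex preimages of each simple tropical curve. The bulk of the work is to check that, under the dictionary below, this abstract statement specializes to the one asserted here.

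The dictionary is as follows. Under tropicalization, an intersection point of multiplicity $k$ with the $x$-axis becomes a ground elevator of weight $k$ (an unbounded edge in the direction $(0,-1)$; cf.\ the definition of a ground elevator above). The fixed tangency points of $\Pcal_\lambda^{\CC}$ from Definition~\ref{def:tangency-two-partitions}, with tangency orders $\lambda_i$ at the points $(x_i,0)$ where $\log_t|x_i|=p_i$, correspond precisely to ground elevators of weight $\lambda_i$ forced to pass through the points $(p_i,0)$; the remaining, unconstrained tangency points, whose orders form $\rho$, correspond to free ground elevators whose weights form $\rho$. Thus a complex curve has $(\lambda,\Pcal_\lambda^{\CC},\rho)$-tangency to the $x$-axis if and only if its tropicalization is $(\lambda,\Pcal_\lambda,\rho)$-grounded. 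One also checks that the $2d-1+g+\ell(\rho)$ points of $\Pcal$ together with the $\ell(\lambda)$ fixed tangency points match the expected dimension of the relevant curve space, so that for generic data every grounded tropical curve in the image is simple (trivalent of the generic combinatorial type).

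It remains to identify the preimage count with the multiplicity $\mu_\rho(h)$ of \eqref{eq:mult-ground}, and this is the step I expect to be the crux. For a simple plane tropical curve the relative correspondence theorem expresses the number of complex preimages as a product of local contributions: a factor $m_V=|\det(u_1,u_2)|$ at each trivalent vertex $V$, together with a boundary contribution at each ground end. A direct local computation at a vertex where an elevator of weight $w$ meets a floor (two floor segments of weight $1$ whose slopes differ by $w$) shows that such a vertex contributes exactly the factor $w$. Since every bounded elevator has two endpoints, one on each adjacent floor, while every ground elevator has a single endpoint, the product of all vertex factors equals $\prod_{e\text{ bounded}}w^2(e)\cdot\prod_i\lambda_i\cdot\prod_j\rho_j$, the last two products ranging over the fixed and free ground elevators respectively. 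The essential point—and where the relative theory genuinely departs from the absolute statement of Theorem~\ref{th:geom-correspondence}—is the treatment of the ground ends. A \emph{free} ground end of weight $\rho_j$ retains its vertex factor $\rho_j$: the tropicalization fixes only the modulus of the complex intersection point on the $x$-axis, and the $\rho_j$ complex curves lying over a given tropical configuration are exactly those recorded by that factor. A \emph{fixed} ground end of weight $\lambda_i$, by contrast, carries the additional constraint that its complex intersection point equal $(x_i,0)$ exactly; this constraint cancels the vertex factor $\lambda_i$, so fixed ends contribute trivially. After this cancellation the product of all local contributions is precisely $\mu_\rho(h)=\prod_{e\text{ bounded}}w^2(e)\prod_{i=1}^{\ell(\rho)}\rho_i$, as required. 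Making this local analysis at the fixed and free ends rigorous—via the toric degeneration and patchworking techniques underlying the proof of \cite[Theorem~1]{mikhalkin-05}—is the main technical obstacle, and is exactly the content deferred to \cite{mikhalkin-08}.
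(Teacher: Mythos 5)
Your proposal follows essentially the same route as the paper itself, which offers no self-contained proof of Theorem~\ref{c-correspondence-alphabeta}: it remarks only that the statement is a slight generalization of Theorem~\ref{th:geom-correspondence}, provable in the same way by specializing the correspondence theorem (fixing the Newton polygon~$\Delta_d$ and matching multiplicities), with the actual argument deferred to~\cite{mikhalkin-08}. Your added bookkeeping---tangency points of order $k$ corresponding to ground elevators of weight~$k$, bounded elevators contributing $w^2(e)$ via their two floor vertices, free ground ends contributing $\rho_j$, and fixed ground ends contributing trivially---is consistent with the multiplicity $\mu_\rho(h)$ of~\eqref{eq:mult-ground} and with the examples in the paper (e.g.\ Figure~\ref{fig:two-trop-conics} and Corollary~\ref{cor:Nd00d}), and like the paper you correctly identify the rigorous local analysis at the ends as the content deferred to~\cite{mikhalkin-08}.
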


We generalize Definition~\ref{def-marking}, as follows. 
(A~similar definition, with $g=0$, appeared in
\cite[Definition~4.2]{arroyo-brugalle-lopez}.) 

\begin{definition}[\emph{%$(\lambda,\rho)$-marked 
Floor diagrams marked by pairs of partitions}]
Let $\Dcal$ be a labeled floor diagram of degree~$d$ and genus~$g$. 
Let $\lambda$ and $\rho$ be two partitions with $|\lambda|+|\rho|=d$.
A~$(\lambda,\rho)$-marking of~$\Dcal$ is a combinatorial decoration
of~$\Dcal$ obtained by the following modification of the procedure
used in Definition~\ref{def-marking}. 
We will illustrate the steps of this procedure using a running
example, in which $d=4$, $g=1$, $\lambda=(2)$, $\rho=(1,1)$,
and $\Dcal$ is the labeled floor diagram from
Example~\ref{example:lfd4}:
\[
\begin{picture}(80,15)(30,-4)\setlength{\unitlength}{4pt}\thicklines
\oooo\Eee\eOe\eeE
\put(25,1.5){\makebox(0,0){$2$}}
\put(7,0){\vector(1,0){1}}
\put(17.5,1.75){\vector(2,-1){1}}
\put(17.5,-1.75){\vector(2,1){1}}
\put(27,0){\vector(1,0){1}}
\end{picture}
\]

\textbf{Step~0.}
Introduce $\ell(\lambda)$ new vertices denoted
$v_1,\dots,v_{\ell(\lambda)}$:  
\[
\begin{picture}(80,20)(30,-4)\setlength{\unitlength}{4pt}\thicklines
\oooo\Eee\eOe\eeE
\put(25,1.5){\makebox(0,0){$2$}}
\put(7,0){\vector(1,0){1}}
\put(17.5,1.75){\vector(2,-1){1}}
\put(17.5,-1.75){\vector(2,1){1}}
\put(27,0){\vector(1,0){1}}
\put(40,0){\circle{2}}
\put(40,0){\circle*{1}}
\put(42,2){\makebox(0,0){$v_1$}}
\end{picture}
\]

\textbf{Step~1.}
For each original vertex $v$ in~$\Dcal$, introduce some number (possibly none)
of additional new vertices, and connect $v$ to each of them 
by a single edge directed away from~$v$.
In addition, introduce some (possibly none) edges directed from $v$
to $v_1,\dots,v_{\ell(\lambda)}$. 
Assign positive integer weights to all these new edges, so that
\begin{itemize}
\item
for each original vertex $v$ in~$\Dcal$, 
the total weight of all new edges (of both kinds) originating
at~$v$ is equal to $1-\divv(v)$;
\item
the weights of all edges arriving at $v_1,\dots,v_{\ell(\lambda)}$ 
%(from all vertices~$v$) 
form the partition~$\lambda$; 
\item
the weights of all other new edges form the partition~$\rho$. 
\end{itemize}
%
%The vertices added 
%at this step will become the \emph{sinks} of the marked diagram.
%We keep track of the edge weights~$w(e)$.
%
%\newcommand{\sfrac}[2]{\leavevmode\kern.0em
%    \raise.5ex\hbox{\scriptsize #1}\kern-.2em
%        \footnotesize/\kern-.15em\lower.25ex\hbox{\scriptsize #2}}
%\textbf{Step~1\sfrac12.}
%Identify a subset $\widetilde V_\lambda\subset\widetilde V$
%consisting of some of the sinks. 
%The weights of the edges pointing to the sinks in~$\widetilde
%V_\lambda$ must form the partition~$\lambda$. 
%The weights of the edges pointing to the remaining sinks
%must form the partition~$\rho$. 
Thus, the total weight of all new edges is
equal to $\sum_v (1-\divv(v))=d=|\lambda|+|\rho|$. 
\begin{center}
\begin{picture}(80,45)(30,-25)\setlength{\unitlength}{4pt}\thicklines
\oooo\Eee\eOe\eeE
\put(25,1.5){\makebox(0,0){$2$}}
\put(35,1.5){\makebox(0,0){$2$}}
\put(7,0){\vector(1,0){1}}
\put(17.5,1.75){\vector(2,-1){1}}
\put(17.5,-1.75){\vector(2,1){1}}
\put(27,0){\vector(1,0){1}}
\put(37,0){\vector(1,0){1}}
\put(31,0){\line(1,0){8}}

\put(20.7,-0.7){\line(1,-1){4}}
\put(20.7,-0.7){\vector(1,-1){3}}
\put(25,-5){\circle*{2}}
\put(30.6,-0.8){\line(1,-1){4}}
%\put(30.8,-0.6){\line(2,-1){9}}
%\put(30.9,-0.4){\line(3,-1){14}}
\put(30.6,-0.8){\vector(1,-1){2.5}}
%\put(30.8,-0.6){\vector(2,-1){5}}
%\put(30.9,-0.4){\vector(3,-1){7.5}}
\put(35,-5){\circle*{2}}
%\put(40,-5){\circle*{2}}
%\put(45,-5){\circle*{2}}
\put(40,0){\circle{2}}
\put(40,0){\circle*{1}}
\put(42,2){\makebox(0,0){$v_1$}}

\end{picture}
\end{center}

\textbf{Step~2.}
Split each original edge $e$ of $\Dcal$ in two,
by inserting an extra vertex in the middle of~$e$;
the resulting two edges inherit $e$'s orientation and weight. 
\begin{center}
\begin{picture}(80,45)(30,-25)\setlength{\unitlength}{4pt}\thicklines
\oooo\Eee\eOe\eeE
\put(22.5,2){\makebox(0,0){$2$}}
\put(27.5,2){\makebox(0,0){$2$}}
\put(35,1.5){\makebox(0,0){$2$}}
\put(37,0){\vector(1,0){1}}
\put(31,0){\line(1,0){8}}
\put(2.5,0){\vector(1,0){1}}
\put(7.5,0){\vector(1,0){1}}
\put(17.5,1.75){\vector(2,-1){1}}
\put(17.5,-1.75){\vector(2,1){1}}
\put(12.5,1.75){\vector(2,1){1}}
\put(12.5,-1.75){\vector(2,-1){1}}
\put(22.5,0){\vector(1,0){1}}
\put(27.5,0){\vector(1,0){1}}
\put(5,0){\circle*{2}}
\put(15,2.5){\circle*{2}}
\put(15,-2.5){\circle*{2}}
\put(25,0){\circle*{2}}
\put(20.7,-0.7){\line(1,-1){4}}
\put(20.7,-0.7){\vector(1,-1){3}}
\put(25,-5){\circle*{2}}
\put(30.6,-0.8){\line(1,-1){4}}
%\put(30.8,-0.6){\line(2,-1){9}}
%\put(30.9,-0.4){\line(3,-1){14}}
\put(30.6,-0.8){\vector(1,-1){2.5}}
%\put(30.8,-0.6){\vector(2,-1){5}}
%\put(30.9,-0.4){\vector(3,-1){7.5}}
\put(35,-5){\circle*{2}}
%\put(40,-5){\circle*{2}}
%\put(45,-5){\circle*{2}}
\put(40,0){\circle{2}}
\put(40,0){\circle*{1}}
\put(42,2){\makebox(0,0){$v_1$}}

\end{picture}
\end{center}
\noindent
Let $\widetilde G=(\widetilde V,\widetilde E)$ denote the acyclic
directed graph obtained after Steps~1--2.

\textbf{Step~3.}
Extend the linear ordering on the vertices of~$\Dcal$ to the
set~$\widetilde V$ 
%by the numbers $1,2,\dots,3d+g-1$
so that, as before, each edge in~$\widetilde E$
is directed from a smaller to a larger vertex.
We also require $v_1$ to be the maximal element under
the linear ordering, $v_2$~the second largest, etc.: 
\begin{center}
\begin{picture}(80,45)(30,-15)\setlength{\unitlength}{4pt}\thicklines
\put(35,4){\makebox(0,0){$2$}}
\put(45,4){\makebox(0,0){$2$}}
\put(27.5,2){\makebox(0,0){$2$}}
\multiput(0,0)(10,0){2}{\circle{2}}
\multiput(25,0)(10,0){1}{\circle{2}}
\multiput(5,0)(10,0){2}{\circle*{2}}
\multiput(20,0)(10,0){2}{\circle*{2}}
\multiput(45,0)(10,0){1}{\circle*{2}}
\put(35,0){\circle*{2}}
\put(40,0){\circle{2}}
\put(50,0){\circle{2}}
\put(50,0){\circle*{1}}
%\put(52,2){\makebox(0,0){$v_1$}}
\put(1,0){\line(1,0){8}}
\put(11,0){\line(1,0){3}}
\put(2.5,0){\vector(1,0){1}}
\put(7.5,0){\vector(1,0){1}}
\put(12.5,0){\vector(1,0){1}}
\qbezier(10.8,0.6)(15,4)(19.2,0.6)
\put(17.5,1.75){\vector(2,-1){1}}
\qbezier(15.8,-0.6)(20,-4)(24.2,-0.6)
\put(22.5,-1.75){\vector(2,1){1}}
\qbezier(25.8,-0.6)(30,-4)(34.2,-0.6)
\put(32.5,-1.75){\vector(2,1){1}}
\put(21,0){\line(1,0){3}}
\put(22.5,0){\vector(1,0){1}}
\put(26,0){\line(1,0){3}}
\put(27.5,0){\vector(1,0){1}}
\put(41,0){\line(1,0){3}}
\put(42.5,0){\vector(1,0){1}}

\qbezier(30.8,0.6)(35,4)(39.2,0.6)
\put(37.5,1.75){\vector(2,-1){1}}
\qbezier(40.8,0.6)(45,4)(49.2,0.6)
\put(47.5,1.75){\vector(2,-1){1}}

\end{picture}
\end{center}
\noindent
The resulting object $\tDcal$ is called a 
$(\lambda,\rho)$-\emph{marked floor diagram}
(of degree~$d$ and genus~$g$),
or a $(\lambda,\rho)$-\emph{marking} of the original labeled floor
diagram~$\Dcal$.
It is easy to see that $\tDcal$ has $2d+g-1+\ell(\lambda)+\ell(\rho)$
vertices and $2d+2g-2+\ell(\lambda)+\ell(\rho)$ edges. 
\end{definition}

The number of distinct $(\lambda,\rho)$-markings
of~$\Dcal$ is denoted by~$\nu_{\lambda,\rho}(\Dcal)$. 
We also denote (cf.~\eqref{eq:mult-ground})
\begin{equation}
\label{eq:mu-rho}
\mu_\rho(\tDcal)=\mu_\rho(\Dcal)=\mu(\Dcal)
\prod_{i=1}^{\ell(\rho)} \rho_i\,. 
\end{equation}

\begin{remark}
\label{rem:mfd-rho-simple}
Let $\lambda=\emptyset$ and $\rho=\langle 1^d \rangle$. 
In this special case, 
we recover the ordinary notion of a marked floor diagram
introduced in Definition~\ref{def-marking}. 
We also recover $\nu_{\emptyset,\langle 1^d \rangle}(\Dcal)=\nu(\Dcal)$
and $\mu_{\emptyset,\langle 1^d \rangle}(\Dcal)=\mu(\Dcal)$. 

More generally, let $\lambda=\langle1^k\rangle$ and 
$\rho=\langle1^{d-k}\rangle$.
Then a $(\lambda,\rho)$-marked floor diagram is nothing but an
ordinary marked floor diagram whose last $k$ vertices are sinks. 
\end{remark}

%[more examples needed, with computations of $\nu_{\lambda,\rho}(\Dcal)$] 

\begin{definition}[\emph{Relative Gromov-Witten invariants}] 
\label{def:G-W-two-partitions}
Let $N_{d,g}(\lambda,\rho)$ denote the number of irreducible complex
algebraic curves of degree~$d$ and genus~$g$ passing through 
a generic configuration %${\mathcal P^{\C}}$ 
of $2d-1+g+\ell(\rho)$ points
in~$\CC^2$ 
and having $(\lambda,\Pcal^{\C}_\lambda,\rho)$-tangency to the
$x$-axis (see Definition~\ref{def:tangency-two-partitions}), 
for a given generic collection $\Pcal^{\C}_\lambda$
of $\ell(\lambda)$ points on $\CC\times\{0\}$. 
\end{definition}

\pagebreak[3]

Thus $N_{d,g}(\lambda,\rho)$ counts irreducible plane curves $\gamma$
of given degree and genus intersecting 
a given line~$L$ (say the $x$-axis) with multiplicities described
by~$\lambda$ at a given collection of points on~$L$, 
and with multiplicities described
by~$\rho$ at some other unspecified points on~$L$;
in addition, $\gamma$~must pass through a  generic
configuration of (appropriately many) points on the plane. 

We note that the numbers $N_{d,g}(\lambda,\rho)$ 
%introduced in Definition~\ref{def:G-W-two-partitions} 
are different from the numbers 
$N^{d,\delta}(\alpha,\beta)$ 
%$\delta=\frac{(d-1)(d-2)}{2}-g$ introduced 
studied by Caporaso and Harris~\cite{caporaso-harris}
since $N_{d,g}(\lambda,\rho)$ only counts irreducible curves. 
The Caporaso-Harris numbers are the generalizations of the Severi
degrees discussed in Section~\ref{sec:Ndg+Severi}, and can be
similarly expressed as sums of the numbers
$N_{d,g}(\lambda,\rho)$ (with positive integer coefficients)
by considering possible partitions of a given configuration into
subconfigurations lying on the irreducible components of a curve. 
Conversely, the numbers $N_{d,g}(\lambda,\rho)$ can be computed from
the Caporaso-Harris numbers 
(also known as generalized Severi degrees) 
by an appropriate inclusion-exclusion procedure. 

%The Caporaso-Harris numbers
%\[
%N^*_{d,g}(\lambda,\rho)
%=\Bigl\langle\text{
%\!\!\begin{tabular}{l}
%number of nodal curves of degree $d$ and genus $g$\\
%passing through $3d+g-1$ generic points in $\CC^2$
%\end{tabular}\!\!
%}\Bigr\rangle
%\]
%can be obtained from ours as follows:
%\[
%N^*_{d,g}(\lambda,\rho)
%=\sum_{(d_1,g_1),(d_2,g_2),\dots}
%\frac{(3d+g-1)!}{\prod_i (3d_i+g_i-1)!}
%\prod_i ...
%\]

\begin{remark}
\label{rem:Ndg-simple}
Let $\lambda$ be a partition with $|\lambda|\le d$, and 
let $\rho=\langle1^{d-|\lambda|}\rangle$.
Then the tan\-gency conditions associated with~$\rho$ become vacuous. 
Consequently, the relative Gro\-mov-Witten invariant 
$N_{d,g}(\lambda,\langle1^{d-|\lambda|}\rangle)$ counts 
irreducible plane curves 
of degree~$d$ and genus~$g$ passing through 
a generic configuration of $\ell(\lambda)$ points on a given line
with tangencies to the line described by the partition~$\lambda$, 
and also passing through a generic
configuration of $3d-1+g-|\lambda|$ points on the plane. 

In particular, %in the special case $\lambda=\langle 1^{k}\rangle$,
%$\rho=\langle1^{d-k}\rangle$, 
the number
$N_{d,g}(\langle 1^{k}\rangle,\langle1^{d-k}\rangle)$ counts 
irreducible curves 
of degree~$d$ and genus~$g$ passing through 
a configuration of $3d-1+g$ points on the plane that contains $k$
collinear points and is otherwise generic. 
For $k\le 2$, we obviously get
\begin{equation}
\label{eq:Ndg-11}
N_{d,g}(\emptyset,\langle1^d\rangle)
=N_{d,g}((1),\langle1^{d-1}\rangle)
=N_{d,g}((1,1),\langle1^{d-2}\rangle)
=N_{d,g}\,. 
\end{equation}
\end{remark}

\medskip

At this point, the following analogue of Theorem~\ref{th:trop=combin}
should come as no surprise. 
(A~proof can be given along similar lines.)

\begin{theorem}
\label{th:trop=combin-two-ptns}
Under the assumptions of Theorem~\ref{c-correspondence-alphabeta}, 
there is a multiplicity-pre\-serving bijection $h\mapsto\tDcal$
(that is, $\mu_\rho(h)\!=\!\mu_\rho(\tDcal)$) 
between the plane tropical curves~$h$  described in that theorem 
and the $(\lambda,\rho)$-marked floor diagrams of degree~$d$
and genus~$g$. 
\end{theorem}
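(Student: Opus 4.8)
The plan is to adapt the ``reverse engineering'' argument used to prove Theorem~\ref{th:trop=combin}, modifying it to accommodate the grounding data $(\lambda,\Pcal_\lambda,\rho)$ and the two new families of vertices introduced in Steps~0--1 of the $(\lambda,\rho)$-marking procedure. As in that proof, the heart of the matter is to show that a $(\lambda,\rho)$-marking $\tDcal$ of a labeled floor diagram~$\Dcal$, together with the configuration~$\Pcal$ and the fixed points~$\Pcal_\lambda$, determines a \emph{unique} $(\lambda,\Pcal_\lambda,\rho)$-grounded plane tropical curve~$h$ with $\tDcal=\tDcal(h,\Pcal)$, and that conversely every such~$h$ arises this way (the grounded analogue of Lemma~\ref{lem:floor-stretched}).

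First I would set up the order-preserving vertex bijection. The vertex set $\widetilde V$ of~$\tDcal$ has $2d+g-1+\ell(\lambda)+\ell(\rho)$ elements, of which the $\ell(\lambda)$ sinks $v_1,\dots,v_{\ell(\lambda)}$ are declared maximal; I send these to the fixed points $(p_1,0),\dots,(p_{\ell(\lambda)},0)$ on the $x$-axis, and the remaining $2d-1+g+\ell(\rho)$ vertices to the points of~$\Pcal$ via the unique order-preserving bijection (with respect to the downwards order). A count confirms the match: the $d$ white vertices index the floors, the $d+g-1$ interior black vertices index the bounded elevators (the edges of~$\Dcal$), and the $\ell(\rho)$ black vertices attached to the $\rho$-edges index the free ground elevators. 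Next I would reconstruct~$h$ exactly as before: each floor $F_v$ is assembled right-to-left, its successive slopes being $1$ plus the signed sum of weights of edges of~$\tDcal$ joining~$v$ to smaller vertices (incoming positive, outgoing negative), with breakpoints located on the vertical lines through the points $p(u)$ for the vertices $u$ adjacent to~$v$; the floor is then fixed vertically by the requirement $p(v)\in F_v$. The edges of~$\Dcal$ are realized as bounded elevators between consecutive floors, while the new edges created in Steps~0--1 become unbounded ground elevators descending in the direction $(0,-1)$: those ending at some~$v_i$ descend at abscissa~$p_i$ and hence pass through the prescribed point $(p_i,0)$ with weight~$\lambda_i$, and those carrying the $\rho$-weights descend through their associated points of~$\Pcal$. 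The Step~1 condition that the total weight of new edges leaving~$v$ equals $1-\divv(v)$ is precisely what forces each floor to run from left slope~$0$ to right slope~$1$, so that the balancing condition~\eqref{eq:balancing} holds at every breakpoint.

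Once the construction is in place, the remaining verifications are routine analogues of those already carried out. The resulting~$h$ is irreducible, has genus~$g$, and has degree~$d$ by the ray count of Remark~\ref{rem:deg-simple} (the down-rays now carry total weight $|\lambda|+|\rho|=d$). By construction the ground elevators through the $(p_i,0)$ have weights~$\lambda_i$ while the remaining ground elevators have weights forming~$\rho$, so~$h$ is $(\lambda,\Pcal_\lambda,\rho)$-grounded. Multiplicity preservation is then immediate from the definitions: the bounded elevators of~$h$ are in weight-preserving bijection with the edges of~$\Dcal$, so comparing~\eqref{eq:mult-ground} with~\eqref{eq:mu-rho} gives $\mu_\rho(h)=\prod_{\text{bounded }e}w(e)^2\prod_i\rho_i=\mu(\Dcal)\prod_i\rho_i=\mu_\rho(\tDcal)$, the point being that neither the fixed $\lambda$-elevators nor the free $\rho$-elevators, being unbounded, enter the product over bounded edges.

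The step I expect to be the main obstacle is controlling the interaction between the two separately prescribed configurations $\Pcal$ and~$\Pcal_\lambda$. In Theorem~\ref{th:trop=combin} all breakpoint abscissae come from a single vertically stretched configuration, so the right-to-left scanning order of elevators automatically agrees with the top-down order of the corresponding vertices. Here the $\lambda$-ground elevators are pinned to the abscissae~$p_i$ coming from~$\Pcal_\lambda$, whose $x$-coordinates are unrelated to those of~$\Pcal$, and one must check that these vertical lines can be inserted consistently---that the floor $F_v$ emitting the elevator to~$v_i$ really does acquire a breakpoint at $x=p_i$, and that the reconstructed floors still fit below (resp.\ above) all the points they must avoid. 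This is where the stretching hypothesis~\eqref{eq:vert-stretched}, and the genericity built into Theorem~\ref{c-correspondence-alphabeta}, are used: since the vertical gaps in~$\Pcal$ dominate all horizontal scales and the slopes of floor segments are bounded by~$d$, no floor can accumulate enough height to disturb the vertical ordering, so each fixed abscissa~$p_i$ is accommodated and the down-going elevator from~$F_v$ genuinely reaches the $x$-axis through $(p_i,0)$. Establishing this consistency, together with the well-definedness of the forward map $h\mapsto\tDcal$ (the grounded analogue of Lemma~\ref{lem:floor-stretched}), is the one place where more than a verbatim transcription of the earlier argument is required.
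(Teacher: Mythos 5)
Your proposal is correct and follows precisely the route the paper itself indicates: the paper proves this theorem only by remarking that ``a proof can be given along similar lines'' to Theorem~\ref{th:trop=combin}, and your argument is exactly that adaptation---the same order-preserving vertex--point bijection (with the $v_i$ sent to the fixed points $(p_i,0)$), the same right-to-left floor reconstruction pinned by the divergence/slope condition, and the same multiplicity bookkeeping comparing \eqref{eq:mult-ground} with \eqref{eq:mu-rho}. The one issue you single out as requiring more than a verbatim transcription (reconciling the fixed abscissae of $\Pcal_\lambda$ with the vertically stretched configuration $\Pcal$) is indeed the only genuinely new point, and invoking the stretching condition \eqref{eq:vert-stretched} together with the slope bound is the intended way to dispose of it.
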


Theorems~\ref{c-correspondence-alphabeta} 
and~\ref{th:trop=combin-two-ptns} imply the following
generalization of Theorem~\ref{th:bm-curves}. 
(The case $g=0$ has been stated
in~\cite[Theorem~4.4]{arroyo-brugalle-lopez}.) 

\begin{theorem}
\label{th:combin-rule-two-ptns}
The relative Gromov-Witten invariant $N_{d,g}(\lambda,\rho)$ is given
by 
\begin{equation}
\label{eq:combin-rule-two-ptns}
N_{d,g}(\lambda,\rho)=\sum_{\Dcal}\mu_\rho(\Dcal)\,\nu_{\lambda,\rho}(\Dcal)
\,,
\end{equation}
the sum over all labeled floor diagrams $\Dcal$ of
degree~$d$ and genus~$g$.
\end{theorem}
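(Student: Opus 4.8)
The plan is to derive \eqref{eq:combin-rule-two-ptns} by chaining together the two correspondence results that precede the statement. Theorem~\ref{c-correspondence-alphabeta} converts the enumeration of complex curves with the prescribed tangency data into a weighted count of grounded plane tropical curves, and Theorem~\ref{th:trop=combin-two-ptns} converts that tropical count into a weighted count of $(\lambda,\rho)$-marked floor diagrams. The theorem then follows from a purely bookkeeping regrouping of the sum over markings according to their underlying labeled floor diagrams. In particular, no new geometric or tropical input is needed: the two cited correspondences do all the substantive work, and what remains is a two-stage change of summation variable.

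First I would fix the data as in Theorem~\ref{c-correspondence-alphabeta}: a configuration $\Pcal\subset\RR^2$ of $2d-1+g+\ell(\rho)$ points that is generic and satisfies~\eqref{eq:vert-stretched}, the associated complex configurations $\Pcal^\CC$ and $\Pcal^\CC_\lambda$, and a sufficiently large~$t$ with $\Log_t(\Pcal^\CC)=\Pcal$. Because $\Pcal$, and hence $\Pcal^\CC$, is generic, the number of irreducible complex algebraic curves of degree~$d$ and genus~$g$ passing through~$\Pcal^\CC$ with $(\lambda,\Pcal^\CC_\lambda,\rho)$-tangency to the $x$-axis is exactly the invariant $N_{d,g}(\lambda,\rho)$ by Definition~\ref{def:G-W-two-partitions}. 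Theorem~\ref{c-correspondence-alphabeta} provides a surjection from this set of complex curves onto the set of irreducible $(\lambda,\Pcal_\lambda,\rho)$-grounded plane tropical curves~$h$ of degree~$d$ and genus~$g$ passing through~$\Pcal$, in which the fibre over each~$h$ has exactly $\mu_\rho(h)$ elements. Partitioning the complex curves into the fibres of this surjection and summing the fibre sizes yields
\[
N_{d,g}(\lambda,\rho)=\sum_h \mu_\rho(h),
\]
the sum over all such tropical curves~$h$.

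Next I would invoke Theorem~\ref{th:trop=combin-two-ptns}, which supplies a bijection $h\mapsto\tDcal$ between these tropical curves and the $(\lambda,\rho)$-marked floor diagrams of degree~$d$ and genus~$g$, satisfying $\mu_\rho(h)=\mu_\rho(\tDcal)$. Substituting into the previous display replaces the sum over~$h$ by a sum over markings,
\[
N_{d,g}(\lambda,\rho)=\sum_{\tDcal} \mu_\rho(\tDcal),
\]
over all $(\lambda,\rho)$-marked floor diagrams $\tDcal$. Finally I would regroup according to the underlying labeled diagram: each $\tDcal$ is, by construction, a $(\lambda,\rho)$-marking of a unique labeled floor diagram~$\Dcal$; by~\eqref{eq:mu-rho} the weight $\mu_\rho(\tDcal)=\mu_\rho(\Dcal)$ depends only on~$\Dcal$ and is therefore constant across all markings of a fixed~$\Dcal$; and the number of those markings is $\nu_{\lambda,\rho}(\Dcal)$ by definition. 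Collecting the $\nu_{\lambda,\rho}(\Dcal)$ equal terms attached to each~$\Dcal$ gives $\sum_{\tDcal}\mu_\rho(\tDcal)=\sum_\Dcal \mu_\rho(\Dcal)\,\nu_{\lambda,\rho}(\Dcal)$, which is precisely~\eqref{eq:combin-rule-two-ptns}.

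Since both correspondence theorems are assumed, there is no serious obstacle in this argument; the care required is entirely in the bookkeeping. The two points to verify are that the fibres of the geometric surjection genuinely partition the complex curves, so that summing fibre sizes recovers the total count, and that the multiplicity factors out of the inner sum over markings because it is an invariant of the labeled diagram via~\eqref{eq:mu-rho}. The one place I would watch is the genericity-versus-stretching compatibility in the first step, namely that a configuration obeying~\eqref{eq:vert-stretched} is still generic enough for its complex curve count to equal $N_{d,g}(\lambda,\rho)$; this is precisely what the hypotheses of Theorem~\ref{c-correspondence-alphabeta} guarantee.
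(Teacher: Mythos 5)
Your proposal is correct and is exactly the argument the paper intends: the paper derives Theorem~\ref{th:combin-rule-two-ptns} by the same chaining of Theorem~\ref{c-correspondence-alphabeta} with Theorem~\ref{th:trop=combin-two-ptns}, followed by regrouping the marked diagrams over their underlying labeled floor diagrams via~\eqref{eq:mu-rho}. Your write-up simply makes explicit the bookkeeping that the paper leaves implicit in the phrase ``Theorems~\ref{c-correspondence-alphabeta} and~\ref{th:trop=combin-two-ptns} imply the following generalization.''
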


In other words, the number $N_{d,g}(\lambda,\rho)$ can be computed by
counting $(\lambda,\rho)$-marked (labeled) floor diagrams~$\tDcal$, 
each taken with multiplicity~$\mu_\rho(\tDcal)$.

\pagebreak[4]

\section{Computing relative Gromov-Witten invariants}
\label{sec:Ndg-rel-examples}

This section is devoted to applications of the
combinatorial rule of Theorem~\ref{th:combin-rule-two-ptns}.  

\subsection{Conics and cubics}

As a warm-up, let us look at the cases $d=2$ and $d=3$.
(All of these enumerative invariants have been known since the 19th
century.) 

\begin{example}[\emph{%Relative Gromov-Witten invariants for 
Plane conics}]
For $d=2$ and $g=0$, there are very few possibilities.
By~\eqref{eq:Ndg-11}, we have
\[
N_{2,0}(\emptyset,(1,1))
=N_{2,0}((1),(1))
=N_{2,0}((1,1),\emptyset)
=N_{2,0}=1, 
\]
the unique plane conic through 5~generic points.
This corresponds to the unique
$(\lambda,\rho)$-marked floor diagram of multiplicity~$1$. 
In the cases $(\lambda,\rho)=((2),\emptyset)$ and
$(\lambda,\rho)=(\emptyset,(2))$, the diagram is unique as well;
the multiplicities are $1$ and~$2$, respectively,
so $N_{2,0}((2),\emptyset)=1$ and $N_{2,0}(\emptyset,(2))=2$. 
This accounts~for: 
\begin{itemize}
\item
the unique plane conic passing through $3$ generic 
points and tangent to a given line at a given point; 
\item
two plane conics passing through $4$ given
points and tangent to a given line. 
\end{itemize}
\end{example}

\begin{example}[\emph{%Relative Gromov-Witten invariants for 
Elliptic plane cubics}]
For $d=3$ and $g=1$, 
there is only one labeled floor diagram,
so combinatorial calculations are very simple. 
Applying Theorem~\ref{th:combin-rule-two-ptns}, we see that all
relative Gromov-Witten invariants $N_{3,1}(\lambda,\rho)$ are equal
to~$1$, except~for: 

$N_{3,1}((1),(2))=2$ 
(plane cubics passing through $7$ generic 
points and having two distinct common points with a given line,
namely a given intersection point and an unspecified 
point of tangency);

$N_{3,1}(\emptyset,(2,1))=4$
(plane cubics passing through $8$ generic 
points and tangent to a given line; cf.\ Figure~\ref{fig:two-trop-conics});

$N_{3,1}(\emptyset,(3))=3$
(plane cubics passing through 7 generic points and having an
unspecified point of order-3 tangency to a given line). 
\end{example}

\begin{example}[\emph{%Relative Gromov-Witten invariants for 
Rational plane cubics}]
There are $3$ labeled floor diagrams~$\Dcal$ of degree $d=3$ and genus
$g=0$.
By~\eqref{eq:Ndg-11}, we have
\[
N_{3,0}(\emptyset,(1,1,1))
=N_{3,0}((1),(1,1))
=N_{3,0}((1,1),(1))
=N_{3,0}=12 
\]
(12~rational plane cubics through 8 generic points);
the combinatorial calculation is the same as in Example~\ref{example:N30}. 
The remaining cases are presented in~Figure~\ref{fig:d=3,g=0}. 
For each labeled floor diagram~$\Dcal$ and each pair of partitions
$(\lambda,\rho)$, the table shows the corresponding contribution 
$\mu_\rho(\Dcal)\cdot\nu_{\lambda,\rho}(\Dcal)$
to the right-hand side of~\eqref{eq:combin-rule-two-ptns}. 
These contributions are then added together to
get~$N_{\lambda,\rho}$.
For example, there are $N_{3,0}(\emptyset,(3))=21$ rational cubics passing
through 6~generic points and having tangency of order~3 to a given line
(at an unspecified point of inflection). 
\end{example}

\newcommand{\lrho}[2]{
\hspace{-.05in}\begin{tabular}{l}\hspace{-.01in}$\lambda\!=\!#1$\\ $\rho\!=\!#2$\end{tabular}
\hspace{-.13in}}

\begin{figure}[htbp]
\begin{center}
\begin{tabular}{c|c|c|c|c|c|c|c}
%$\Dcal$ 
& \lrho{\emptyset}{(2,1)} & \lrho{\emptyset}{(3)} & \lrho{(1)}{(2)} 
& \lrho{(2)}{(1)} & \lrho{\langle1^3\rangle}{\emptyset} 
& \lrho{(2,1)}{\emptyset} & \lrho{(3)}{\emptyset} 
\\
\hline \hline
&&&&&&&\\[-.15in]
\begin{picture}(55,8)(0,-3)\setlength{\unitlength}{2.5pt}\thicklines
\ooo\Eee\eEe
\end{picture}
& $2\cdot4$ & $3\cdot0$ & $2\cdot1$ & $1\cdot3$ & $1\cdot3$ &
$1\cdot1$ & $1\cdot0$\\[.2in] 
\begin{picture}(55,8)(0,-3)\setlength{\unitlength}{2.5pt}\thicklines
\ooo\Eee\eEe
\put(15,2){\makebox(0,0){$\scriptstyle 2$}}
\end{picture}
& $8\cdot2$ & $12\cdot1$ & $8\cdot1$ & $4\cdot1$ & $4\cdot1$ &
$4\cdot1$ & $4\cdot1$\\[.2in] 
\begin{picture}(55,8)(0,-3)\setlength{\unitlength}{2.5pt}\thicklines
\ooo\EEe\eEe
\end{picture}
& $2\cdot6$ & $3\cdot3$ & $2\cdot3$ & $1\cdot3$ & $1\cdot3$ &
$1\cdot3$ & $1\cdot3$ 
\\
\hline \hline
&&&&&&&\\[-.1in]
$N_{3,0}(\lambda,\rho)$& 36& 21& 16 & 10 & 10 & 8 & 7
\\[-.05in]
\end{tabular}
\end{center}
\caption{Combinatorial computation of 
%relative Gromov-Witten invariants 
the numbers $N_{3,0}(\lambda,\rho)$}
\label{fig:d=3,g=0}
\end{figure}

%For a marking $\tDcal$ of a labeled floor diagram~$\Dcal$, we denote
%$\mu(\tDcal)=\mu(\Dcal)$. 

\subsection{Nonsingular and uninodal curves}
\label{sec:uninodal}
Nonsingular (or generic) plane algebraic curves have the
maximal possible genus among all curves of degree~$d$, namely
\[
g_{\max}=g_{\max}(d)=\frac{(d-1)(d-2)}{2}.
\]
There is only one labeled floor diagram $\Dcal$ of degree~$d$ and
genus~$g_{\max}$, namely one that looks like this 
(shown for $d=6$ and $g_{\max}=10$):
\begin{equation}
\begin{picture}(60,14)(30,-5)
\setlength{\unitlength}{3pt}
\thicklines
\multiput(0,0)(10,0){6}{\circle{2}}
\Eee\eOe\eeO\eeE
\qbezier(30.8,0.6)(35,4)(39.2,0.6)
\qbezier(30.8,-0.6)(35,-4)(39.2,-0.6)
\qbezier(30.95,0.3)(35,1.5)(39.05,0.3)
\qbezier(30.95,-0.3)(35,-1.5)(39.05,-0.3)

\put(41,0){\line(1,0){8}}
\qbezier(40.8,0.6)(45,4)(49.2,0.6)
\qbezier(40.8,-0.6)(45,-4)(49.2,-0.6)
\qbezier(40.95,0.3)(45,2)(49.05,0.3)
\qbezier(40.95,-0.3)(45,-2)(49.05,-0.3)
\end{picture}
\end{equation}
For this floor diagram, we have $\mu(\Dcal)=\nu(\Dcal)=1$,
implying $N_{d,g_{\max}}=1$ (cf.~\eqref{eq:Nd,gmax}). 
For a pair of partitions $\lambda$ and $\rho$ with 
$|\lambda|+|\rho|=d$, the number of markings 
$\nu_{\lambda,\rho}(\Dcal)$ is equal to the number $o(\rho)$ of distinct
permutations of the parts of~$\rho$.
That is, if 
\begin{equation}
\label{eq:rho-exp}
\rho=\langle 1^{\beta_1} \,2^{\beta_2}\cdots\rangle
\end{equation}
(cf.~\eqref{eq:lambda-exp}),
then 
\begin{equation}
\label{eq:nu-gmax}
\nu_{\lambda,\rho}(\Dcal)
=o(\rho)=\frac{(\ell(\rho))!}{\beta_1!\,\beta_2!\cdots}
\end{equation}
Combining \eqref{eq:nu-gmax} with~\eqref{eq:mu-rho}
and~\eqref{eq:combin-rule-two-ptns}, we obtain the following formula.

\begin{corollary}
\label{cor:nonsing-rel}
Let $\lambda$ and $\rho$ be partitions such that 
$|\lambda|+|\rho|=d$. 
Then the number of %(nonsingular) 
irreducible plane complex algebraic
curves of degree~$d$ passing through a generic configuration of
$\frac{d(d+1)}{2}+\ell(\rho)$ points and having tangencies to a given
line described by $\lambda$ and~$\rho$
(that is, $\lambda$~describes tangencies at given points
whereas $\rho$~describes tangencies at unspecified points)
is equal to
\begin{equation}
\label{eq:Ndg-max-lr}
N_{d,g_{\max}}(\lambda,\rho)
=\rho_1\,\rho_2\cdots \frac{(\ell(\rho))!}{\beta_1!\,\beta_2!\cdots}\,. 
\end{equation}
\end{corollary}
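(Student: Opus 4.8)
The plan is to apply Theorem~\ref{th:combin-rule-two-ptns} and exploit the fact that the sum over labeled floor diagrams collapses to a single term. First I would record the structure of the unique degree-$d$, genus-$g_{\max}$ diagram~$\Dcal$ displayed above: its vertices $1,\dots,d$ are joined only along the path, with exactly $i$ parallel unit-weight edges from~$i$ to~$i+1$, so that $\divv(v)=1$ for $v<d$ while $\divv(d)=-(d-1)$. As every edge has weight~$1$, formula~\eqref{eq:mult-D} gives $\mu(\Dcal)=1$, and hence $\mu_\rho(\Dcal)=\rho_1\rho_2\cdots$ by~\eqref{eq:mu-rho}. It then remains to verify~\eqref{eq:nu-gmax}, namely $\nu_{\lambda,\rho}(\Dcal)=o(\rho)$; the corollary follows by multiplying these two quantities, as dictated by~\eqref{eq:combin-rule-two-ptns}.

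The heart of the matter is the marking count, and here I would trace the $(\lambda,\rho)$-marking procedure carefully. Since $1-\divv(v)=0$ for every $v<d$, no new edges are introduced at those vertices in Step~1; all new edges emanate from the single bottom vertex~$d$, whose deficiency $1-\divv(d)=d$ exactly absorbs $|\lambda|+|\rho|$. Thus there are $\ell(\lambda)$ edges from~$d$ to the special vertices $v_1,\dots,v_{\ell(\lambda)}$ and $\ell(\rho)$ edges from~$d$ to fresh sinks carrying weights~$\rho$. After Step~2, the midpoint inserted on a parallel edge from~$i$ to~$i+1$ is forced into the gap between~$i$ and~$i+1$ in any admissible linear order, and since all such midpoints are interchangeable by an automorphism fixing~$V$, they contribute nothing. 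The only freedom therefore lives above~$d$.

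I would then separate the rigid $\lambda$-part from the flexible $\rho$-part. The vertices $v_1,\dots,v_{\ell(\lambda)}$ are pinned to the top of the order by Step~3 and carry the prescribed weights $\lambda_1\ge\lambda_2\ge\cdots$ attached to their fixed positions, so they admit no reordering and contribute a factor of~$1$. The $\ell(\rho)$ sinks attached to~$d$, on the other hand, occupy the band strictly between~$d$ and the $v_i$-block, and may be linearly ordered there in $\ell(\rho)!$ ways; two orders give the same marking precisely when they differ by permuting sinks of equal weight, which is exactly an automorphism fixing~$V$. Writing $\rho=\langle1^{\beta_1}2^{\beta_2}\cdots\rangle$ as in~\eqref{eq:rho-exp}, this yields $\ell(\rho)!/(\beta_1!\,\beta_2!\cdots)=o(\rho)$ distinct markings, which is~\eqref{eq:nu-gmax}. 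Combining with $\mu_\rho(\Dcal)=\rho_1\rho_2\cdots$ produces~\eqref{eq:Ndg-max-lr}.

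The step I expect to demand the most care is the automorphism bookkeeping in the marking count: one must confirm that orderings of $\rho$-sinks of unequal weight are genuinely inequivalent (so that none are spuriously identified), and that the pinning of the $v_i$ really does preclude any $\lambda$-dependent factor. Everything else---the uniqueness and explicit shape of~$\Dcal$, and the evaluation $\mu(\Dcal)=1$---is immediate from the structure already exhibited.
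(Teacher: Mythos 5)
Your proof is correct and takes essentially the same route as the paper: identify the unique labeled floor diagram of degree~$d$ and genus~$g_{\max}$, note $\mu(\Dcal)=1$ so that $\mu_\rho(\Dcal)=\rho_1\rho_2\cdots$ by~\eqref{eq:mu-rho}, establish $\nu_{\lambda,\rho}(\Dcal)=o(\rho)$ as in~\eqref{eq:nu-gmax}, and conclude via~\eqref{eq:combin-rule-two-ptns}. The only difference is that you spell out the verification of~\eqref{eq:nu-gmax} (the forced midpoints interchangeable under automorphisms, the pinned vertices $v_1,\dots,v_{\ell(\lambda)}$, and the $\ell(\rho)!/(\beta_1!\,\beta_2!\cdots)$ inequivalent orderings of the $\rho$-sinks), which the paper asserts without detailed justification.
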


Note that this number does not depend on~$\lambda$. 

\medskip

Let us now turn to counting \emph{uninodal} curves, i.e., those of genus 
\[
g=g_{\max}-1=\frac{d(d-3)}{2} \qquad (d\ge3).
\]
It is easy to see that there are $2d-3$ labeled floor diagrams of
degree~$d$ and genus~$g_{\max}-1$
(cf.~\cite[Proposition~6.1]{brugalle-mikhalkin08});
the $5$~diagrams for $d=4$ and $g=2$ are shown in 
the second table of Appendix~A. 
More specifically, there are $d-1$ diagrams 
$\Dcal_1,\dots,\Dcal_{d-1}$ with $\mu(\Dcal_i)=1$
and $\nu(\Dcal_i)=2i+1$; 
and $d-2$ additional diagrams 
$\Dcal'_1,\dots,\Dcal'_{d-2}$ with $\mu(\Dcal'_i)=4$
and $\nu(\Dcal'_i)=i$. 
This gives
\begin{equation}
\label{eq:3(d-1)^2}
N_{d,g_{\max}-1}=(3+5+\cdots+(2d-1))+4(1+2+\cdots+(d-2))
%=d^2-1+2(d-2)(d-1)
=3(d-1)^2;
\end{equation}
cf.~\eqref{eq:Nd,gmax-1}. 

We next generalize the formula \eqref{eq:3(d-1)^2} to the setting
involving tangency conditions. 

\begin{corollary}
\label{cor:uninodal-rel}
Let $\lambda=\langle1^{\alpha_1} \,2^{\alpha_2}\cdots\rangle$ and 
$\rho=\langle 1^{\beta_1} \,2^{\beta_2}\cdots\rangle$ be partitions such that 
$|\lambda|+|\rho|=d$. 
Then the number of uninodal irreducible plane complex algebraic
curves of degree~$d$ passing through a generic configuration of
$\frac{(d-1)(d+2)}{2}+\ell(\rho)$ points and having tangencies to a given
line described by $\lambda$ and~$\rho$
%(that is, $\lambda$~describes tangencies at given points
%whereas $\rho$~describes tangencies at unspecified points)
is given by the formula 
\begin{equation*}
N_{d,g_{\max}-1}(\lambda,\rho)
=%\rho_1\,\rho_2\cdots
 %\frac{(\ell(\rho))!}{\beta_1!\,\beta_2!\cdots}\,. 
\begin{cases}
\Bigl(%3d^2-8d+4
(d\!-\!2)(3d\!-\!2)+\alpha_1+\beta_1+(d\!-\!1)\dfrac{\beta_1}{\ell(\rho)}
\Bigr)
  N_{d,g_{\max}}(\lambda,\rho) & \text{if $\rho\neq\emptyset$;}\\
\hspace{.1in}%3d^2-8d+4
(\!d-\!2)(3d\!-\!2)+\alpha_1& \text{if $\rho=\emptyset$. }
\end{cases}
\end{equation*}
(Recall that $N_{d,g_{\max}}(\lambda,\rho)$ is given
by~\eqref{eq:Ndg-max-lr}.)
\end{corollary}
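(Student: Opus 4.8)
The plan is to evaluate the right-hand side of the combinatorial rule~\eqref{eq:combin-rule-two-ptns} of Theorem~\ref{th:combin-rule-two-ptns}, that is, $N_{d,g_{\max}-1}(\lambda,\rho)=\sum_{\Dcal}\mu_\rho(\Dcal)\,\nu_{\lambda,\rho}(\Dcal)$, over the $2d-3$ labeled floor diagrams of degree~$d$ and genus~$g_{\max}-1$ classified just above. Since $\mu_\rho(\Dcal)=\mu(\Dcal)\prod_i\rho_i$ by~\eqref{eq:mu-rho} and $N_{d,g_{\max}}(\lambda,\rho)=(\prod_i\rho_i)\,o(\rho)$ by Corollary~\ref{cor:nonsing-rel}, the factor $\prod_i\rho_i$ cancels out, and the claim for $\rho\neq\emptyset$ reduces to the identity
\[
\sum_{\Dcal}\mu(\Dcal)\,\nu_{\lambda,\rho}(\Dcal)
=\Bigl((d-2)(3d-2)+\alpha_1+\beta_1+(d-1)\tfrac{\beta_1}{\ell(\rho)}\Bigr)\,o(\rho).
\]
Thus the entire problem becomes the computation of $\nu_{\lambda,\rho}(\Dcal)$ for each of the listed diagrams.

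The first thing I would record is that every one of these diagrams \emph{except} the ``chain'' diagram~$\Dcal_{d-1}$ has the same vertex divergences as the maximal-genus diagram, so that all new edges created in Step~1 of the $(\lambda,\rho)$-marking emanate from the last vertex. Indeed, the $d-2$ diagrams~$\Dcal'_i$ (carrying one weight-$2$ edge) and the skip diagrams $\Dcal_1,\dots,\Dcal_{d-2}$ (carrying one length-$2$ edge $i\to i+2$) all realize the maximal flow profile, so $\divv(v)=1$ for $v<d$ and only vertex~$d$ emits new edges. For these, the $\lambda$- and $\rho$-leaves attach to~$d$ exactly as in Corollary~\ref{cor:nonsing-rel}, contributing a factor $o(\rho)$, while the single ``defect'' midpoint (of the weight-$2$ edge, resp.\ of the skip) lies strictly below~$d$ and orders itself independently, contributing exactly the ordinary marking number. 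Hence $\nu_{\lambda,\rho}(\Dcal_i)=(2i+1)\,o(\rho)$ for $i\le d-2$ and $\nu_{\lambda,\rho}(\Dcal'_i)=i\,o(\rho)$, whose combined contribution is $o(\rho)\bigl(\sum_{i=1}^{d-2}(2i+1)+4\sum_{i=1}^{d-2}i\bigr)=o(\rho)(d-2)(3d-2)$.

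The heart of the argument, and the step I expect to be the main obstacle, is the chain diagram~$\Dcal_{d-1}$: its profile drops at the last step, so vertex~$d-1$ also emits a single new edge, necessarily of weight~$1$, to a vertex~$z$, and $d-2$ mutually indistinguishable weight-$1$ midpoints sit between $d-1$ and~$d$. I would split $\nu_{\lambda,\rho}(\Dcal_{d-1})=Q+P$ according to whether $z$ is a weight-$1$ $\lambda$-leaf or a weight-$1$ $\rho$-leaf. For~$Q$ there are $\alpha_1$ choices of which weight-$1$ $\lambda$-sink receives this edge, the remaining leaves attach to~$d$ as before, and nothing else moves, so $Q=\alpha_1\,o(\rho)$. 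For~$P$ the crucial subtlety is that the weight-$1$ leaf~$w$ hanging from~$d-1$ is \emph{distinguishable} from the weight-$1$ leaves hanging from~$d$, since markings are taken only up to automorphisms fixing~$V$; I would then count its positions in two regimes. When $w$ lies below~$d$ it occupies one of $d-1$ slots among the $d-2$ identical midpoints while the remaining $\rho$-leaves order themselves in $o(\rho')$ ways; when $w$ lies above~$d$ it is inserted, as a distinguished element, among the other $\ell(\rho)-1$ $\rho$-leaves in $\ell(\rho)\,o(\rho')$ ways. Here $\rho'$ denotes $\rho$ with one part equal to~$1$ removed, so $P=(d-1+\ell(\rho))\,o(\rho')$.

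Finally I would simplify using the elementary identity $o(\rho')=\frac{\beta_1}{\ell(\rho)}\,o(\rho)$, obtaining
\[
\nu_{\lambda,\rho}(\Dcal_{d-1})=Q+P
=\Bigl(\alpha_1+\beta_1+(d-1)\tfrac{\beta_1}{\ell(\rho)}\Bigr)o(\rho),
\]
add this to the $o(\rho)(d-2)(3d-2)$ contribution of the other diagrams, and reinstate $\prod_i\rho_i$ to recognize $N_{d,g_{\max}}(\lambda,\rho)$ and read off the stated formula. The degenerate case $\rho=\emptyset$ emerges from the same bookkeeping, with $P=0$ and $Q=\alpha_1$. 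The only point demanding genuine care throughout is the correct identification of which leaves are interchangeable under the $V$-fixing automorphisms: conflating the weight-$1$ leaf at~$d-1$ with those at~$d$ would wipe out precisely the $\beta_1/\ell(\rho)$ term.
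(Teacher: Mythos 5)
Your proposal is correct and follows essentially the same route as the paper's proof: enumerate the $2d-3$ diagrams $\Dcal_1,\dots,\Dcal_{d-1},\Dcal'_1,\dots,\Dcal'_{d-2}$, compute $\mu_\rho$ and $\nu_{\lambda,\rho}$ for each, and sum via~\eqref{eq:combin-rule-two-ptns}, using the identity $o(\bar\rho)=\frac{\beta_1}{\ell(\rho)}\,o(\rho)$ to handle the chain diagram. In fact, your case analysis for $\nu_{\lambda,\rho}(\Dcal_{d-1})$ (the split into the $\alpha_1$-term and the two placement regimes for the distinguished weight-$1$ leaf at $d-1$) correctly supplies the details that the paper compresses into ``calculations give.''
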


\begin{proof}
We need to compute the quantities $\mu_\rho$ and $\nu_{\lambda,\rho}$
for each of the diagrams $\Dcal_i$ and~$\Dcal'_i$,
then use~\eqref{eq:combin-rule-two-ptns}. 
First, a little preparation. If $\beta_1\ge1$, then denote 
\[
\bar\rho=\langle 1^{\beta_1-1}
\,2^{\beta_2}\,3^{\beta_3}\cdots\rangle,  
\]
so that (cf.~\eqref{eq:nu-gmax}) 
\[
o(\bar\rho)=\frac{(\ell(\rho)-1)!}{(\beta_1-1)!\,\beta_2!\cdots}
=\frac{\beta_1}{\ell(\rho)}\,o(\rho).  
\]
Now, calculations give (cf.~\eqref{eq:nu-gmax}):
\begin{align*}
\mu_\rho(\Dcal_i)&=\rho_1\rho_2\cdots,\\
\mu_\rho(\Dcal'_i)&=4\rho_1\rho_2\cdots,\\
\nu_{\lambda,\rho}(\Dcal_i)&
            =(2i+1)\, o(\rho) 
	    \qquad (i\leq d-2),\\
\nu_{\lambda,\rho}(\Dcal_{d-1})&
            =(d-1)\,\frac{\beta_1}{\ell(\rho)}\,o(\rho)
	    +(\alpha_1+\beta_1)\,o(\rho),\\ 
\nu_{\lambda,\rho}(\Dcal'_i)&=i\,o(\rho)\qquad (i\leq d-2). 
\end{align*}
Collecting everything, we get
\begin{align*}
N_{d,g_{\max}-1}(\lambda,\rho)
&=\rho_1\rho_2\cdots\,o(\rho)\biggl(\sum_{i=1}^{d-2}(6i+1)
+(d-1)\,\frac{\beta_1}{\ell(\rho)}+\alpha_1+\beta_1
\biggr)\\
&=N_{d,g_{\max}}(\lambda,\rho)(3d^2-8d+4+(d-1)\,
\frac{\beta_1}{\ell(\rho)}+\alpha_1+\beta_1).
\qedhere
\end{align*}
\end{proof}

\subsection{Curves passing through collinear points}
Let us next consider the cases $(\lambda,\rho)=(\langle
1^{k}\rangle,\langle1^{d-k}\rangle)$ discussed in
Remarks~\ref{rem:mfd-rho-simple} and~\ref{rem:Ndg-simple}. 
Combining the latter with Theorem~\ref{th:combin-rule-two-ptns},
we obtain the following corollary. 

\begin{corollary} 
\label{cor:Ndg-sinks}
The number $N_{d,g}(\langle 1^{k}\rangle,\langle1^{d-k}\rangle)$ 
of irreducible plane complex algebraic curves of degree~$d$
and genus~$g$ passing through a generic configuration of 
$3d+g-k-1$ points and a generic
configuration of $k$ collinear points is equal to
$\sum_{\tDcal} \mu(\tDcal)$, 
the sum over marked floor diagrams $\tDcal$ of
degree~$d$ and genus~$g$ whose last $k$ vertices are sinks. 
\end{corollary}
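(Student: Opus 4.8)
The plan is to obtain this statement as a direct specialization of Theorem~\ref{th:combin-rule-two-ptns} to the partitions $\lambda=\langle1^{k}\rangle$ and $\rho=\langle1^{d-k}\rangle$, followed by a reindexing of the resulting sum. First I would apply that theorem, which expresses
\[
N_{d,g}(\langle1^{k}\rangle,\langle1^{d-k}\rangle)
=\sum_\Dcal \mu_\rho(\Dcal)\,\nu_{\langle1^{k}\rangle,\langle1^{d-k}\rangle}(\Dcal),
\]
the sum being over labeled floor diagrams~$\Dcal$ of degree~$d$ and genus~$g$. The next step is to simplify the multiplicity: since every part of $\rho=\langle1^{d-k}\rangle$ equals~$1$, the factor $\prod_i\rho_i$ in the definition~\eqref{eq:mu-rho} of $\mu_\rho$ is~$1$, so that $\mu_\rho(\Dcal)=\mu(\Dcal)$ for every diagram.

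Second, I would identify the objects counted by $\nu_{\langle1^{k}\rangle,\langle1^{d-k}\rangle}$. By Remark~\ref{rem:mfd-rho-simple}, for this choice of $(\lambda,\rho)$ a $(\lambda,\rho)$-marking of~$\Dcal$ is nothing but an ordinary marking, in the sense of Definition~\ref{def-marking}, whose last $k$ vertices are sinks. Thus $\nu_{\langle1^{k}\rangle,\langle1^{d-k}\rangle}(\Dcal)$ equals the number of ordinary markings~$\tDcal$ of~$\Dcal$ with this sink property, each carrying multiplicity $\mu(\tDcal)=\mu(\Dcal)$.

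Combining these two observations, the weighted sum over diagrams collapses into a single weighted sum over their admissible markings: running over all~$\Dcal$ and, for each, over all ordinary markings~$\tDcal$ whose last $k$ vertices are sinks, turns the double sum into $\sum_{\tDcal}\mu(\tDcal)$, exactly as claimed. The geometric side is then supplied by Remark~\ref{rem:Ndg-simple}: the data $\lambda=\langle1^{k}\rangle$ prescribes passage through $k$ given points of the line with order-one tangency, while $\rho=\langle1^{d-k}\rangle$ imposes only the transverse intersections with the line that are forced by Bezout; hence $N_{d,g}(\langle1^{k}\rangle,\langle1^{d-k}\rangle)$ is precisely the count of irreducible degree-$d$ genus-$g$ curves through $3d+g-k-1$ generic points together with $k$ generic collinear points.

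I do not expect a genuine obstacle here, as the statement is a bookkeeping consequence of Theorem~\ref{th:combin-rule-two-ptns}. The two places deserving care are the normalization of the multiplicity---checking that $\prod_i\rho_i=1$ so that $\mu_\rho$ reduces to the ordinary complex multiplicity~$\mu$---and the faithful translation, via Remark~\ref{rem:Ndg-simple}, of the relative tangency conditions encoded by $(\langle1^{k}\rangle,\langle1^{d-k}\rangle)$ into the incidence condition of passing through $k$ collinear points.
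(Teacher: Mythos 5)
Your proposal is correct and follows exactly the paper's own route: the paper obtains this corollary by combining Theorem~\ref{th:combin-rule-two-ptns} with Remark~\ref{rem:mfd-rho-simple} (identifying $(\langle1^{k}\rangle,\langle1^{d-k}\rangle)$-markings with ordinary markings whose last $k$ vertices are sinks) and Remark~\ref{rem:Ndg-simple} (the geometric reinterpretation as incidence with $k$ collinear points), just as you do. Your additional check that $\mu_\rho(\Dcal)=\mu(\Dcal)$ because all parts of $\rho$ equal~$1$ is the same normalization implicit in the paper's use of~\eqref{eq:mu-rho}.
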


%\begin{proof}
%Combine Theorem~\ref{th:combin-rule-two-ptns}
%(for $\lambda=\langle1^k\rangle$, $\rho=\langle1^{d-k}\rangle$) 
%with Remarks~\ref{rem:mfd-rho-simple} and~\ref{rem:Ndg-simple}. 
%\end{proof}

The special cases $k=0,1,2$ of Corollary~\ref{cor:Ndg-sinks}  
yield the ordinary Gromov-Witten numbers (cf.~\eqref{eq:Ndg-11}). 
Let us examine the case $k=3$. 

\begin{corollary} 
\label{cor:collinear-triple}
The number %$N_{d,g}(\langle 1^{3}\rangle,\langle1^{d-3}\rangle)$
of irreducible plane complex algebraic curves of degree~$d$
and genus~$g$ passing through a generic configuration of 
$3d+g-4$ points and through a generic
triple of collinear points is 
given by the formula 
\[
N_{d,g}(\langle 1^{3}\rangle,\langle
1^{d-3}\rangle)=N_{d,g}-(d-1)N_{d-1,g}\,. 
\]
\end{corollary}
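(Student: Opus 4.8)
The plan is to interpret both sides through the combinatorial rule and reduce the identity to a weighted bijection among marked floor diagrams. By Theorem~\ref{th:bm-curves}, $N_{d,g}=\sum_{\tDcal}\mu(\tDcal)$, the sum over all marked floor diagrams of degree~$d$ and genus~$g$; by Corollary~\ref{cor:Ndg-sinks} (with $k=3$) together with Remark~\ref{rem:mfd-rho-simple}, $N_{d,g}(\langle1^3\rangle,\langle1^{d-3}\rangle)=\sum_{\tDcal}\mu(\tDcal)$, the sum now restricted to those markings whose last three vertices are sinks. Writing $n=3d+g-1$ for the number of vertices of $\tDcal$, the difference $N_{d,g}-N_{d,g}(\langle1^3\rangle,\langle1^{d-3}\rangle)$ is therefore the $\mu$-weighted count of markings in which at least one of the vertices $n-2,n-1,n$ fails to be a sink. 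First I would record the elementary fact that no white vertex and no edge-midpoint can ever be a sink (a white vertex of divergence $1$ has an outgoing edge, and one of smaller divergence carries a leaf), so every sink is one of the weight-$1$ leaves created in Step~1 of Definition~\ref{def-marking}. Using connectedness of $\Dcal$ and $d\ge3$, I would then check that the two largest vertices $n-1,n$ are always such leaves; hence the difference counts exactly the \emph{bad} markings, those in which the third-largest vertex $n-2$ is not a sink.

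The core is a structural description of the bad markings. I would show that the vertex occupying position $n-2$ is forced to be the maximal white vertex $F_d$, that $\divv(F_d)=-1$ (so $F_d$ receives a single incoming edge of weight~$1$ and emits none), and that the two leaves of $F_d$ occupy the bottom positions $n-1,n$. The point is that any outgoing edge of a vertex placed at $n-2$ would have to land on a leaf (position $n-1$ or $n$), which is impossible unless that vertex is itself the parent of such a leaf; chasing the divergence and leaf-count bookkeeping, and invoking connectedness to exclude an isolated bottom floor, pins down exactly this configuration. In particular every edge incident to the bottom structure has weight~$1$, so $\mu$ is unaffected by the surgery below.

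The bijection then proceeds by \emph{removing the bottom floor}. Given a bad marking $\tDcal$ of degree $d$, I delete $F_d$ and its two leaves; the midpoint of the unique edge feeding $F_d$ loses its lower neighbor and becomes a new weight-$1$ leaf of the parent floor $u$. I would verify that the result is a genuine marked floor diagram $\tDcal'$ of degree $d-1$ and genus $g$: deleting a pendant vertex leaves the first Betti number unchanged, and $\divv(u)$ drops by $1$ (staying $\le1$), so the underlying labeled floor diagram remains admissible, with $\mu(\tDcal')=\mu(\tDcal)$ since only weight-$1$ edges are touched. Conversely, from a degree-$(d-1)$ marking one reconstructs a bad degree-$d$ marking by selecting one of its $d-1$ unbounded-elevator positions (leaves), promoting that leaf to the midpoint of a new weight-$1$ edge, placing $F_d$ at position $n-2$, and hanging two new leaves at $n-1,n$. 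Summing $\mu$ over the fibers yields $(d-1)\,N_{d-1,g}$, which is the asserted difference.

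The main obstacle I anticipate is the fiber count, and specifically a subtlety about symmetry. Two leaves sharing a parent are interchangeable \emph{as a marking}, so one might fear that the $d-1$ leaves of $\tDcal'$ collapse into fewer reconstruction choices. The resolution, which I would spell out carefully, is that the choice being made is of a leaf \emph{position}, not of a leaf up to symmetry: once a leaf is promoted to a midpoint carrying an edge down to $F_d$, it ceases to be interchangeable with the sibling that remains a leaf, so distinct positions yield genuinely distinct bad markings. Verifying that this produces precisely $d-1$ preimages, and that the two maps are mutually inverse and multiplicity-preserving, is the crux. The case $d=3$, $g=0$ is a reassuring check: there the unique degree-$2$ marking lifts to exactly the two bad degree-$3$ markings (coming from the path diagram with $\divv(F_3)=-1$), reproducing $N_{3,0}-2N_{2,0}=12-2=10$ in agreement with Figure~\ref{fig:d=3,g=0}.
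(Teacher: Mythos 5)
Your proposal is correct and follows essentially the same route as the paper's proof: reduce via Corollary~\ref{cor:Ndg-sinks} with $k=3$, show that a marking whose third-largest vertex is not a sink must have its top floor carrying a single weight-$1$ incoming edge and two leaves in the last two positions, and remove that floor to obtain a multiplicity-preserving bijection with degree-$(d-1)$ markings equipped with a distinguished sink, of which there are exactly $d-1$. The only additions are your explicit justifications of the forced local structure and of the sibling-leaf symmetry issue in the fiber count, both of which the paper leaves implicit.
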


For example, there are 
$N_{3,0}(\langle 1^{3}\rangle,\emptyset)=N_{3,0}-2N_{2,0}=10$
irreducible plane rational cubics passing through 5 generic points and
3 generic collinear points---matching the value in
Figure~\ref{fig:d=3,g=0}. 

\begin{proof}
Apply Corollary~\ref{cor:Ndg-sinks} with $k=3$. 
Note that in every (ordinary) marked floor diagram,
the last two vertices are sinks. 
Hence $N_{d,g}(\langle 1^{3}\rangle,\langle1^{d-3}\rangle)$ 
is equal to $N_{d,g}$
minus $\sum_{\tDcal} \mu(\tDcal)$, 
the sum over marked floor diagrams $\tDcal$ of
degree~$d$ and genus~$g$ in which the 3rd largest vertex~$v$ is not a
sink. Such a diagram $\tDcal$ looks like this
(we only draw the edges of interest to us): 
\begin{center}
\begin{picture}(80,35)(30,-15)
\setlength{\unitlength}{4pt}\thicklines
\multiput(0,0)(10,0){2}{\circle{2}}
\multiput(25,0)(10,0){1}{\circle{2}}
\multiput(5,0)(10,0){2}{\circle*{2}}
\multiput(20,0)(10,0){2}{\circle*{2}}
\multiput(45,0)(10,0){1}{\circle*{2}}
\put(35,0){\circle*{2}}
\put(40,0){\circle{2}}
\put(50,0){\circle*{2}}
\put(41,0){\line(1,0){3}}
\put(42.5,0){\vector(1,0){1}}

\qbezier(10.8,0.6)(15,4)(19.2,0.6)
\put(17.5,1.75){\vector(2,-1){1}}
\qbezier(20.8,0.6)(30,6.5)(39.2,0.6)
\put(37.5,1.75){\vector(2,-1){1}}
\qbezier(40.8,0.6)(45,4)(49.2,0.6)
\put(47.5,1.75){\vector(2,-1){1}}
\put(40,-2.5){\makebox(0,0){$v$}}
\put(20,-2.5){\makebox(0,0){$u$}}

\end{picture}
\end{center}
\noindent
(Since there are two edges pointing away from~$v$, there must be a
unique edge $u\to v$ arriving at~$v$, by the divergence condition.)
Removing the three edges incident to~$v$ creates a marked floor
diagram $\tDcal'$ of degree~$d-1$ and genus~$g$, with a distinguished sink~$u$.
Conversely, given such a marked diagram~$\tDcal'$ with a sink~$u$ in it,
$\tDcal$~is uniquely recovered. Note that each~$\tDcal'$ has $d-1$
sinks. Furthermore, $\mu(\tDcal')=\mu(\Dcal)$, and the claim follows. 
\end{proof}

%Compute $N_{d,g}(\langle 1^{4}\rangle,\langle 1^{d-4}\rangle)$

%[This can hardly be derived from Caporaso-Harris] 

%[try $\lambda$ arbitrary, $\rho=\emptyset$ (unclear if this can be done)] 

\subsection{%Enumeration of 
Curves with maximal tangency to a given line}
%\label{sec:max-tangency}
%
Let us now look at the problem of counting irreducible plane
curves of degree~$d$ and genus~$g$ passing through 
appropriately many points and having maximal
tangency (of order~$d$) to a given line.  
The corresponding relative Gromov-Witten invariants come in two flavors,
depending on whether the point of tangency is prescribed or not:
\begin{itemize}
\item
$N_{d,g}((d),\emptyset)$ is the number of irreducible plane
curves of degree~$d$ and genus~$g$ which pass through a generic
configuration of $2d+g-1$ points and have tangency of order~$d$ to a
given line~$L$ at a given point $x\in L$; 

\item 
$N_{d,g}(\emptyset,(d))$ is the number of irreducible plane
curves of degree~$d$ and genus~$g$ which pass through a generic
configuration of $2d+g$ points and have tangency of order~$d$ to a given
line~$L$ at some point $x\in L$. 
\end{itemize}
One surprising corollary of Theorem~\ref{th:combin-rule-two-ptns}
is that these two numbers are related to each other in a very simple
way. 

\begin{corollary}
\label{cor:Nd00d}
We have $N_{d,g}(\emptyset,(d))=d\cdot N_{d,g}((d),\emptyset)$. 
\end{corollary}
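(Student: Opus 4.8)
The plan is to deduce the identity directly from the combinatorial rule of Theorem~\ref{th:combin-rule-two-ptns}, reducing it to a term-by-term comparison of the two marking counts. Writing out both sides using \eqref{eq:combin-rule-two-ptns} together with \eqref{eq:mu-rho}, and noting that $\mu_{(d)}(\Dcal)=d\,\mu(\Dcal)$ while $\mu_\emptyset(\Dcal)=\mu(\Dcal)$ (the empty product equals~$1$), I get
\[
N_{d,g}(\emptyset,(d))=d\sum_\Dcal \mu(\Dcal)\,\nu_{\emptyset,(d)}(\Dcal),
\qquad
N_{d,g}((d),\emptyset)=\sum_\Dcal \mu(\Dcal)\,\nu_{(d),\emptyset}(\Dcal).
\]
Hence it suffices to prove that $\nu_{\emptyset,(d)}(\Dcal)=\nu_{(d),\emptyset}(\Dcal)$ for every labeled floor diagram~$\Dcal$ of degree~$d$ and genus~$g$; the factor~$d$ is then supplied entirely by the multiplicity weighting.

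Next I would unravel the marking procedure in the two extreme cases $(\lambda,\rho)=(\emptyset,(d))$ and $(\lambda,\rho)=((d),\emptyset)$. In both, the relevant partition has the single part~$d$, so Step~1 introduces exactly one new edge, of weight~$d$, emanating from a single original vertex~$v_*$ and terminating at a single new vertex~$t$; all other original vertices acquire no new edges. The requirement that the total weight of new edges leaving~$v$ equal $1-\divv(v)$ then forces $\divv(v_*)=1-d$ and $\divv(v)=1$ for every $v\neq v_*$. Since the largest vertex of~$\Dcal$ has no outgoing edges and therefore nonpositive divergence, it cannot be one of the vertices of divergence~$1$, so it must coincide with~$v_*$. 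Thus $v_*$ is the maximal original vertex, and the weight-$d$ edge runs from it to~$t$. Apart from the label ``$v_1$'' attached to~$t$ in the case $\lambda=(d)$, the two constructions produce literally the same weighted graph~$\widetilde G$ after Steps~1--2.

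The crux is to check that the linear orderings counted in Step~3 coincide. In the case $(\lambda,\rho)=((d),\emptyset)$ the target $t=v_1$ is required to be the maximal element of~$\widetilde V$. In the case $(\lambda,\rho)=(\emptyset,(d))$ no such requirement is imposed, but I claim $t$ is forced to be maximal regardless. Indeed, the only edge incident to~$t$ is $v_*\to t$, so the sole order constraint on~$t$ is $t>v_*$; and every other vertex of~$\widetilde G$ lies at or below~$v_*$, since each original vertex is $\le v_*$ and each midpoint~$m_e$ inserted into an edge $u\to u'$ satisfies $m_e<u'\le v_*$. Consequently $t$ is the global maximum in both constructions, the two sets of admissible orderings are identical, and they agree under the automorphism identification of Definition~\ref{def-marking}. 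Therefore $\nu_{\emptyset,(d)}(\Dcal)=\nu_{(d),\emptyset}(\Dcal)$, completing the argument.

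I expect the only delicate point to be the verification that $t$ is automatically maximal in the $(\emptyset,(d))$ marking---that is, confirming that every midpoint vertex sits strictly below~$v_*$ in any compatible ordering---together with the bookkeeping that the special vertex~$v_*$ is forced to be the top vertex of~$\Dcal$. Once these are in place the equality of the two marking counts is immediate. (When $\Dcal$ admits no marking of this shape, both counts vanish and the identity holds trivially, so no separate treatment of that case is needed.)
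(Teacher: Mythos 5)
Your proposal is correct and follows essentially the same route as the paper's proof: the paper simply asserts that the two sets of $(\lambda,\rho)$-marked floor diagrams coincide while the multiplicities $\mu_\rho$ differ by a factor of~$d$, which is exactly your reduction via~\eqref{eq:mu-rho} and the term-by-term identity $\nu_{\emptyset,(d)}(\Dcal)=\nu_{(d),\emptyset}(\Dcal)$. Your careful verification that the weight-$d$ edge must emanate from the maximal vertex of~$\Dcal$ and that its endpoint~$t$ is therefore automatically the maximum in any compatible ordering is precisely the content the paper leaves as ``easy to see.''
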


\begin{proof}
It is easy to see that the corresponding two sets of
$(\lambda,\rho)$-marked floor diagrams are the same
whereas their multiplicities $\mu_\rho$ differ by a factor of~$d$. 
\end{proof}

As we learned from R.~Vakil, Corollary~\ref{cor:Nd00d} can be seen to
be a particular case of the Caporaso-Harris formula. 

\medskip

In the special case $g=0$ (counting irreducible plane rational curves
maximally tangent to a given line), the relevant problem
of combinatorial enumeration can be solved completely.
As a result, we obtain a recurrence (see
Theorem~\ref{th:Nd0-recurrence} below) that can be used to calculate as 
many numbers $N_{d,0}((d),\emptyset)$ and $N_{d,0}(\emptyset,(d))$ as
one's computing resources allow. 
See Figure~\ref{fig:max-tangent}.

%The first several values of $N_{d,0}((d),\emptyset)$ 
%and $N_{d,0}(\emptyset,(d))$ are listed in
%Figure~\ref{fig:max-tangent}.
%They were computed using a recurrence established 
%in Theorem~\ref{th:Nd0-recurrence} below. 

\begin{figure}[htbp]
\begin{center}
\begin{tabular}{r|r|r}
$d$ & $N_{d,0}((d),\emptyset)$ & $N_{d,0}(\emptyset,(d))$
\\
\hline
&\\[-.15in]
1 & 1 & 1\\
2 & 1 & 2\\
3 & 7 & 21\\
4 & 138 & 552\\
5 & 5477 & 27385\\
6 & 367640 & 2205840\\
7 & 37541883 & 262793181\\
8 & 5432772352 & 43462178816\\
9 & 1059075055273 & 9531675497457\\
10 & 267757626501504 & 2677576265015040\\
11 & 85244466165571535 & 937689127821286885\\
12 & 33379687015338236672 & 400556244184058840064\\
13 & 15770655073870516443597 & 205018515960316713766761\\
14 & 8847780392111931116474368 & 123868925489567035630641152\\
15 & 5815426547948880787678282627 & 87231398219233211815174239405 \\
16 & 4426738320076692932937846865920 & 70827813121227086927005549854720
%17 & 3863757059392099045703984747480273\\
%18 & 3833332998398747070764294450578030592\\
%19 & 4289947508881503845715302033908618704663\\
%20 & 5378721410652175552490608367009627433140224
\end{tabular}
\end{center}
\caption{\hbox{Number of irreducible rational curves maximally tangent
    to a line}} 
\label{fig:max-tangent}
\end{figure}

\begin{theorem}
\label{th:Nd0-recurrence}
The numbers $z(d)=N_{d,0}((d),\emptyset)$ satisfy the recurrence
relation
\begin{equation}
\label{eq:z(d)-recurrence}
z(d+1)=
\sum_{k=1}^d \frac{(2d)!}{k!} 
\,\,\,\sum_{\substack{a_1+\cdots+a_k=d \\ a_1,\dots,a_k>0}}
  \,\,\,\prod_{i=1}^k \frac{a_i^2 \,z(a_i)}{(2a_i)!}. 
\end{equation}
\end{theorem}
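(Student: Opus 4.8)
The plan is to read the statement off the combinatorial rule of Theorem~\ref{th:combin-rule-two-ptns}, specialized to $g=0$, $\lambda=(d)$, $\rho=\emptyset$. Since $\rho=\emptyset$, formula~\eqref{eq:mu-rho} gives $\mu_\emptyset(\Dcal)=\mu(\Dcal)=\prod_e w(e)^2$, so that $z(d)=\sum_\Dcal \mu(\Dcal)\,\nu_{(d),\emptyset}(\Dcal)$ over genus-$0$ labeled floor diagrams of degree~$d$. First I would pin down which diagrams actually contribute. Because $\rho=\emptyset$, every new edge created in Step~1 must land on the unique vertex~$v_1$, and because $\lambda=(d)$ those edges must form the single part~$d$; since $\sum_v(1-\divv(v))=d$ with each summand nonnegative (cf.~\eqref{eq:div}), this forces one vertex to carry $1-\divv=d$ and all others $1-\divv=0$. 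As the maximal vertex is the unique sink, it must be the one with $\divv=1-d$, while $\divv(j)=1$ for $j<d$. Rooting such a tree at its maximal vertex and reading off the flow interpretation of $\divv$ identifies these diagrams with \emph{decreasing trees} (every non-root label smaller than its parent's) in which each edge weight equals the number of vertices in the subtree it cuts off from the root. This reformulation is what I would carry through the rest of the argument.

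Next I would decompose. Deleting the root $d{+}1$ from a degree-$(d{+}1)$ diagram splits it into $k$ branches $T_1,\dots,T_k$ (one per child of the root) of sizes $a_1,\dots,a_k$ with $a_1+\cdots+a_k=d$; after order-preserving relabeling of its vertices to $\{1,\dots,a_i\}$, each $T_i$ becomes a degree-$a_i$ diagram $\Dcal_i$ of the same kind. Two weight computations run in parallel. The multiplicity factors as $\mu=\prod_i a_i^2\,\mu(\Dcal_i)$: the edge joining the root of~$T_i$ to the deleted vertex has weight equal to the subtree size~$a_i$ and contributes $a_i^2$, while all edges internal to~$T_i$ retain their subtree-size weights and contribute $\mu(\Dcal_i)$. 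This is the origin of the factor $\prod_i a_i^2$ in~\eqref{eq:z(d)-recurrence}.

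The heart of the argument, and the step I expect to be the main obstacle, is the factorization of the \emph{markings}. A $((a_i),\emptyset)$-marked floor diagram has exactly $2a_i$ vertices, and in the big diagram the vertices belonging to $T_i$ are its $a_i$ originals, its $a_i-1$ internal midpoints, and the single midpoint inserted on the edge from $T_i$ to the root---again $2a_i$ in total, with that last midpoint playing the role of the top vertex~$v_1$ of $\Dcal_i$. I would verify that the only order constraints among these vertices are internal to the branches (each midpoint lies between its endpoints, and the connecting midpoint need only sit above the root of its own branch), so that a marking of the big diagram is precisely an arbitrary \emph{shuffle} of the $k$ markings of the $\Dcal_i$; the number of such shuffles is the multinomial $\tfrac{(2d)!}{(2a_1)!\cdots(2a_k)!}$, which supplies the remaining factors. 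The subtle point---the one easiest to get wrong---is that the global labels of the original vertices are \emph{determined} by their positions in the shuffle rather than chosen separately, so one must \emph{not} also sum over set partitions of $\{1,\dots,d\}$, on pain of a spurious factor $\binom{d}{a_1,\dots,a_k}$. Finally, since the branches are unordered, ordered tuples $(a_1,\dots,a_k)$ overcount by $k!$, which explains the $\tfrac1{k!}$. Assembling, the contribution of a fixed ordered tuple is $\tfrac{(2d)!}{\prod_i(2a_i)!}\prod_i a_i^2\,z(a_i)$, and summing over $k$ and over $(a_1,\dots,a_k)$ with the $\tfrac1{k!}$ yields exactly~\eqref{eq:z(d)-recurrence}. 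As a running consistency check I would use the equivalent exponential-formula phrasing: the right-hand side equals $(2d)!\,[x^{2d}]\bigl(\exp f(x)-1\bigr)$ with $f(x)=\sum_{a\ge1}\tfrac{a^2 z(a)}{(2a)!}\,x^{2a}$, which is precisely the species-composition signature of the ``delete the root, shuffle the relabeled branches'' decomposition above, and I would test the whole scheme against the values $z(1)=z(2)=1$, $z(3)=7$ before committing to the write-up.
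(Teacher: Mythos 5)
Your proposal is correct and takes essentially the same approach as the paper's proof: the paper likewise identifies the contributing diagrams with increasing rooted trees (rooted at the maximal vertex) whose edge weights are the subtree sizes (``hooklengths''), cuts off the top of a marked degree-$(d+1)$ diagram, and recovers its marking as a shuffle of the $k$ relabeled branch markings, producing exactly your factors $\prod_i a_i^2$, $\binom{2d}{2a_1,\dots,2a_k}$, and $1/k!$. The differences are cosmetic (the paper packages the branches as ``marked increasing trees'' on $2a_i$ vertices rather than as $((a_i),\emptyset)$-marked diagrams with the dangling midpoint playing the role of $v_1$, which is the same object).
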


\begin{proof}
In order for a labeled floor diagram $\Dcal$ to allow for a marking containing
an edge of weight~$d$ (necessarily pointing from the last vertex
$v_\text{last}$ to the unique sink~$v_1$), 
two conditions must be satisfied at each vertex $v$ in~$\Dcal$:
\begin{itemize}
\item
there is exactly one outgoing edge emanating from~$v$;
\item
the inequality \eqref{eq:div} holds with an equality sign.
\end{itemize}
These conditions mean that the labeled floor diagrams~$\Dcal$ 
under consideration can be identified with \emph{increasing rooted trees} on
the vertex set $\{1,\dots,d\}$, i.e., with the labeled trees on $d$
vertices in which the labels increase along each simple path ending at
the root vertex~$d=v_\text{last}$. 
%The number of such trees is well known to be equal to~$(d-1)!$.
(To be literally precise, such trees are \emph{decreasing} in the
terminology of~\cite{ec1,ec2}, but this term would be misleading since we
orient the edges towards the root rather than away from it.) 
It is immediate from the definitions that
$\mu_{\emptyset}(\Dcal)=\mu(\Dcal)$ and 
$\nu_{(d),\emptyset}(\Dcal)=\nu(\Dcal)$,
so we have 
$z(d)=N_{d,0}((d),\emptyset)=\sum_\Dcal \mu(\Dcal)\,\nu(\Dcal)$,
the sum over all increasing trees $\Dcal$ on $d$ vertices. 
For example, referring to the first table in Appendix~A,
$z(3)=4\cdot 1+1\cdot 3=7$ and
$z(4)=36\cdot 1+9\cdot 3+4\cdot 3+ 4\cdot 7+4\cdot 5+1\cdot 15=138$. 

The \emph{hooklength} $h(v)$ of a vertex $v$ in~$\Dcal$ 
(cf.\ \cite[3.12.18]{sagan}) is, by definition, the number of vertices
$u$ which precede~$v$ in~$\Dcal$ (including $u=v$). 
It is easy to see from the divergence condition
that the hooklengths of the non-root vertices are
precisely the edge weights of~$\Dcal$.
Thus $\mu(\Dcal)=\prod_{v\neq v_\text{last}} (h(v))^2$. 

Consistent with the above, let a \emph{marked increasing tree}
on $2d$ vertices be an increasing (rooted) tree $\tDcal$ obtained
from an increasing tree $\Dcal$ on $d$ vertices by appending an extra
root vertex beyond the old one, introducing an
extra vertex at the middle of each edge of~$\Dcal$, 
and extending the linear ordering to the resulting tree. 
We then~have
\[
z(d)=\sum_{\tDcal} \prod_{v\neq v_\text{last}} (h(v))^2, 
\]
where the sum is over all marked increasing trees $\tDcal$ on $2d$
vertices, 
and the product is over all non-root vertices of the corresponding
tree~$\Dcal$. 

The recurrence~\eqref{eq:z(d)-recurrence} can now be obtained using
standard techniques of combinatorial enumeration
(cf., e.g., \cite[Chapter~5]{ec2} or \cite[Section~5.2]{bll}). 
A marked increasing tree $\tDcal$ on $2(d+1)$
vertices is uniquely decomposed, by cutting off the last two vertices 
(the old root and the new one) together with the edges incident to
them, into a \emph{shuffle} of some number~$k$ of marked rooted trees
$\tDcal_1,\dots,\tDcal_k$ on $2a_1,\dots,2a_k$ vertices, respectively,
where %$\sum_i (2a_i)=2d$ or, equivalently, 
$\sum_i a_i=d$, and we numbered the subtrees arbitrarily by the integers
$1,\dots,k$.
(To compensate for this additional choice, 
we will need to divide by~$k!$ at the end.) 
The multiset of non-root hooklengths of $\tDcal$ is the disjoint union of the
multisets of hooklengths of $\tDcal_1,\dots,\tDcal_k$ 
(including the root hooklengths $a_1,\dots,a_k$). 
Hence $\mu(\tDcal)=\prod_i \mu(\tDcal_i) a_i^2$. 
Finally, for a given ordered $k$-tuple of marked trees
$\tDcal_1,\dots,\tDcal_k$, the number of possible shuffles is the
multinomial coefficient
\[
\binom{2d}{2a_1,\dots,2a_k}=\frac{(2d)!}{(2a_1)!\cdots(2a_k)!}. 
\]
Putting everything together, we obtain~\eqref{eq:z(d)-recurrence}. 
\end{proof}

\begin{theorem}
\label{th:Nd0-ODE}
The generating function
\begin{equation}
y(x)=\sum_{d=1}^\infty \frac{d^2 N_{d,0}((d),\emptyset)}{(2d)!}\, x^d
=\frac12 x+\frac16 x^2 + \frac{7}{80}x^3 + \frac{23}{420}x^4+\cdots 
\end{equation}
is the unique solution of the %ordinary differential equation 
initial value problem 
\begin{equation}
\label{eq:Nd0-ODE}
x(4y'-e^y-x e^y y')=2y, \qquad y(0)=0. 
\end{equation}
%satisfying the initial condition $y(0)=0$. 
\end{theorem}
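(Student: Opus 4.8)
The plan is to translate the recurrence \eqref{eq:z(d)-recurrence} from Theorem~\ref{th:Nd0-recurrence} into a relation among the Taylor coefficients of $y(x)$, and then to recognize that relation as the coefficientwise form of the initial value problem \eqref{eq:Nd0-ODE}. The whole argument is a formal power series computation; there is no deep analytic content, so I would phrase everything in terms of extracting coefficients $[x^m]$ from formal series.

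First I would note that the coefficient of $x^d$ in $y(x)$ is exactly $c_d := \frac{d^2 N_{d,0}((d),\emptyset)}{(2d)!} = \frac{d^2 z(d)}{(2d)!}$, so that each factor $\frac{a_i^2 z(a_i)}{(2a_i)!}$ occurring in \eqref{eq:z(d)-recurrence} equals $[x^{a_i}]\,y(x)$. Hence the inner sum over compositions $a_1+\cdots+a_k=d$ is precisely $[x^d]\,y(x)^k$. The one point requiring care is the range of summation: the recurrence sums $k$ from $1$ to $d$, but since $y(x)$ has no constant term, $y(x)^k$ begins at order $x^k$, so $[x^d]\,y(x)^k=0$ for $k>d$; this lets me extend the sum to all $k\ge 1$ without changing anything. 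Summing the weights $1/k!$ then gives
\[
\sum_{k\ge 1}\frac{1}{k!}[x^d]\,y(x)^k=[x^d]\bigl(e^{y(x)}-1\bigr),
\]
so that \eqref{eq:z(d)-recurrence} becomes $\dfrac{z(d+1)}{(2d)!}=[x^d]\,e^{y(x)}$ for every $d\ge 1$.

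Next I would rewrite the left-hand side in terms of the coefficients of $y$. Using $z(d+1)=\frac{(2d+2)!}{(d+1)^2}c_{d+1}$, a one-line simplification gives $\frac{z(d+1)}{(2d)!}=\frac{2(2d+1)}{d+1}c_{d+1}$, so with $e_d:=[x^d]\,e^{y}$ the recurrence reads $(4d+2)\,c_{d+1}=(d+1)\,e_d$ for $d\ge 1$. Reindexing by $m=d+1$, this is $(4m-2)c_m=m\,e_{m-1}$ for all $m\ge 2$, supplemented by $c_0=0$ (from $y(0)=0$) and $c_1=\tfrac12$ (from $z(1)=1$). Finally I would extract the coefficient of $x^m$ from the proposed equation written as $4xy'-xe^y-x^2e^yy'=2y$. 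Using $[x^m](4xy')=4mc_m$, $[x^m](xe^y)=e_{m-1}$, and $[x^m](x^2e^yy')=[x^m]\,x^2(e^y)'=(m-1)e_{m-1}$, the $x^m$-coefficient of the equation is $4mc_m-e_{m-1}-(m-1)e_{m-1}=2c_m$, i.e.\ $(4m-2)c_m=m\,e_{m-1}$, which is exactly the relation above for $m\ge 2$, while $m=0$ and $m=1$ return $c_0=0$ and $c_1=\tfrac12$.

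Thus the coefficients $c_m$ of $y(x)$ satisfy \eqref{eq:Nd0-ODE} coefficientwise. Since $e_{m-1}$ depends only on $c_0,\dots,c_{m-1}$ and $4m-2\neq 0$ for $m\ge 1$, these relations determine the $c_m$ recursively and uniquely from $c_0=0$; this establishes both that $y(x)$ solves the initial value problem and that the solution is unique as a formal power series. The only thing to watch is the bookkeeping at the low-order terms $m=0,1$, where I would check that the boundary contributions of the three terms on the left match the initial data rather than the generic recurrence — a routine verification rather than a genuine obstacle.
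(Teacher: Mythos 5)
Your proposal is correct and follows essentially the same route as the paper: both proofs translate the recurrence of Theorem~\ref{th:Nd0-recurrence} into the formal power series identity relating $\sum \frac{2(2d+1)}{d+1}\tilde z(d+1)x^{d+1}$ to $xe^{y}$, the only difference being that the paper derives the ODE by summing and then differentiating, whereas you verify the same coefficient relation $(4m-2)c_m=m\,e_{m-1}$ by extracting $[x^m]$ from the ODE directly. Your explicit treatment of uniqueness (the relation determines $c_m$ recursively since $4m-2\neq 0$) is a small improvement over the paper, which leaves that point implicit.
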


\begin{proof}
Let us define
$\tilde z(d)=\frac{d^2\,z(d)}{(2d)!}$ and $\tilde z(0)=0$, 
so that we have $y(x)=\sum_{d=0}^\infty \tilde z(d) x^d$.
The recurrence~\eqref{eq:z(d)-recurrence} can be rewritten as
\[
\frac{2(2d+1)}{d+1}\,\tilde z(d+1)=
\sum_{k=0}^\infty \frac{1}{k!} 
\,\,\,\sum_{\substack{a_1+\cdots+a_k=d \\ a_1,\dots,a_k\ge 0}}
  \,\,\,\prod_{i=1}^k \tilde z(a_i) ,  
\]
which implies
\[
\sum_{d=0}^\infty \bigl(4-\frac{2}{d+1}\bigr)\,\tilde z(d+1)\,x^{d+1} 
=x\sum_{k=0}^\infty \frac{1}{k!} (y(x))^k 
=xe^y. 
\]
Differentiating, we get
\[
4y'-2\sum_{d=0}^\infty \tilde z(d+1)\,x^d
=4y'-2\frac{y}{x}
=e^y+xe^y y', 
\]
and \eqref{eq:Nd0-ODE} follows. 
\end{proof}

\subsection{Curves with prescribed tangency at a given point}
\begin{corollary} 
\label{cor:Ndg-2}
The %relative Gromov-Witten invariant 
number $N_{d,g}((k),\langle1^{d-k}\rangle)$ 
of irreducible plane complex algebraic curves of degree~$d$
and genus~$g$ passing through a generic configuration of 
$3d+g-k-1$ points and having tangency of order~$k$ 
to a given line at a given point 
is equal to $\sum_{\tDcal} \mu(\tDcal)$, 
the sum over marked floor diagrams $\tDcal$ of
degree~$d$ and genus~$g$ in which the last $k$ vertices are sinks
connected to the same vertex. 
\end{corollary}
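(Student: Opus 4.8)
The plan is to specialize the combinatorial rule of Theorem~\ref{th:combin-rule-two-ptns} to $\lambda=(k)$ and $\rho=\langle 1^{d-k}\rangle$, and then to set up a weight-preserving bijection between the resulting marked diagrams and ordinary markings of the prescribed shape. Since every part of~$\rho$ equals~$1$, the product $\prod_i\rho_i$ in~\eqref{eq:mu-rho} is~$1$, so $\mu_\rho(\Dcal)=\mu(\Dcal)$ for every~$\Dcal$, and Theorem~\ref{th:combin-rule-two-ptns} reads
\[
N_{d,g}((k),\langle 1^{d-k}\rangle)=\sum_\Dcal \mu(\Dcal)\,\nu_{(k),\langle 1^{d-k}\rangle}(\Dcal),
\]
the sum over all labeled floor diagrams of degree~$d$ and genus~$g$. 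What remains is to reinterpret this sum as $\sum_{\tDcal}\mu(\tDcal)$ over the ordinary marked floor diagrams in the statement.

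Next I would describe the bijection explicitly. In a $(k),\langle 1^{d-k}\rangle$-marking the single $\lambda$-vertex~$v_1$ is maximal and receives exactly one new edge, of weight~$k$, issuing from some vertex~$v$ of~$\Dcal$; all other new edges have weight~$1$ and terminate at the $\rho$-leaves. To such a marking I associate the ordinary marking obtained by deleting~$v_1$ and its weight-$k$ edge and instead attaching $k$ new weight-$1$ leaves to~$v$, placed as the $k$ largest elements of the extended order. These leaves inherit the maximality of~$v_1$, so they are exactly the last $k$ vertices, they are sinks, and they are all joined to the single vertex~$v$; since replacing one weight-$k$ edge by $k$ weight-$1$ edges leaves the total new-edge weight at each vertex equal to $1-\divv(v)$, the output is a legitimate ordinary marking of the same~$\Dcal$ in the sense of Definition~\ref{def-marking}. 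The inverse map contracts the last $k$ leaves (which by hypothesis share the common neighbor~$v$) into one maximal vertex joined to~$v$ by a single weight-$k$ edge, reinterpreting the surviving leaves as the $\rho$-part; the two operations are visibly mutually inverse.

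Finally I would verify that the correspondence preserves multiplicity. The weight-$k$ edge, and the $k$ weight-$1$ edges that replace it, are new edges and so do not figure in $\mu(\Dcal)=\prod_{e\in E}(w(e))^2$, which depends only on the edges of the underlying~$\Dcal$; hence each marking and its image both carry the common weight $\mu(\Dcal)=\mu(\tDcal)$. Summing over all~$\Dcal$ then turns the displayed right-hand side into the asserted sum. I expect the only point requiring genuine care to be the bookkeeping of isomorphisms: because markings are considered up to automorphisms fixing the original vertex set, I must check that the $k$ mutually interchangeable top leaves contribute a single ordinary marking (matching the single $(k),\langle 1^{d-k}\rangle$-marking they came from), so that $\nu_{(k),\langle 1^{d-k}\rangle}(\Dcal)$ is reproduced with no spurious multiplicity, and symmetrically that declaring the new leaves maximal always extends to a valid linear order as demanded in Step~3 of the marking procedure.
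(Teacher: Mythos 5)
Your proposal is correct and follows essentially the same route as the paper: the paper also specializes Theorem~\ref{th:combin-rule-two-ptns} and invokes exactly this multiplicity-preserving bijection, phrased in the inverse direction (``glue the edges pointing to the last $k$ sinks into a single edge of weight~$k$''). Your write-up simply spells out the details the paper leaves implicit, including the weight bookkeeping at~$v$ and the automorphism convention that makes the $k$ interchangeable top leaves count once.
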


\begin{proof}
Indeed, the (ordinary) marked floor diagrams of this kind are in
multiplicity-preserving bijection with the
$((k),\langle1^{d-k}\rangle)$-marked floor diagrams:
simply glue the edges pointing to the last $k$ sinks into a single
edge of weight~$k$. 
\end{proof}

\begin{example}
%To illustrate Corollary~\ref{cor:Ndg-2}, 
Let us compute (once again) the number $N_{3,0}((2),(1))$.
Among the (ordinary) marked floor diagrams of degree $d=3$ 
and genus $g=0$, there are exactly two in which the last two sinks are
\emph{not} connected to the same vertex;
they are shown in Figure~\ref{fig:mfd-not-N3021}. 
Each of the two has multiplicity~1, and we conclude that 
$N_{3,0}((2),(1))=N_{3,0}-2=12-2=10$, in agreement with
Figure~\ref{fig:d=3,g=0}. 
\begin{figure}[htbp]
\begin{center}
\begin{picture}(55,10)(110,3)
\setlength{\unitlength}{4pt}\thicklines
\multiput(0,0)(10,0){3}{\circle{2}}
\multiput(5,0)(10,0){4}{\circle*{2}}
\multiput(30,0)(10,0){1}{\circle*{2}}
\multiput(1,0)(5,0){5}{\line(1,0){3}}
\qbezier(20.8,0.6)(25,4)(29.2,0.6)
\qbezier(10.8,0.6)(22.5,7)(34.2,0.6)
\end{picture}
\begin{picture}(55,10)(-20,3)
\setlength{\unitlength}{4pt}\thicklines
\multiput(0,0)(10,0){3}{\circle{2}}
\multiput(5,0)(10,0){4}{\circle*{2}}
\multiput(30,0)(10,0){1}{\circle*{2}}
\multiput(1,0)(5,0){5}{\line(1,0){3}}
\qbezier(10.8,0.6)(20,6)(29.2,0.6)
\qbezier(20.8,0.6)(27.5,5)(34.2,0.6)
\end{picture}
\end{center}
\caption{Marked floor diagrams not contributing to $N_{3,0}((2),(1))$}
\label{fig:mfd-not-N3021}
\end{figure}
\end{example}

\section{Node polynomials}
\label{sec:node-polynomials}

%Recall from Section~\ref{sec:Ndg+Severi} that the Severi
%degree~$N^{d,\delta}$ 

This section is devoted to the classical problem of 
determining the Severi degrees~$N^{d,\delta}$ 
(see Section~\ref{sec:Ndg+Severi}) when the cogenus~$\delta$ is
fixed. 
In other words, how does~the number $N^{d,\delta}$ of $\delta$-nodal
(possibly reducible) plane curves depend on the degree~$d$? 
%One then wishes to study the function
%\[
%d\mapsto n_{d,\delta}\stackrel{\rm def}{=}
%N_{d,\frac{(d-1)(d-2)}{2}-\delta}; 
%\]
%thus $n_{d,\delta}$ (notation borrowed
%from~\cite{difrancesco-itzykson}) 
%is the number of irreducible degree~$d$ plane
%curves with $\delta$ nodes passing through a generic configuration of
%$\frac{d(d+3)}{2}-\delta$ points on the plane. 
%A~closely related quantity is the \emph{Severi degree}~$N^{d,\delta}$,
%which differs from $n_{d,\delta}$ by allowing reducible curves. 
As already noted in~\eqref{eq:severi=Ndg}, if $d$ is large enough 
(specifically $d\ge\delta+2$), then all curves counted by
$N^{d,\delta}$ are irreducible, and the Severi degree coincides with the
corresponding Gromov-Witten invariant:
$N^{d,\delta}=N_{d,\frac{(d-1)(d-2)}{2}-\delta}\,$. 

Substantial efforts have been expended by various researchers 
to determine~$N^{d,\delta}$, as a function of~$d$, 
for specific small values of~$\delta$; 
see \cite[Remark~3.7]{kleiman-piene} for a thorough historical review.  
For $\delta\le 3$, the formulas go back to the 19th century
(J.~Steiner, A.~Cayley, G.~Salmon, and S.~Roberts; see the references
in~\cite{kleiman-piene}). 
In particular: 
\begin{align}
\label{eq:Nd,gmax} 
&N^{d,0}
%=N_{d,\frac{(d-1)(d-2)}{2}}
=1,\\
\label{eq:Nd,gmax-1}
&N^{d,1}
%=N_{d,\frac{(d-1)(d-2)}{2}-1}
=3(d-1)^2, % \qquad (d\ge3),
\\
\label{eq:Nd,gmax-2}
&N^{d,2}
%=N_{d,\frac{(d-1)(d-2)}{2}-2}
=\frac32(d-1)(d-2)(3d^2-3d-11). %\qquad (d\ge4). 
\end{align}
For $\delta\le 6$, the problem has been solved by
I.~Vainsencher~\cite{vainsencher};
see~\cite[Proposition~2]{difrancesco-itzykson} for explicit formulas. 
This has been extended to $\delta\le 8$ by S.~Kleiman and
R.~Piene~\cite{kleiman-piene} 
by further refining Vainsencher's method;
see Remark~\ref{rem:kleiman-piene} below. 
Other approaches to computing $N^{d,\delta}$ when $\delta$ is small were
developed by J.~Harris and
R.~Pandharipande~\cite{harris-pandharipande}, L.~G\"ottsche
\cite{goettsche}, 
and Y.~Choi~\cite{choi4, choi-97} (based on the work
of~Z.~Ran~\cite{ran89, ran98}). 

The following polynomiality property has been first suggested by 
P.~Di~Francesco and C.~Itzykson 
\cite[Remark~(b) after Proposition~2]{difrancesco-itzykson}, 
and then stated as a special case of a more general
conjecture by L.~G\"ottsche \cite[Conjecture~4.1 and
  Remark~4.2(2)]{goettsche}. 

\begin{theorem}
\label{th:goettsche}
For any fixed~$\delta$, there exist a polynomial 
$N_\delta(d)\in\mathbb{Q}[d]$ of degree~$2\delta$
and a threshold value $d_0(\delta)$ 
such that for $d\ge d_0(\delta)$, we have $N^{d,\delta}=N_\delta(d)$. 
\end{theorem}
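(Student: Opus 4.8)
The plan is to reduce the statement to one about connected labeled floor diagrams and then to analyze how such diagrams of a fixed cogenus vary with the degree. First I would invoke the relation \eqref{eq:severi=Ndg}: as soon as $d\ge\delta+2$, every degree-$d$ curve of cogenus~$\delta$ is irreducible, so $N^{d,\delta}=N_{d,g}$ with $g=\frac{(d-1)(d-2)}{2}-\delta=g_{\max}(d)-\delta$. By Theorem~\ref{th:bm-curves} it therefore suffices to prove that $\sum_{\Dcal}\mu(\Dcal)\,\nu(\Dcal)$, summed over connected labeled floor diagrams~$\Dcal$ of degree~$d$ and genus~$g_{\max}(d)-\delta$, is, for $d$ large, a polynomial in~$d$ of degree~$2\delta$.

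The key observation is that the cogenus measures the edge deficiency relative to the unique maximal-genus diagram. A connected diagram on $d$ vertices of genus~$g$ has $d+g-1$ edges, so a cogenus-$\delta$ diagram has exactly $\binom{d}{2}-\delta$ edges; it falls $\delta$ edges short of the \emph{caterpillar} $\mathcal{C}_d$, in which consecutive vertices $i,i+1$ are joined by $i$ unit-weight edges and $\mu(\Dcal)=\nu(\Dcal)=1$. Writing $f_i$ for the total weight crossing the gap between vertices $i$ and $i+1$, the divergence condition \eqref{eq:div} reads $f_i-f_{i-1}\le 1$, so $f_i\le i$, with $f_i=i$ for all~$i$ realized uniquely by~$\mathcal{C}_d$. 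I would formalize the idea that a diagram of small cogenus deviates from~$\mathcal{C}_d$ only in a bounded way by introducing \emph{templates}: each template is a fixed weighted graph on a short ordered vertex set recording a local cluster of over-weight or long-span edges, carrying a cogenus equal to the edge deficiency it contributes. The structural lemma to prove is that every cogenus-$\delta$ diagram is obtained from~$\mathcal{C}_d$ by inserting a collection of templates of cogenera summing to~$\delta$ at a weakly increasing sequence of positions in $\{1,\dots,d\}$, with the stretches between templates left standard. Since a template of cogenus~$c$ has $O(c)$ edges of bounded span and weight, there are finitely many templates of cogenus $\le\delta$, hence finitely many admissible collections, all independent of~$d$.

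With this decomposition in hand the enumeration becomes a placement problem. For each fixed collection of $m$ templates (finitely many choices, $m\le\delta$) I would count the $\mu\nu$-weighted number of diagrams obtained by sliding the templates into $\{1,\dots,d\}$. The multiplicity $\mu(\Dcal)=\prod_e w(e)^2$ equals the product of the constant template-local multiplicities, while $\nu(\Dcal)$ factors into template-local contributions times factors from the standard stretches; each factor is a polynomial in the template positions whose degrees sum to the total cogenus~$\delta$, because in a standard region the flow $f_i$ and the number of admissible marking orderings grow linearly in the gap index. Summing such a degree-$\delta$ position polynomial over all weakly increasing placements in a range of size~$d$ yields a polynomial in~$d$ of degree $\delta+m\le 2\delta$, the summand $m$ coming from the $m$-fold position sum. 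The maximum, degree exactly~$2\delta$, is attained by the collection of $\delta$ independent cogenus-$1$ templates, consistent with the quadratic growth $N^{d,1}=3(d-1)^2$ in~\eqref{eq:Nd,gmax-1}. Enlarging $\delta+2$ to a threshold $d_0(\delta)$ that lets all bounded collections fit disjointly guarantees the formula.

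I expect the main obstacle to be the structural lemma localizing the deviation from~$\mathcal{C}_d$ into finitely many templates with additive cogenus, and in particular the claim that $\nu(\Dcal)$ splits into template-local factors times polynomial-in-position factors of the stated degrees. Controlling $\nu$ is delicate precisely because a marking is a \emph{global} compatible linear extension rather than a local datum, so one must show that extending the order across a standard stretch contributes independently and polynomially, and that the identifications forced by parallel edges and by the isomorphism convention of Definition~\ref{def:lfd} are correctly accounted for.
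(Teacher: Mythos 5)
Your proposal follows essentially the same route as the paper's proof: the paper likewise isolates the deviation from the maximal-genus diagram by deleting all weight-one edges between consecutive vertices (after augmenting the diagram by an extra vertex $d+1$), calls the remaining pieces \emph{templates}, proves that cogenus is additive over templates (Lemma~\ref{lem:cogenus-template}), shows that the marking count factors into template-local quantities $P(\Gamma,k)$ that are polynomial in the offsets $k$ of degree equal to the number of template edges, and obtains $N^{d,\delta}$ as an iterated sum of these polynomials over non-overlapping offsets, of degree at most $2\delta$ with the maximum attained by $\delta$ cogenus-one templates. The only real difference is bookkeeping: the paper counts possibly-disconnected diagrams, computing the Severi degree directly via Corollary~\ref{cor:severi-combin}, which spares it the check implicit in your connected-diagram formulation that template reassembly cannot produce a disconnected diagram once $d\ge\delta+2$ (true, since a disconnected diagram of degree $d$ has cogenus at least $d-1$ by \eqref{eq:total-cogenus}).
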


Besides establishing polynomiality, 
our proof provides a method (admittedly tortuous)
for computing the polynomials~$N_\delta(d)$. 
Cf.\ Remark~\ref{rem:block-computing}. 

\begin{remark}
We prove Theorem~\ref{th:goettsche} with $d_0(\delta)=2\delta$. 
By further refining the argument, 
F.~Block~\cite{block-fpsac} has recently improved this to
$d_0(\delta)=\delta$. 

L.~G\"ottsche formulated his conjecture with 
$d_0(\delta)=\lceil\frac{\delta}{2}\rceil+1$.
This was verified by Block for all $\delta\le 13$
(cf.\ Remark~\ref{rem:block-computing}). 
P.~Di~Francesco and C.~Itzykson seem to suggest the %asymptotycally sharper
threshold value 
$d_0(\delta)=\frac{3}{2}+\sqrt{2\delta+\frac14}$
(so that $d\ge d_0(\delta)$ is equivalent to
$\delta\le \frac{(d-1)(d-2)}{2}$). 
Block's computations show this to fail, for the first time, for $\delta=13$. 
% SF: I decided against including the passage below since we don't
% have much evidence in support of a stronger conjecture 
\iffalse
We are not aware of any instance where $0\neq N^{d,\delta}\neq
N_\delta(d)$. 
In other words, it seems that whenever the number $N^{d,\delta}$ is
nonzero 
(that is, whenever $\delta\le\frac{d(d-1)}{2}$), it is given by the
appropriate value of the corresponding root polynomial. 
Moreover, some instances where $N^{d,\delta}$ vanishes are
correctly predicted by~$N_\delta(d)$.
In particular, 
$N_1(1)=N_2(1)=N_2(2)=N_4(3)=0$, in agreement with
 $N^{1,1}=N^{1,2}=N^{2,2}=N^{3,4}=0$. 
\fi
\end{remark}

\begin{remark}
According to Y.~Choi (\cite[Section~3]{choi-97}, unpublished;
see citations in \cite[Remark~3.7]{kleiman-piene} and
\cite[Remark~4.2(2)]{goettsche}),
Theorem~\ref{th:goettsche}, with $d_0(\delta)=\delta$, 
can be deduced from \cite[Theorem~5]{ran89}.

In an unpublished preprint~\cite{liu} (cf.\ also~\cite{liu-jdg}),  
A.-K.~Liu put forward a proof of polynomiality  
of Severi degrees, in a more general setting of counting
curves on an arbitrary surface. 
%We thank R.~Pandharipande for bringing Liu's work to our attention. 

Another proof of this result has been recently announced by
Y.-J.~Tzeng~\cite{tzeng}. 
\end{remark} 

\begin{remark}
\label{rem:kleiman-piene}
In the terminology of S.~Kleiman and R.~Piene~\cite{kleiman-piene}, 
$N_\delta(d)$ is called a \emph{node polynomial}.
The node polynomials $N_0(d)$, $N_1(d)$, and $N_2(d)$ are given by
\eqref{eq:Nd,gmax}, \eqref{eq:Nd,gmax-1}, and \eqref{eq:Nd,gmax-2},
respectively. 
The corresponding minimal threshold values $d_0(\delta)$ are all equal
to~1. 

Kleiman and Piene \cite[Section~3]{kleiman-piene} 
computed the node polynomials $N_\delta(d)$ for~$\delta\le 8$,
thereby establishing the corresponding instances of G\"ottsche's
conjecture (Theorem~\ref{th:goettsche}). 
Their computations can be summarized as follows: 

The generating function for the node polynomials $N_\delta(d)$ is given by
\begin{equation}
\label{eq:genf-node-poly}
\sum_{\delta\ge 0} N_\delta(d) \,t^\delta
=\exp\left(\sum_{j\ge 1} \frac{A_j(d)}{j}\,t^j
\right),
\end{equation}
where 
\begin{align*}
A_1(d) &= 3 (d - 1)^2,\\ 
A_2(d) &= -3 (d - 1) (14 d - 25),\\
A_3(d) &= 3(230 d^2-788d+633),\\
A_4(d) &= 9(-1340 d^2+5315 d-5023),\\
A_5(d) &= 9(24192d^2-107294d+114647),\\
A_6(d) &= 9(-445592d^2+2161292d-2545325),\\
A_7(d) &= 54(1386758d^2-7245004d+9242081),\\
A_8(d) &= 9(-156931220d^2+873420627d-1191950551).
\end{align*}
For $\delta\le8$,
the polynomials $N_\delta(d)$ computed via~\eqref{eq:genf-node-poly}
yield correct values of all nonvanishing Severi degrees~$N^{d,\delta}$,
%(that is, whenever $\delta\le\frac{d(d-1)}{2}$),
and even some zero values, namely
$N^{1,1}=N^{1,2}=N^{2,2}=N^{3,4}=0$. 
%Moreover, $N_1(1)=N_2(1)=N_2(2)=N_4(3)=0$, in agreement with
% $N^{1,1}=N^{1,2}=N^{2,2}=N^{3,4}=0$. 
\end{remark}

\begin{remark}
\label{rem:block-computing}
Our method for computing the node polynomials 
was implemented, with substantial algorithmic improvements, 
by F.~Block~\cite{block-fpsac},  %in August 2009 
and used to compute $N_\delta(d)$ for all~$\delta\le 13$. 
%Future, more efficient implementations will handle
%larger values of~$\delta$. 
Block's calculations confirmed that the polynomials $A_j(d)$ defined
by~\eqref{eq:genf-node-poly} are indeed quadratic in~$d$
(for $\delta\le 13$), in agreement with the strong form of G\"ottsche's
conjecture. 

\end{remark}

\begin{proof}[Proof of Theorem~\ref{th:goettsche}]
Our proof is purely combinatorial, and directly based on 
Theorem~\ref{th:bm-curves}. 

Let us call an edge~$e$ in a labeled floor diagram~$\Dcal$ \emph{short} 
if $e$ has weight~$1$, and connects consecutive vertices. 
If $\Dcal$ has small cogenus~$\delta$,
then ``almost all'' edges in~$\Dcal$ are short.
Removing those edges and considering the ``components'' of what remains,
we arrive at the following notion, which will play a key role in the
proof. 

\begin{definition}[\emph{Templates}]
\label{def:template}
A \emph{template} $\Gamma$ is a finite nonempty collection of weighted 
edges on a finite linearly ordered vertex set
$\{v_0<v_1<\cdots<v_\ell\}$ such that
\begin{itemize}
\item
for each edge $v_i\stackrel{e}{\longrightarrow} v_j$ in $\Gamma$,
we have $i<j$; 
\item
the weight $w(e)$ of every edge $e$ in $\Gamma$ is a positive integer; 
\item
the weight of an edge of the form $v_i\to v_{i+1}$ must be $\ge2$
(``no short edges''); 
\item
multiple edges are allowed, but loops are not;
\item
for every $j\in\{1,\dots,\ell-1\}$, there is at least one edge 
$v_i\stackrel{e}{\longrightarrow} v_k$ with $i<j<k$.
\end{itemize} 
With a template~$\Gamma$, we associate several quantities. 
The number $\ell=\ell(\Gamma)$ is called the \emph{length} of~$\Gamma$.
The product of squares of edge weights is the \emph{multiplicity}
of~$\Gamma$, denoted by~$\mu(\Gamma)$; cf.~\eqref{eq:mult-D}. 
The number
\begin{equation}
\label{eq:delta(Gamma)}
\delta(\Gamma)
=\sum_{v_i\,\stackrel{e}{\longrightarrow}\, v_j}
((j-i)\,w(e)-1) 
\end{equation}
is the \emph{cogenus} of~$\Gamma$. 
(The terminology will be justified by
Lemma~\ref{lem:cogenus-template}.) 
We set 
\[
\varepsilon(\Gamma)=\begin{cases}
1 & \text{if all edges arriving at $v_\ell$ have weight~1;}\\
0 & \text{otherwise.}
\end{cases}
\]
For $j\in\{1,\dots,\ell\}$, 
let $\varkappa_j=\varkappa_j(\Gamma)$ denote the total weight of all
edges $v_i\stackrel{e}{\longrightarrow} v_k$ with $i<j\le k$. 
By definition of a template, we have $\varkappa_j>0$. 
Let $\varkappa(\Gamma)=(\varkappa_1,\dots,\varkappa_\ell)$. 
Set
\begin{equation}
\label{eq:kmin}
k_{\min}(\Gamma)=\max_{1\le j\le\ell} (\varkappa_j-j+1). 
\end{equation}
Figure~\ref{fig:templates} shows all templates~$\Gamma$ with 
$\delta(\Gamma)=1$ or $\delta(\Gamma)=2$,
and the respective values of $\delta(\Gamma)$, $\ell(\Gamma)$, 
$\mu(\Gamma)$, $\varepsilon(\Gamma)$, $\varkappa(\Gamma)$, 
and~$k_{\min}(\Gamma)$. 
\end{definition}
\begin{figure}[htbp]
\begin{center}
\begin{tabular}{c|c|c|c|c|c|c|c}
$\Gamma$ &
$\delta(\Gamma)$ & $\ell(\Gamma)$ 
& $\mu(\Gamma)$ %multiplicity
& $\varepsilon(\Gamma)$ & $\varkappa(\Gamma)$ 
& $k_{\min}(\Gamma)$ & $P(\Gamma,k)$
\\
\hline \hline
&&&&&&&\\[-.1in]
\begin{picture}(95,8)(-10,-4)\setlength{\unitlength}{2.5pt}\thicklines
\multiput(0,0)(10,0){2}{\circle{2}}
\put(5,2){\makebox(0,0){$\scriptstyle 2$}}
\Eee
\end{picture}
& 1 & 1 & 4 & 0 & (2) & 2 & $k-1$
\\[.15in]
\begin{picture}(95,8)(-10,-4)\setlength{\unitlength}{2.5pt}\thicklines
\ooo
\qbezier(0.8,0.6)(10,5)(19.2,0.6)
\end{picture}
& 1 & 2 & 1 & 1 & (1,1) & 1 & $2k+1$
\\
\hline \hline
&&&&&&&\\[-.1in]
\begin{picture}(95,8)(-10,-4)\setlength{\unitlength}{2.5pt}\thicklines
\multiput(0,0)(10,0){2}{\circle{2}}
\put(5,2){\makebox(0,0){$\scriptstyle 3$}}
\Eee
\end{picture}
& 2 & 1 & 9 & 0 & (3) & 3 & $k-2$
\\[.15in]
\begin{picture}(95,8)(-10,-4)\setlength{\unitlength}{2.5pt}\thicklines
\multiput(0,0)(10,0){2}{\circle{2}}
\put(5,3.5){\makebox(0,0){$\scriptstyle 2$}}
\put(5,-3.5){\makebox(0,0){$\scriptstyle 2$}}
\qbezier(0.8,0.6)(5,2)(9.2,0.6)
\qbezier(0.8,-0.6)(5,-2)(9.2,-0.6)
\end{picture}
& 2 & 1 & 16 & 0 & (4) & 4 & $\binom{k-2}{2}$
\\[.15in]
\begin{picture}(95,8)(-10,-4)\setlength{\unitlength}{2.5pt}\thicklines
\ooo
\qbezier(0.8,0.6)(10,4)(19.2,0.6)
\qbezier(0.8,-0.6)(10,-4)(19.2,-0.6)
\end{picture}
& 2 & 2 & 1 & 1 & (2,2) & 2 & $\binom{2k}{2}$
\\[.15in]
\begin{picture}(95,8)(-10,-4)\setlength{\unitlength}{2.5pt}\thicklines
\ooo
\qbezier(0.8,0.6)(10,4)(19.2,0.6)
\put(5,-2){\makebox(0,0){$\scriptstyle 2$}}
\Eee
\end{picture}
& 2 & 2 & 4 & 1 & (3,1) & 3 & $2k(k-2)$
\\[.15in]
\begin{picture}(95,8)(-10,-4)\setlength{\unitlength}{2.5pt}\thicklines
\ooo
\qbezier(0.8,0.6)(10,4)(19.2,0.6)
\put(15,-2){\makebox(0,0){$\scriptstyle 2$}}
\eEe
\end{picture}
& 2 & 2 & 4 & 0 & (1,3) & 2 & $2k(k-1)$
\\[.15in]
\begin{picture}(95,8)(-10,-4)\setlength{\unitlength}{2.5pt}\thicklines
\oooo
\qbezier(0.8,0.6)(15,6)(29.2,0.6)
\end{picture}
& 2 & 3 & 1 & 1 & (1,1,1) & 1 & $3(k+1)$
\\[.15in]
\begin{picture}(95,8)(-10,-4)\setlength{\unitlength}{2.5pt}\thicklines
\oooo
\qbezier(0.8,0.6)(10,5)(19.2,0.6)
\qbezier(10.8,0.6)(20,5)(29.2,0.6)
\end{picture}
& 2 & 3 & 1 & 1 & (1,2,1) & 1 & $k(4k+5)$
\\[-.05in]
\end{tabular}
\end{center}
\caption{Templates with $\delta(\Gamma)\le 2$}
\label{fig:templates}
\end{figure}

For the remainder of this proof, we allow disconnected labeled floor
diagrams~$\Dcal$. The degree~$d$ and cogenus~$\delta$ 
of such a diagram are determined 
from the degrees $d_j$ and cogenera~$\delta_j$ of its connected
components by the
formulas~\eqref{eq:total-degree}--\eqref{eq:total-cogenus}.
As we noted earlier (see the comment following
Corollary~\ref{cor:severi-combin}), 
the Severi degree~$N^{d,\delta}$ is obtained by 
counting the markings of all such diagrams 
(with the given~$d$ and~$\delta$)
with the usual multiplicities~$\mu(\Dcal)$. 

Let $\Dcal$ be a (possibly disconnected)
labeled floor diagram of degree~$d$ and cogenus~$\delta$. 
It will be convenient to add an extra vertex $d+1$ to the vertices
$1,\dots,d$ of~$\Dcal$, and connect each vertex $v$ in~$\Dcal$ to this
new vertex by $1-\divv(v)$ new edges of weight~1.
Let $\Dcal'$ denote the resulting diagram. 
To illustrate, applying this procedure to the labeled floor
diagram~$\Dcal$ shown in~\eqref{eq:fd4} results in the
diagram~$\Dcal'$ drawn below: 
\begin{equation}
\label{eq:fd4-extended}
\begin{picture}(80,25)(30,-4)\setlength{\unitlength}{4pt}\thicklines
\oooo
\put(40,0){\circle{2}}
\Eee\eOe\eeE
\qbezier(30.8,0.6)(35,4)(39.2,0.6)
                  \qbezier(30.8,-0.6)(35,-4)(39.2,-0.6)
\put(31,0){\line(1,0){8}}
\qbezier(20.6,0.6)(30,9)(39.4,0.8)
\put(25,1.5){\makebox(0,0){$2$}}
\end{picture}
\end{equation}

Upon removal of all short edges from~$\Dcal'$, one obtains a (uniquely
defined) collection of non-overlapping templates. 
To be pedantic, let $\Gamma_1,\dots,\Gamma_m$ be these templates,
listed left to right. 
Denoting the leftmost vertex of $\Gamma_i$ by~$k_i$
(we call $k_i$ the \emph{offset} of~$\Gamma_i$), we 
then have 
\begin{equation}
\label{eq:ki+l}
k_i+\ell(\Gamma_i)\le k_{i+1} \ \ \text{for $1\le i\le m-1$}
\end{equation}
(so that the $\Gamma_i$ do not overlap), and also
\begin{equation}
\label{eq:km+l}
k_m+\ell(\Gamma_m)\le d+\varepsilon(\Gamma_m)
\end{equation}
(so that $\Gamma_m$ properly fits at the right end). 

For $q\le d+1$, let $a_q$ denote the number of short edges connecting 
$q-1$ to~$q$ in~$\Dcal'$. 
The divergence condition implies that in~$\Dcal'$,
the total weight of the edges $p\stackrel{e}{\longrightarrow} r$ 
with $p<q\le r$ is precisely $q-1$. 
Then $q-1-a_q$ is the total weight of such edges contained in 
(one of) the templates $\Gamma_1,\dots,\Gamma_m$. 
If say $q=k_i+j$ is located in~$\Gamma_i$ 
(so~that $q$ corresponds to~$v_j$ in the notation of
Definition~\ref{def:template}), 
then we have $\varkappa_j=q-1-a_q=k_i+j-1-a_q\le k_i+j-1$,
implying (cf.~\eqref{eq:kmin}) that 
\begin{equation}
\label{eq:ki>kmin}
k_i\ge k_{\min}(\Gamma_i) \ \ \text{for $1\le i\le m$.}
\end{equation}

Conversely, given a sequence of isomorphism types of 
templates $\Gamma_1,\dots,\Gamma_m$
and an increasing sequence of positive integers 
$k_1<\cdots<k_m$ satisfying the inequalities 
\eqref{eq:ki+l}--\eqref{eq:ki>kmin},
there is a unique (possibly disconnected) labeled floor
diagram~$\Dcal$ whose modification~$\Dcal'$ is obtained by placing
each $\Gamma_i$ with an offset~$k_i$,
and adding short edges as needed. 
Specifically, we add $a_q$ edges between $q-1$ and~$q$, with $a_q$ 
given by
\begin{equation}
\label{eq:aq}
a_q=\begin{cases}
q-1-\varkappa_j & \text{if $q=k_i+j$ for $1\le i\le m$ 
  and $1\le j\le\ell(\Gamma_i)$;}\\
q-1 & \text{otherwise.}
\end{cases}
\end{equation}

\begin{lemma}
\label{lem:cogenus-template}
The cogenus $\delta=\delta(\Dcal)$ 
(as defined by~\eqref{eq:total-cogenus}) is equal to 
\begin{equation}
\label{eq:delta=sum=of-deltas}
\delta=\sum_{i=1}^m \delta(\Gamma_i), 
\end{equation}
where the numbers $\delta(\Gamma_i)$ are defined
by~\eqref{eq:delta(Gamma)}. 
\end{lemma}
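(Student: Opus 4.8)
The plan is to reduce both sides of \eqref{eq:delta=sum=of-deltas} to pure edge- and weight-counts and then reconcile them via a single double count over the cuts separating consecutive vertices. The whole argument is bookkeeping; the only step requiring genuine care is the first reformulation of the cogenus, after which everything is forced.

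First I would record a global reformulation of the cogenus valid for (possibly disconnected) diagrams. Starting from the definition \eqref{eq:total-cogenus}, together with \eqref{eq:total-degree}, the identity $\sum_{\{j,j'\}} d_j d_{j'} = \frac12\bigl(d^2 - \sum_j d_j^2\bigr)$, and the fact that a connected component of degree $d_j$ and genus $g_j$ has $d_j+g_j-1$ edges, a short computation collapses the expression $\sum_j \delta_j + \sum_{\{j,j'\}} d_j d_{j'}$ into the single quantity
\[
\delta(\Dcal)=\binom{d}{2}-|E(\Dcal)|,
\]
where $|E(\Dcal)|$ is the number of edges of $\Dcal$ (weights ignored). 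This is the key simplification: it turns the opaque definition of cogenus into a global edge count, and I would sanity-check it against Example~\ref{example:lfd4}.

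Next I would rewrite the template cogenus in the same currency. Since an edge $v_i\to v_j$ of weight $w$ crosses exactly the $j-i$ cuts lying between positions $i$ and $j$, contributing $w$ to the crossing weight at each, summing \eqref{eq:delta(Gamma)} gives $\delta(\Gamma)=\sum_{j=1}^{\ell(\Gamma)}\varkappa_j(\Gamma)-|E(\Gamma)|$, with $|E(\Gamma)|$ the number of edges of $\Gamma$. Passing from $\Dcal$ to its extension $\Dcal'$ (which has exactly $d$ more edges, since the added weight-one edges number $\sum_v(1-\divv(v))=d$) and using $|E(\Dcal')|=\sum_q a_q+\sum_i|E(\Gamma_i)|$, the desired identity \eqref{eq:delta=sum=of-deltas} reduces to the claim that $\binom{d}{2}+d-\sum_q a_q$ equals $\sum_i\sum_j\varkappa_j(\Gamma_i)$.

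The final step is the double count that closes this gap. For each $q$ with $2\le q\le d+1$, the divergence bookkeeping already recorded in the text shows that the total weight crossing between $q-1$ and $q$ in $\Dcal'$ equals $q-1$; this weight splits into the $a_q$ short edges and the template edges, whose contribution is precisely $\varkappa_j(\Gamma_i)$ when $q=k_i+j$ lies inside $\Gamma_i$ and is zero otherwise, which is exactly what \eqref{eq:aq} encodes. Summing $q-1$ over the $d$ cuts yields $\frac{d(d+1)}{2}=\sum_q a_q+\sum_i\sum_j\varkappa_j(\Gamma_i)$, and substituting this into the reduced identity makes both sides agree after the elementary simplification $\binom{d}{2}-\frac{d(d+1)}{2}=-d$. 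I do not expect a real obstacle beyond assembling these three identities in the right order; the subtle point is only that, for disconnected diagrams, the cross terms $d_jd_{j'}$ and the per-component genera must be combined correctly to obtain the global formula $\delta(\Dcal)=\binom{d}{2}-|E(\Dcal)|$.
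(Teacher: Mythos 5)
Your proof is correct, and it takes a genuinely different route from the paper's. The paper argues by successive \emph{replacement}: removing a template $\Gamma_k$ and replacing each of its edges $v_i\to v_j$ of weight $w(e)$ by $(j-i)$ groups of $w(e)$ parallel short edges adds $(j-i)\,w(e)-1$ edges and hence (at fixed degree $d$) decreases the cogenus by exactly that amount; iterating until only short edges remain --- a diagram of cogenus $0$ --- gives \eqref{eq:delta=sum=of-deltas} in one stroke. You instead make explicit the invariant on which that replacement step silently rests, namely $\delta(\Dcal)=\binom{d}{2}-|E(\Dcal)|$ (your derivation of this for disconnected diagrams, via $\sum_j\binom{d_j}{2}+\sum_{\{j,j'\}}d_jd_{j'}=\binom{d}{2}$, is exactly right), then rewrite $\delta(\Gamma)=\sum_j\varkappa_j(\Gamma)-|E(\Gamma)|$ and close with the flow identity $q-1=a_q+\varkappa_j(\Gamma_i)$ summed over the $d$ cuts --- an identity the paper records before the lemma (it is the content of \eqref{eq:aq}) but does not use in its proof. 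The trade-off: the paper's argument is shorter and more conceptual, but it leaves implicit why adding an edge decreases cogenus by one even when the replacement changes the component structure of a disconnected diagram --- which is precisely what your closed formula $\delta=\binom{d}{2}-|E|$ justifies. Your version costs more bookkeeping but is fully self-contained, handles disconnectedness transparently, and produces that closed formula as a reusable byproduct.
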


\begin{proof}
Note that removing a template~$\Gamma_k$ from the list
would result in replacing each of its edges
$v_i\,\stackrel{e}{\longrightarrow}\, v_j$ of weight~$w(e)$ 
by the appropriate collection of short edges 
\begin{equation*}
\begin{picture}(50,16)(25,-6)
\setlength{\unitlength}{3pt}
\thicklines
\multiput(0,0)(10,0){5}{\circle{2}}
%\Eee\eOe\eeO\eeE
\qbezier(0.8,0.6)(5,4)(9.2,0.6)
\qbezier(0.8,-0.6)(5,-4)(9.2,-0.6)
\qbezier(0.95,0.3)(5,1.5)(9.05,0.3)
\qbezier(0.95,-0.3)(5,-1.5)(9.05,-0.3)

\qbezier(10.8,0.6)(15,4)(19.2,0.6)
\qbezier(10.8,-0.6)(15,-4)(19.2,-0.6)
\qbezier(10.95,0.3)(15,1.5)(19.05,0.3)
\qbezier(10.95,-0.3)(15,-1.5)(19.05,-0.3)

\qbezier(20.8,0.6)(25,4)(29.2,0.6)
\qbezier(20.8,-0.6)(25,-4)(29.2,-0.6)
\qbezier(20.95,0.3)(25,1.5)(29.05,0.3)
\qbezier(20.95,-0.3)(25,-1.5)(29.05,-0.3)

\qbezier(30.8,0.6)(35,4)(39.2,0.6)
\qbezier(30.8,-0.6)(35,-4)(39.2,-0.6)
\qbezier(30.95,0.3)(35,1.5)(39.05,0.3)
\qbezier(30.95,-0.3)(35,-1.5)(39.05,-0.3)

\put(0,-3){\makebox(0,0){$v_i$}}
\put(41,-3){\makebox(0,0){$v_j$}}

\end{picture}
\end{equation*}
in which each pair of consecutive vertices is connected by $w(e)$
edges. Such a replacement increases the total number of edges by 
$(j-i)\,w(e)-1$, thus decreasing the cogenus by the same amount.
Since removing all templates would yield a diagram of cogenus~$0$, the
claim follows.
\end{proof}

In order to write down the formula for~$N^{d,\delta}$, 
we will need to keep track of the markings of~$\Dcal$ (or of~$\Dcal'$);
these break down into markings of individual templates and
associated short edges. 
For a template~$\Gamma$ and an offset value~$k\in\ZZ_{>0}$, 
let $\Gamma_{(k)}$ denote the poset
obtained from $\Gamma$ by first adding $k+j-1-\varkappa_j$ short edges 
connecting $j-1$ to~$j$ (for $1\le j\le\ell(\Gamma)$;
cf.~\eqref{eq:aq}),
then inserting an extra vertex in the middle of each edge of the
resulting graph. 
Let $P(\Gamma,k)$ denote the number of linear extensions
of~$\Gamma_{(k)}$, considered modulo automorphisms of~$\Gamma_{(k)}$
which fix~$\Gamma$ (as in Definition~\ref{def-marking}). 
With this definition, the number $\nu(\Dcal)$ of markings of a labeled
floor diagram~$\Dcal$ is given by
\[
\prod_{i=1}^m P(\Gamma_i,k_i), 
\]
where the templates $\Gamma_i$ and their respective offsets $k_i$ are
the same as before. 

\begin{lemma}
For a fixed template $\Gamma$, 
the values $P(\Gamma,k)$, for $k\ge k_{\min}(\Gamma)$, 
are given by a polynomial in~$k$ whose degree is equal to 
the number of edges in~$\Gamma$. 
\end{lemma}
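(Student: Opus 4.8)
The plan is to peel the $\Theta(k)$ short-edge midpoints off of $\Gamma_{(k)}$, reducing the count to a fixed, $k$-independent ``core'' so that the dependence on $k$ becomes a visible product of binomial coefficients. Write $|E(\Gamma)|$ for the number of edges of $\Gamma$. First I would fix the core poset $Q$ consisting of the backbone vertices $v_0<v_1<\cdots<v_\ell$ (which form a chain, since a marking, as in Definition~\ref{def-marking}, must respect the original order on the template vertices) together with the midpoint of each template edge $v_i\to v_j$, subject to $v_i<\mathrm{mid}<v_j$, where midpoints of parallel edges of equal weight are regarded as indistinguishable. This poset $Q$ does not depend on $k$. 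The claim I would establish is that a linear extension of $\Gamma_{(k)}$, taken modulo automorphisms fixing $\Gamma$, is the same datum as a pair consisting of (i) a linear extension $\bar\sigma$ of $Q$ modulo $\operatorname{Aut}(Q)$, and (ii) for each $j\in\{1,\dots,\ell\}$, a way of distributing the $a_j=k+j-1-\varkappa_j$ indistinguishable type-$j$ short midpoints among the positions strictly between $v_{j-1}$ and $v_j$ determined by $\bar\sigma$. The thing to check is that the group of $\Gamma$-fixing automorphisms of $\Gamma_{(k)}$ splits as a direct product of the symmetric groups permuting the short midpoints of each type with $\operatorname{Aut}(Q)$, so that the two layers of data are genuinely independent, and that type-$j$ short midpoints are confined to the (pairwise disjoint) regions between consecutive backbone vertices.

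Writing $g_j(\bar\sigma)$ for the number of long-edge midpoints lying strictly between $v_{j-1}$ and $v_j$ in $\bar\sigma$, the number of distributions in (ii) is the number of ways to place $a_j$ indistinguishable balls into the $g_j(\bar\sigma)+1$ gaps, namely $\binom{a_j+g_j(\bar\sigma)}{g_j(\bar\sigma)}$. This yields the closed form
\[
P(\Gamma,k)=\sum_{\bar\sigma}\prod_{j=1}^{\ell}\binom{k+j-1-\varkappa_j+g_j(\bar\sigma)}{g_j(\bar\sigma)},
\]
the sum being over the finitely many linear extensions $\bar\sigma$ of $Q$ modulo automorphisms. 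For $k\ge k_{\min}(\Gamma)$ all the quantities $a_j$ are nonnegative (this is exactly the meaning of~\eqref{eq:kmin}), so each factor genuinely counts a set of insertions; as a function of $k$ it is a polynomial of degree $g_j(\bar\sigma)$, and hence $P(\Gamma,k)$ is a polynomial in $k$ on the range $k\ge k_{\min}(\Gamma)$.

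It remains to pin down the degree, and this is where the one nontrivial observation enters. Since the backbone is a chain, each long-edge midpoint falls strictly between exactly one consecutive pair $v_{j-1}<v_j$ in any given $\bar\sigma$; summing over $j$ therefore gives $\sum_{j=1}^{\ell}g_j(\bar\sigma)=|E(\Gamma)|$ for \emph{every} $\bar\sigma$. Consequently each summand above is a polynomial in $k$ of degree exactly $|E(\Gamma)|$ with leading coefficient $\prod_j \tfrac{1}{g_j(\bar\sigma)!}>0$; because these leading coefficients are all positive there is no cancellation in the sum, and so $P(\Gamma,k)$ has degree precisely equal to the number of edges of $\Gamma$, as claimed.

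The main obstacle is the first step: making the decomposition rigorous and verifying that the $\Gamma$-fixing automorphism group really is the product of the short-midpoint symmetric groups and $\operatorname{Aut}(Q)$, together with the bookkeeping of interleaving indistinguishable short midpoints with the long midpoints in each region. By contrast, the invariance $\sum_j g_j(\bar\sigma)=|E(\Gamma)|$ is the clean fact that simultaneously delivers both the exact degree and the absence of cancellation, and I would isolate it as the conceptual heart of the argument. (As a sanity check, one can confirm the closed form against Figure~\ref{fig:templates}: for the single edge $v_0\to v_2$ one gets $g=(1,0)$ and $g=(0,1)$ over the two extensions $\bar\sigma$, giving $k+(k+1)=2k+1$.)
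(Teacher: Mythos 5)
Your proposal is correct and takes essentially the same route as the paper: the paper's proof also fixes the relative order of the template vertices and long-edge midpoints (your poset $Q$, its ``finitely many choices''), and then counts, gap by gap, the shuffles of the $k+j-1-\varkappa_j$ indistinguishable short midpoints with the $b_j=g_j(\bar\sigma)$ long midpoints, arriving at the same sum of products $\prod_j\binom{k+j-1-\varkappa_j+b_j}{b_j}$. The only difference is explicitness: the paper ends with ``and the claim follows,'' while you spell out the two facts it leaves implicit, namely the invariance $\sum_j g_j(\bar\sigma)=|E(\Gamma)|$ across all core orderings and the positivity of leading coefficients that rules out cancellation, which together give the exact degree.
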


\begin{proof}
First linearly order the vertices of $\Gamma$ together
with the midpoints of its edges.
For each of these (finitely many) choices, 
we need to count the number of ways of completing it to a linear
extension of~$\Gamma_{(k)}$ (here everything is done modulo the appropriate
automorphism group). 
Such a completion amounts to choosing, for each $j\le\ell(\Gamma)$, 
a particular shuffle of the $k+j-1-\varkappa_j$ (unordered) midpoints
of short edges connecting $j-1$ to~$j$ with a fixed (i.e., independent
of~$k$) number $b_j$ of midpoints of edges of~$\Gamma$. 
Hence the answer is
\[
\prod_j \binom{k+j-1-\varkappa_j+b_j}{b_j}\,, 
\]
and the claim follows. 
\end{proof}

Figure~\ref{fig:templates} shows 
the polynomials $P(\Gamma,k)$ for the templates~$\Gamma$ with
$\delta(\Gamma)\le 2$. 

\medskip

Putting all the ingredients together, 
we see that the Severi degree $N^{d,\delta}$ is given by
\begin{equation}
\label{eq:Nd-delta=sumsum}
N^{d,\delta}
=\sum_{m=1}^\delta\ \sum_{\Gamma_1,\dots,\Gamma_m}
\Biggl(\prod_{i=1}^m \ \mu(\Gamma_i)\Biggr) 
\sum_{k_1,\dots,k_m} \ \prod_{i=1}^m \ P(\Gamma_i,k_i), 
\end{equation}
where the second sum is over all $m$-tuples of templates
$(\Gamma_1,\dots,\Gamma_m)$ satisfying~\eqref{eq:delta=sum=of-deltas},
and the third sum is over $m$-tuples of integer offsets
$(k_1,\dots,k_m)$ satisfying \eqref{eq:ki+l}--\eqref{eq:ki>kmin}. 
(This calculation is illustrated for $\delta=2$ in
Example~\ref{example:delta=2} below.) 
Now let us write the aforementioned third sum as 
\[
%N^{d,\delta}
%=\sum_{m=1}^\delta\ \sum_{\Gamma_1,\dots,\Gamma_m}
\sum_{\substack{k_m\ge k_{\min}(\Gamma_m)\\[.05in] 
    k_m\le d-\ell(\Gamma_m)+\varepsilon(\Gamma_m)}}P(\Gamma_m,k_m)
\cdots
\sum_{\substack{k_2\ge k_{\min}(\Gamma_2)\\[.05in] 
    k_2\le k_3-\ell(\Gamma_2)}}P(\Gamma_2,k_2)
\ \sum_{\substack{k_1\ge k_{\min}(\Gamma_1)\\[.05in] 
    k_1\le k_2-\ell(\Gamma_1)}}P(\Gamma_1,k_1).  
\]
If $P(k)$ is given by a polynomial in~$k$ 
for $k\ge c$, and $a$ and~$b$ are positive integer constants,
then $\displaystyle\sum_{\substack{k\ge a\\ k\le n-b}} P(k)$ 
is given by a polynomial in~$n$ (of one degree higher)
for $n\ge \max(a+b,c)$. 
Iterating this argument, we conclude that $N^{d,\delta}$ is given by a
polynomial in~$d$ if 
\begin{equation}
\label{eq:d-lower-bound}
d\ge
k_{\min}(\Gamma_1)+\ell(\Gamma_1)+\cdots+\ell(\Gamma_m)
-\varepsilon(\Gamma_m) 
\end{equation}
for any allowable choice of $\Gamma_1,\dots,\Gamma_m$. 
It is not hard to see that the degree of the resulting polynomial 
is~$2\delta$; indeed, the maximal value of~$m$ is~$\delta$, and the maximal
total number of edges in the templates involved is $\delta$ as well. 
Also, the right-hand side of~\eqref{eq:d-lower-bound} can be seen to
be at most~$2\delta$, providing a threshold value. 
\end{proof}

\begin{example}
\label{example:delta=2}
For $\delta=2$, substituting the data from Figure~\ref{fig:templates} 
into formula~\eqref{eq:Nd-delta=sumsum}, we obtain:
\begin{align*}
N^{d,2}&=\sum_{k=2}^{d-1}
\biggl(9(k-2)+16\textstyle\binom{k-2}{2}+\binom{2k}{2}+8k(k-2)\biggr)
\\
&+\sum_{k=1}^{d-2}\bigl(8k(k-1)+3(k+1)+k(4k+5)\bigr)
\\
&+\sum_{k_2\le d-1} \sum_{k_1\le k_2-1} 
  \bigl(16(k_1-1)(k_2-1) +4(k_1-1)(2k_2+1)\bigr)
\\
&+\sum_{k_2\le d-1} \sum_{k_1\le k_2-2}
   \bigl((2k_1+1)\cdot 4(k_2-1) + (2k_1+1)(2k_2+1)\bigr),
\end{align*}
which after tedious calculations (or with a help of your favourite
software) yields~\eqref{eq:Nd,gmax-2}. 
\end{example}

\section{Enumeration of labeled floor diagrams of genus~$0$}
\label{sec:enumeration}

\begin{theorem}
\label{th:cayley}
The number of labeled floor diagrams of degree~$d$ and genus~$0$
is~$d^{d-2}$.
\end{theorem}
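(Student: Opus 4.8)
The plan is to deduce the count from a bijection with labeled trees, so that Cayley's formula gives $d^{d-2}$ directly. First I would record what genus~$0$ forces. A connected graph on $d$ vertices with $d+g-1=d-1$ edges is a tree, so the underlying graph of a genus-$0$ labeled floor diagram $\Dcal$ is a tree $T$ on the ordered set $\{1,\dots,d\}$; the compatibility condition then orients every edge from its smaller to its larger endpoint, carrying no extra information. Thus $\Dcal$ amounts to a labeled tree $T$ together with a positive integer weight function $w$ satisfying the divergence inequalities $\divv(v)\le 1$. The point to stress at the outset is that the forgetful map $\Dcal\mapsto T$ is \emph{neither} injective (a fixed tree may admit several admissible weightings) \emph{nor} surjective (some trees admit none: already for $d=3$ the star centered at the vertex~$1$ has no valid weighting), so the coincidence with $d^{d-2}=\#\{\text{labeled trees}\}$ is not tautological and requires a genuine bijection.

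The key reduction I would carry out is to replace the weights by a ``charge'' statistic. For each vertex $v$ put $s(v)=1-\divv(v)$, so that $s(v)\in\ZZ_{\ge 0}$ and, since $\sum_v\divv(v)=0$, one has $\sum_v s(v)=d$. Deleting an edge $e=(u\to v)$ splits $T$ into the tail-side component $A_e$ (the one containing the smaller endpoint $u$) and its complement; summing divergences over $A_e$ telescopes along the internal edges and leaves only the contribution of $e$, giving $w(e)=\sum_{x\in A_e}\divv(x)=|A_e|-\sum_{x\in A_e}s(x)$. Conversely, for any tree $T$ and any $s\colon\{1,\dots,d\}\to\ZZ_{\ge0}$ with $\sum_v s(v)=d$, this formula is the unique tree-flow realizing the net supplies $1-s(v)$; it is automatically integral and realizes $\divv(v)=1-s(v)\le 1$ at every vertex, and its positivity is exactly the family of inequalities $\sum_{x\in A_e}s(x)\le|A_e|-1$ over the edges $e$. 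Hence genus-$0$ floor diagrams are in bijection with \emph{charged trees}: pairs $(T,s)$ subject to these ``parking-type'' inequalities. (As a check, for $d=3$ the three trees on $\{1,2,3\}$ contribute $2$, $1$, and $0$ charged trees, total $3=3^{1}$, matching Appendix~A.)

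It then remains to prove that charged trees are counted by $d^{d-2}$, and this is where I expect the real work to lie. My first approach would be to build an explicit bijection with plain labeled trees, processing the vertices in increasing order and using the charges $s(v)$ as reattachment instructions that read off a Pr\"ufer-type code; the fact that the vertex $1$ is always a leaf joined by a weight-$1$ edge provides a clean anchor for such a recursion. The genuine obstacle is that the defining inequalities are indexed by the \emph{label order} (through the tail-side components $A_e$), so they do not align with any fixed rooting of $T$; reconciling this label-driven constraint with a canonical tree rooting is the delicate step, and is precisely what must be engineered for the bijection to be well defined and invertible. Should a direct bijection prove unwieldy, I would fall back on a generating-function recursion: delete the maximal vertex~$d$, observe that the tree breaks into subtrees $T_j$ each of which is a charged tree on a smaller label set in which the attaching vertex $u_j$ satisfies $\divv_{T_j}(u_j)\le 0$ and contributes $1-\divv_{T_j}(u_j)$ choices of weight for the edge to~$d$, and encode this in a bivariate generating function tracking size and the divergence of the distinguished attaching vertex; solving the resulting functional equation and matching it against the tree generating function $\sum_n n^{n-2}x^n/n!$ would close the argument. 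Either way I would verify the outcome against the tabulated diagrams with $d\le 4$ in Appendix~A.
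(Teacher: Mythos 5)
Your reduction to charged trees is correct and cleanly verified: on a tree the weights are the unique flow realizing the prescribed divergences, so a genus-$0$ labeled floor diagram is indeed the same thing as a pair $(T,s)$ with $s\colon\{1,\dots,d\}\to\ZZ_{\ge0}$, $\sum_v s(v)=d$, and $\sum_{x\in A_e}s(x)\le|A_e|-1$ for every edge. But this is only a reformulation, and the theorem is the count. Neither of your two closing strategies is actually executed: the Pr\"ufer-type bijection is abandoned exactly at the obstruction you yourself identify (the constraints are indexed by the label order, not by any rooting), and the generating-function route ends with the unproved assertion that ``solving the resulting functional equation \dots would close the argument.'' As written, the proposal does not prove the statement; it has a genuine gap precisely where you say the real work lies.

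The frustrating part is that your fallback (b) is one line away from the paper's actual proof, and that line makes all the heavier machinery unnecessary. Delete the maximal vertex $d$, so the diagram falls into subdiagrams $\Dcal_1,\dots,\Dcal_k$ on the blocks $S_1,\dots,S_k$ of an unordered set partition of $\{1,\dots,d-1\}$, each joined to $d$ by one weighted edge. As you note, a fixed attaching vertex $v\in S_i$ admits exactly $1-\divv_i(v)$ choices of weight (any positive integer up to $1-\divv_i(v)$). Now use that the divergences of $\Dcal_i$ sum to zero (every edge contributes $+w$ to its tail and $-w$ to its head), so that $\sum_{v\in S_i}\bigl(1-\divv_i(v)\bigr)=|S_i|$: each subdiagram can be re-attached to $d$ in exactly $|S_i|$ ways, counting both the choice of attaching vertex and the weight. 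There is nothing to track about the divergence of the attaching vertex and no functional equation to solve; one gets directly that $\ell(d)=\sum_k\sum_{S_1\cup\cdots\cup S_k=\{1,\dots,d-1\}}\prod_i \ell(|S_i|)\,|S_i|$, which is verbatim the classical root-deletion recursion satisfied by $t(d)=d^{d-2}$, and induction on $d$ finishes. This is the paper's proof, carried out on floor diagrams directly, so even your (correct) charged-tree translation is not needed; and the paper afterwards extracts explicit recursive bijections from this recursion (its Definition of the diagram-to-tree map), which also dissolves your worry about reconciling label order with a canonical rooting --- the only rooting ever used is at the maximal label.
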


Recall that $d^{d-2}$ is also the number of labeled trees on~$d$
vertices, or equivalently the number of trees on the vertex set
$\{1,\dots,d\}$.
This classical result is commonly known as Cayley's formula
(see for example \cite[Theorem~2.1]{van-lint-wilson} and
\cite[pages~25 and~66]{ec2}). %for the history of this result.)

\begin{proof}[Proof of Theorem~\ref{th:cayley}]
Let $\ell(d)$  denote the number of labeled floor diagrams of
  degree~$d$ and genus~$0$.
Let $t(d)=d^{d-2}$ denote the number of labeled trees on $d$ vertices.
Our goal is to show that $\ell(d)=t(d)$.

It is well known, and easy to deduce,
%either from the combinatorial
%definition, or from the formula for~$t(d)$)
that
\begin{equation}
\label{eq:stirling}
t(d)=\sum_k \sum_{\ S_1\cup\cdots\cup S_k=\{1,\dots,d-1\}\ }
  \prod_{i=1}^k t(|S_i|)|S_i|,
\end{equation}
where the second sum is over all \emph{unordered} set partitions of
the set $\{1,\dots,d-1\}$ into $k$ nonempty blocks $S_1,\dots,S_k$.
(Thus the number of summands is $S(d-1,k)$,
the Stirling number of the second kind \cite[Section~1.4]{ec1}.)
Formula~\eqref{eq:stirling} is an enumerative encoding of the
decomposition of a labeled tree into a root vertex~$d$ and a
collection of disjoint subtrees attached to~$d$, whose vertex sets
form a set partition of $\{1,\dots,d-1\}$.

Together with the initial condition $t(1)=1$, the
recurrence~\eqref{eq:stirling} uniquely determines the sequence~$t(d)$.
Hence the theorem will follow if we show that the numbers~$\ell(d)$
satisfy the same recurrence.
This can be established in the manner analogous to the proof
of~\eqref{eq:stirling} given above.
Every labeled floor diagram $\Dcal$ of genus~$0$ and degree~$d$
is a particular kind of a weighted labeled tree on the vertex set
$\{1,\dots,d\}$.
As such, it can be disassembled into the root vertex~$d$ and an
unordered collection of genus-$0$ floor diagrams $\Dcal_1,\dots,\Dcal_k$
supported on disjoint nonempty vertex sets $S_1,\dots,S_k$
and joined with~$d$ by weighted edges. There is exactly one such
weighted edge connecting $d$ to each set~$S_i$.
It remains to check that for a given labeled floor diagram~$\Dcal_i$
on a vertex set $S_i\subset \{1,\dots,d-1\}$,
there are exactly $|S_i|$ ways to connect
a vertex $v\in S_i$ to~$d$ by a weighted edge
$v\stackrel{e}{\longrightarrow}d$ so that the resulting
weighted tree on $|S_i|+1$ vertices is a labeled floor diagram.
(Gluing those diagrams together will produce a labeled floor diagram
on $\{1,\dots,d\}$.)
To prove this, note that for a fixed~$v$,
the only restriction on the weight $w(e)$ comes from the divergence
condition~\eqref{eq:div}.
The latter implies that $w(e)$ can be chosen to be any positive
integer not exceeding $1-\divv_i(v)$, where $\divv_i(v)$ denotes the
divergence at~$v$ within the subdiagram~$\Dcal_i$.
It follows that the number of choices in question is
\begin{equation}
\label{eq:1-div}
\sum_{v\in S_i} (1-\divv_i(v))=|S_i|,
\end{equation}
as desired.
\end{proof}

The proof of Theorem~\ref{th:cayley} given above
can be used to construct explicit bijections between
labeled floor diagrams of genus~$0$ and degree~$d$, on one hand,
and labeled trees on $d$ vertices, on another.
One family of bijections of this kind,
illustrated in Appendix~A, % Figure~\ref{fig:g=0},
is built recursively as follows.

\begin{definition}[\emph{A bijection between labeled floor diagrams and
   labeled trees}]
\label{def:diag-to-tree}
Suppose we have already defined such bijections for labeled floor
diagrams of genus~$0$ and any degree~$<d$.
Now, for a diagram $\Dcal$ of degree~$d$, do the following.
Decompose $\Dcal$ into the root vertex~$d$ and
subdiagrams~$\Dcal_1,\dots,\Dcal_k$ on vertex sets $S_1,\dots,S_k$, as
described in the proof of Theorem~\ref{th:cayley} above.
Let $T_1,\dots,T_k$ denote the trees on vertex sets $S_1,\dots,S_k$
that correspond to $\Dcal_1,\dots,\Dcal_k$, respectively,
under the appropriate bijections.
The tree~$T$ associated to~$\Dcal$ under the bijection in question is
constructed by connecting each tree $T_i$ to the root vertex~$d$ by a single
edge $e'$ that is going to be determined by the unique weighted edge
$v\stackrel{e}{\longrightarrow}d$
connecting $\Dcal_i$ to~$d$ in~$\Dcal$.
It remains to describe the rule that determines~$e'$ from~$e$.
We have already checked (see~\eqref{eq:1-div}) that the total number
of choices for~$e$ is equal to~$|S_i|$.
Let us record those choices in an ordered list as the vertex~$v$ moves left
to right within~$\Dcal_i$; for a given~$v$, we record the choices
starting with the \emph{largest} possible weight value $w(e)=1-\divv_i(v)$ and
decreasing it until we reach the smallest possible weight value
$w(e)=1$.
We similarly list the $|S_i|$ choices available for the
edge $v'\stackrel{e'}{\longrightarrow}d$ connecting $S_i$ to~$d$ ;
the ordering is determined by the (left-to-right) ordering of the
vertices $v'\in S_i$.
We finally match the choices on both lists in the order they are
listed.
\end{definition}

\begin{example}
To illustrate, consider the labeled floor diagram~$\Dcal_i$ shown
below alongside with the tree~$T_i$ associated to it:
\begin{center}
\begin{picture}(95,38)(-10,-20)\setlength{\unitlength}{2.5pt}\thicklines
\oooo\EEe\eEe\eeE
\put(25,2){\makebox(0,0){$\scriptstyle 2$}}
\put(15,-6){\makebox(0,0){$\Dcal_i$}}
\end{picture}
\qquad
\begin{picture}(95,38)(-10,-20)\setlength{\unitlength}{2.5pt}\thicklines
\oooo\EEe\eEe\eEE
\put(15,-6){\makebox(0,0){$T_i$}}
\end{picture}
\end{center}
There are $4$~ways to augment~$\Dcal_i$ by a single weighted edge
$v\stackrel{e}{\longrightarrow}d$
(here $d$~is a vertex located to the right of~$\Dcal_i$)
so that the resulting tree on~$5$ vertices is a valid  labeled floor
diagram.
These $4$ possibilities are shown in the first column of
Figure~\ref{fig:biject}.
Similarly, there are $4$ possibilities, shown in the second column,
to connect $T_i$ to such a vertex~$d$ by a single edge.
The bijection matches each labeled floor diagram to the labeled tree
shown in the same row of the table.
\end{example}

\begin{figure}[htbp]
\begin{center}
\begin{tabular}{c|c}
%\\[.05in]
%\hline
%&\\[-.1in]
\begin{picture}(120,20)(-5,-5)\setlength{\unitlength}{2.5pt}\thicklines
\oooo\put(40,0){\circle{2}}\put(40,0){\circle*{0.5}}
\EEe\eEe\eeE
\put(25,2){\makebox(0,0){$\scriptstyle 2$}}
\qbezier(20.8,0.6)(24,5)(30,5)\qbezier(30,5)(36,5)(39.2,0.6)
\end{picture}
&
\begin{picture}(120,20)(-15,-5)\setlength{\unitlength}{2.5pt}\thicklines
\oooo\put(40,0){\circle{2}}\put(40,0){\circle*{0.5}}
\EEe\eEe\eEE
\qbezier(0.6,0.8)(3,7)(20,7)\qbezier(20,7)(37,7)(39.4,0.8)
\end{picture}
\\[.05in]
\hline
&\\[-.1in]
\begin{picture}(120,20)(-5,-5)\setlength{\unitlength}{2.5pt}\thicklines
\oooo\put(40,0){\circle{2}}\put(40,0){\circle*{0.5}}
\EEe\eEe\eeE
\put(25,2){\makebox(0,0){$\scriptstyle 2$}}
\put(31,0){\line(1,0){8}}
\put(35,2){\makebox(0,0){$\scriptstyle 3$}}
\end{picture}
&
\begin{picture}(120,20)(-15,-5)\setlength{\unitlength}{2.5pt}\thicklines
\oooo\put(40,0){\circle{2}}\put(40,0){\circle*{0.5}}
\EEe\eEe\eEE
\qbezier(10.6,0.8)(13,7)(25,7)\qbezier(25,7)(37,7)(39.4,0.8)
\end{picture}
\\[.05in]
\hline
&\\[-.1in]
\begin{picture}(120,20)(-5,-5)\setlength{\unitlength}{2.5pt}\thicklines
\oooo\put(40,0){\circle{2}}\put(40,0){\circle*{0.5}}
\EEe\eEe\eeE
\put(25,2){\makebox(0,0){$\scriptstyle 2$}}
\put(31,0){\line(1,0){8}}
\put(35,2){\makebox(0,0){$\scriptstyle 2$}}
\end{picture}
&
\begin{picture}(120,20)(-15,-5)\setlength{\unitlength}{2.5pt}\thicklines
\oooo\put(40,0){\circle{2}}\put(40,0){\circle*{0.5}}
\EEe\eEe\eEE
\qbezier(20.8,0.6)(24,5)(30,5)\qbezier(30,5)(36,5)(39.2,0.6)
\end{picture}
\\[.05in]
\hline
&\\[-.1in]
\begin{picture}(120,20)(-5,-5)\setlength{\unitlength}{2.5pt}\thicklines
\oooo\put(40,0){\circle{2}}\put(40,0){\circle*{0.5}}
\EEe\eEe\eeE
\put(25,2){\makebox(0,0){$\scriptstyle 2$}}
\put(31,0){\line(1,0){8}}
\end{picture}
&
\begin{picture}(120,20)(-15,-5)\setlength{\unitlength}{2.5pt}\thicklines
\oooo\put(40,0){\circle{2}}\put(40,0){\circle*{0.5}}
\EEe\eEe\eEE
\put(31,0){\line(1,0){8}}
\end{picture}
%\\[.05in]
%\hline
%&\\[-.1in]

\end{tabular}
\end{center}
\caption{Building a bijection between labeled floor diagrams and trees}
\label{fig:biject}
\end{figure}

\begin{lemma}
\label{lem:short-edges}
Let $\Dcal$  be a labeled floor diagram of genus~$0$ and degree~$d$,
and let $T$ be the tree on the vertex set $\{1,\dots,d\}$
associated to it by the bijection
described in Definition~\ref{def:diag-to-tree}.
Then, for any $i\in\{1,\dots,d-1\}$, the following are equivalent:
\begin{itemize}
\item
$\Dcal$ contains an edge $i\to i+1$, and this edge has weight~$1$;
\item
$T$ contains the edge $(i,i+1)$.
\end{itemize}
\end{lemma}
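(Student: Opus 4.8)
The plan is to induct on the degree~$d$, following the recursive structure of the bijection $\Dcal\mapsto T$ from Definition~\ref{def:diag-to-tree}; the base case $d=1$ is vacuous. For the inductive step I would decompose $\Dcal$ into its root vertex~$d$ and the subdiagrams $\Dcal_1,\dots,\Dcal_k$ on vertex sets $S_1,\dots,S_k$ (the components of $\Dcal$ with~$d$ deleted), exactly as in the proof of Theorem~\ref{th:cayley}; correspondingly $T$ is assembled from~$d$ and the subtrees $T_1,\dots,T_k$ on the same sets $S_1,\dots,S_k$. The structural fact I would lean on is that deleting~$d$ from either $\Dcal$ or~$T$ leaves forests whose components sit on the \emph{identical} vertex sets $S_1,\dots,S_k$, so any edge between two vertices~$<d$ is confined to a single~$S_j$.

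I would then split into cases by the location of $i$ and $i+1$. If $i+1<d$ and $i,i+1$ lie in different sets $S_j$, then by the observation above neither $\Dcal$ can contain $i\to i+1$ nor can $T$ contain $(i,i+1)$, so both sides of the equivalence are false. If $i+1<d$ and both lie in one set $S_j$, then they are consecutive in the induced order on~$S_j$; the edge $i\to i+1$ of $\Dcal$ is an edge of $\Dcal_j$, the edge $(i,i+1)$ of $T$ is an edge of $T_j$, and weights are inherited, so the equivalence follows at once from the induction hypothesis applied to the pair $(\Dcal_j,T_j)$ (transported along the order-isomorphism $S_j\cong\{1,\dots,|S_j|\}$).

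The only case with real content is $i+1=d$, the edge to the root. Here $d-1$ is forced to be the largest vertex $u_s$ of the set $S_j=\{u_1<\cdots<u_s\}$ containing it, and the edge $(d-1)\to d$ of $\Dcal$ (resp.\ $(d-1,d)$ of $T$) is precisely the unique edge joining $\Dcal_j$ (resp.\ $T_j$) to~$d$, emanating from $u_s$. I would unwind the matching rule of Definition~\ref{def:diag-to-tree}: the $\Dcal$-side choices are listed by source vertex left to right and, within each source~$v$, by weight decreasing from $1-\divv_j(v)$ down to~$1$, while the $T$-side choices are listed by source vertex left to right. Using $\sum_{v\in S_j}(1-\divv_j(v))=|S_j|$ from \eqref{eq:1-div}, the final slot of the $\Dcal$-list is exactly the pair $(u_s,1)$ and the final slot of the $T$-list is the vertex $u_s=d-1$; since the lists are matched entry by entry, the weight-$1$ edge out of $d-1$ corresponds precisely to the tree edge out of $d-1$, while every other choice (a different source, or source $d-1$ but weight~$>1$) occupies an earlier slot and hence matches a tree edge from a vertex other than $d-1$. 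The main obstacle is exactly this last verification: everything hinges on checking that the ordering convention of the bijection---largest weight first, weight~$1$ last---places the weight-$1$ edge from the rightmost vertex in the terminal position, which is the very slot assigned to the tree edge from that same vertex. This is the single point where the precise design of the bijection is used, and the only place requiring a (short) computation.
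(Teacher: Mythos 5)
Your proposal is correct and follows essentially the same route as the paper: the paper's proof consists precisely of your key observation, namely that in each recursive step of the bijection the \emph{last} entries of the two matched lists are the weight-$1$ edge from the rightmost vertex of $\Dcal_j$ to~$d$ and the edge from the rightmost vertex of $T_j$ to~$d$. The inductive scaffolding and case analysis you spell out (edges inside a common $S_j$ handled by recursion, edges across different $S_j$'s absent on both sides) is exactly what the paper leaves implicit.
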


\begin{proof}
This lemma is a consequence of the following observation:
in the ordered lists of choices involved in a
recursive step of the bijection described in
Definition~\ref{def:diag-to-tree},
the \emph{last} elements are:
\begin{itemize}
\item
the edge of weight~$1$ connecting the rightmost vertex in $\Dcal_i$ to~$d$;
\item
the edge connecting the rightmost vertex in $T_i$ to~$d$.
\end{itemize}
Cf.\ for example the last row in Figure~\ref{fig:biject}.
\end{proof}

Lemma~\ref{lem:short-edges} directly implies the following enumerative
corollary.

\begin{corollary}
\label{cor:short-edges}
For any subset $A\subset\{1,\dots,d-1\}$, the following are equal:
\begin{itemize}
\item
the number of labeled floor diagrams of genus~$0$ and degree~$d$
in which all the edges of the form $a\rightarrow a+1$,
for $a\in A$, are present, each with weight~1;
\item
the number of trees on the vertex set $\{1,\dots,d\}$
containing all the edges of the form $(a,a+1)$,
for $a\in A$.
\end{itemize}
\end{corollary}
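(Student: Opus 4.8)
The plan is to exploit the explicit bijection $\Dcal\mapsto T$ constructed in Definition~\ref{def:diag-to-tree}, which pairs each labeled floor diagram $\Dcal$ of genus~$0$ and degree~$d$ with a tree $T$ on the vertex set $\{1,\dots,d\}$. Since this is a genuine bijection between the two ambient sets, it suffices to show that it restricts to a bijection between the two subfamilies described in the corollary; the equality of cardinalities then follows at once.

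First I would fix a pair $(\Dcal,T)$ related by this bijection and invoke Lemma~\ref{lem:short-edges} separately for each index $a\in A$. The lemma asserts that $\Dcal$ contains the edge $a\to a+1$ of weight~$1$ if and only if $T$ contains the edge $(a,a+1)$. Applying this equivalence to every $a\in A$ simultaneously, I conclude that $\Dcal$ contains all of the short edges $a\to a+1$ (for $a\in A$) precisely when $T$ contains all of the edges $(a,a+1)$ (for $a\in A$).

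Thus the bijection $\Dcal\mapsto T$ maps the set of diagrams satisfying the first condition onto the set of trees satisfying the second, and its inverse does the reverse; the two sets are therefore in bijection, proving that their cardinalities coincide. There is no genuine obstacle here, as all of the substantive work resides in Lemma~\ref{lem:short-edges}. The only point requiring care---and it is a mild one---is that the equivalence in that lemma is a \emph{pointwise} property of the single fixed bijection, so it may be imposed as a conjunction over all $a\in A$ without any interaction between distinct indices; this is exactly what legitimizes passing from the per-index equivalence to the simultaneous one needed for the restricted bijection.
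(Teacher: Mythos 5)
Your proposal is correct and is precisely the paper's argument: the paper states that Lemma~\ref{lem:short-edges} ``directly implies'' the corollary, meaning exactly that the bijection of Definition~\ref{def:diag-to-tree} preserves, for each index $a$, the presence of the weight-$1$ edge $a\to a+1$ versus the edge $(a,a+1)$, and hence restricts to a bijection between the two subfamilies. Your added remark about the conjunction over $a\in A$ being legitimate because the equivalence holds pointwise for a single fixed bijection is the same (implicit) observation the paper relies on.
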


As an application of Corollary~\ref{cor:short-edges}, we obtain:

\begin{corollary}
For a given $a\in\{1,\dots,d-1\}$,
the number of labeled floor diagrams of genus~$0$ and degree~$d$
containing an edge $a\rightarrow a+1$, with weight~1, is
equal~to~$2d^{d-3}$. In particular, this number does not depend
on~$a$.

More generally, for $1\le a<a+b\le d$,
the number of labeled floor diagrams of genus~$0$ and degree~$d$ which
contain the edges $a\rightarrow a+1\rightarrow\cdots\rightarrow a+b$,
all with weight~1, is equal to~$(b+1)d^{d-b-2}$.
\end{corollary}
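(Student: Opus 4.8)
The plan is to reduce the statement, via Corollary~\ref{cor:short-edges}, to a pure tree-counting problem and then invoke a weighted version of Cayley's formula. Under that corollary, the requirement that a genus-$0$ degree-$d$ labeled floor diagram contain the weight-$1$ edges $a\to a+1\to\cdots\to a+b$ translates exactly into the requirement that a tree on $\{1,\dots,d\}$ contain the path with edges $(a,a+1),(a+1,a+2),\dots,(a+b-1,a+b)$. Thus it suffices to count labeled trees on $d$ vertices that contain a fixed such path; the first assertion is merely the special case $b=1$, counting trees that contain a single prescribed edge. (Note that the specific indices $a,\dots,a+b$ are irrelevant: only the shape of the prescribed forest matters, which is why the count is independent of~$a$.)

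The key ingredient I would record is the following weighted Cayley formula. If $F$ is a fixed forest on $\{1,\dots,d\}$ whose connected components have sizes $n_1,\dots,n_k$ (isolated vertices counting as components of size~$1$, so $n_1+\cdots+n_k=d$), then the number of labeled trees on $\{1,\dots,d\}$ that contain $F$ equals
\[
d^{\,k-2}\prod_{i=1}^k n_i.
\]
Taking $F$ empty recovers $d^{d-2}$, consistent with Theorem~\ref{th:cayley}. To prove this identity I would contract each component of $F$ to a super-vertex: a tree containing $F$ is the same datum as a tree on the $k$ super-vertices together with, for each connecting edge incident to super-vertex~$i$, a choice of one of its $n_i$ vertices as the true endpoint. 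For a tree on the super-vertices with degree sequence $(\delta_1,\dots,\delta_k)$ there are $\prod_i n_i^{\delta_i}$ such refinements, so summing over all trees on $k$ vertices and applying the multivariate Cayley identity
\[
\sum_{T}\ \prod_{i=1}^k x_i^{\delta_i(T)}=\Bigl(\prod_{i=1}^k x_i\Bigr)\Bigl(\sum_{i=1}^k x_i\Bigr)^{k-2}
\]
(itself a one-line consequence of the multinomial theorem applied to the count $\binom{k-2}{\delta_1-1,\dots,\delta_k-1}$ of trees with a given degree sequence) with $x_i=n_i$ yields the claimed product $d^{k-2}\prod_i n_i$.

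It then remains only to specialize. For the general assertion the forest $F$ is a path on the $b+1$ vertices $a,\dots,a+b$ together with $d-b-1$ isolated vertices, so it has $k=d-b$ components, one of size $b+1$ and the rest of size~$1$; the formula gives $d^{\,(d-b)-2}\,(b+1)=(b+1)\,d^{\,d-b-2}$, and the case $b=1$ is $2d^{d-3}$. I expect the only real work to be in justifying the weighted Cayley formula, which is standard and can alternatively be cited; everything else is a direct translation through Corollary~\ref{cor:short-edges} and a substitution. The one point to handle carefully is the boundary behavior (e.g.\ $b=d-1$, where the path already spans all vertices and the count must collapse to~$1$), which the formula produces correctly since then $k=1$ and $d^{-1}\cdot d=1$.
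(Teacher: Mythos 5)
Your proposal is correct, and it follows the paper's proof in its first and decisive step: both reduce, via Corollary~\ref{cor:short-edges}, to counting trees on $\{1,\dots,d\}$ containing the fixed path $(a,a+1),\dots,(a+b-1,a+b)$, and both then contract the path to a single super-vertex. Where you diverge is in the final enumeration. The paper identifies the count with the number of spanning trees of the multigraph obtained from $K_d$ by contracting the $b+1$ vertices and invokes the Matrix-Tree Theorem, leaving the determinant to the reader (it is the determinant of $dI-J$ of size $d-b-1$, with eigenvalues $b+1$ once and $d$ with multiplicity $d-b-2$, giving $(b+1)d^{d-b-2}$). You instead prove a weighted Cayley formula --- the number of trees containing a fixed forest with component sizes $n_1,\dots,n_k$ is $d^{k-2}\prod_i n_i$ --- via the degree-sequence identity $\sum_T\prod_i x_i^{\delta_i(T)}=\bigl(\prod_i x_i\bigr)\bigl(\sum_i x_i\bigr)^{k-2}$, which follows from the multinomial theorem. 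Your route is longer but entirely self-contained (no determinants, no citation needed beyond the multinomial count of trees by degree sequence), it yields the more general statement for an arbitrary prescribed forest rather than just a path, and it makes the independence of~$a$ and the boundary case $b=d-1$ transparent; the paper's route is shorter at the cost of outsourcing the computation to the Matrix-Tree Theorem. Both arguments are complete and standard, so this is a legitimate alternative proof of the same corollary.
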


\begin{proof}
By Corollary~\ref{cor:short-edges}, the quantity in question is equal
to the number of trees on the vertex set $\{1,\dots,d\}$ containing
all the edges $(a,a+1)$, $(a+1,a+2)$, \dots, $(a+b-1,a+b)$.
Equivalently, this is the number of spanning trees in the graph
obtained from the $d$-vertex complete graph~$K_d$ by contracting all edges
connecting some $b+1$ vertices to each other.
Computing the number of such spanning trees is a straightforward
application of the Matrix-Tree Theorem (see, e.g.,
\cite[Theorem~36.1]{van-lint-wilson} or \cite[Theorem~5.6.8]{ec2}),
which is left to the reader.
\end{proof}

%We note that the sequence of numbers $2d^{d-3}$ appears as
%\cite[A089104]{sloane}.

Another curious enumerative result concerning labeled floor diagrams
is the following byproduct of our proof of
Theorem~\ref{th:Nd0-recurrence}. 

\begin{proposition}
The number of labeled floor diagrams of genus~$0$ and degree~$d$
containing an edge $e$ of weight $w(e)=d-1$ is equal to $(d-2)!$.
\end{proposition}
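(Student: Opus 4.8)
The plan is to pin down the structure of any genus-$0$ degree-$d$ labeled floor diagram $\Dcal$ that contains an edge $e$ of weight $d-1$, show that such diagrams are in bijection with the increasing rooted trees already analyzed in the proof of Theorem~\ref{th:Nd0-recurrence}, and then invoke the classical count of the latter. The whole argument rests on one identity: for an edge $e\colon u\to v$ in the tree~$\Dcal$ (recall $g=0$ forces $\Dcal$ to be a tree), deleting $e$ splits the vertex set into the part $A$ containing $u$ and the part $B$ containing $v$, and since $e$ is the only edge across the cut, summing the divergence over $A$ yields $\sum_{a\in A}\divv(a)=w(e)$.

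First I would bound edge weights. Each term $\divv(a)\le 1$ by~\eqref{eq:div}, so $w(e)\le|A|=d-|B|\le d-1$; thus weight $d-1$ is maximal, and achieving it forces $|B|=1$ together with $\divv(a)=1$ for every $a\in A$. Next I would locate the endpoints. The singleton $B=\{v\}$ gives $\divv(v)=-(d-1)$, and since the largest vertex $d$ receives only incoming edges (hence has negative divergence) it cannot lie in $A$, so $v=d$. Likewise the largest vertex $d-1$ of $A=\{1,\dots,d-1\}$ would have only incoming edges within $A$ unless it is adjacent to $d$, so the requirement $\divv(d-1)=1$ forces $u=d-1$. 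Hence $e$ must be the edge $(d-1)\to d$ of weight $d-1$; in particular it is unique (a second such edge would raise the genus), so no diagram is counted twice.

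It then remains to count the possibilities for the residual diagram $A$ on $\{1,\dots,d-1\}$. After deleting $e$, every vertex of $A$ has divergence $1$ except the top vertex $d-1$, whose divergence within $A$ is $2-d$; equivalently $1-\divv_A(d-1)=d-1$ while $1-\divv_A(a)=0$ for all other $a$. Thus $A$ is precisely a degree-$(d-1)$, genus-$0$ floor diagram admitting a marking whose single extra edge, running from the top vertex to the sink, has weight $d-1$. By the analysis in the proof of Theorem~\ref{th:Nd0-recurrence}, such diagrams are exactly the increasing rooted trees on $\{1,\dots,d-1\}$, the weight of each edge being the corresponding hooklength. Conversely, any such $A$ reattaches to a valid $\Dcal$ by adding back $(d-1)\to d$ with weight $d-1$, since then $\divv_\Dcal(d-1)=(2-d)+(d-1)=1$ and $\divv_\Dcal(d)=-(d-1)$ both satisfy~\eqref{eq:div} and genus $0$ is preserved. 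This gives a bijection with increasing rooted trees on $d-1$ vertices, of which there are $(d-2)!$ (see \cite{ec1,ec2}), as claimed.

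The main obstacle is the structural step of the second paragraph together with the identification used in the third: proving that a weight-$(d-1)$ edge can occur only as $(d-1)\to d$, and that what remains is forced to be an increasing rooted tree with hooklength weights. Both follow from repeated use of the cut identity $\sum_{a\in A}\divv(a)=w(e)$ — first for $e$ itself, then for each subtree of $A$ — which simultaneously orients every edge toward the root and determines its weight. Once this rigidity is established, the enumeration is immediate.
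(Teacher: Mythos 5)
Your proof is correct and follows essentially the same route as the paper's: both identify the diagrams in question with increasing rooted trees on $d-1$ vertices via the analysis in the proof of Theorem~\ref{th:Nd0-recurrence}, and then invoke the classical count $(d-2)!$ of such trees. The only difference is that you spell out the structural rigidity (the cut identity forcing the weight-$(d-1)$ edge to be $(d-1)\to d$ and the remaining weights to be hooklengths), which the paper's two-sentence proof leaves implicit.
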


\begin{proof}
These labeled floor diagrams are in bijection with increasing rooted
trees on $d-1$ vertices 
(see the proof of Theorem~\ref{th:Nd0-recurrence}).
The number of such trees is well known to be equal to~$(d-2)!$;
see, e.g., \cite[Proposition~1.3.16]{ec1}.
\end{proof}

It is easy to see from Definition~\ref{def:lfd} that $d-1$ is the
largest possible edge weight in a floor diagram
of degree~$d$.

\section{Conjectures and open problems}
\label{sec:conjectures}

\subsection{Higher genera}
One would obviously like to extend Theorem~\ref{th:cayley}
beyond the case $g=0$.

\begin{problem}
Enumerate labeled floor diagrams of degree~$d$ and genus~$g>0$.
\end{problem}

At this moment, we do not have a conjectural formula for the number
$l_{d,g}$ of such diagrams.
Using the data in Appendix~A, 
%From Figure~\ref{fig:d=4}, 
one concludes that
\[
l_{3,1}=1,\ \
l_{4,1}=13,\ \
l_{4,2}=5,\ \
l_{4,3}=1.
\]
% it seems that l_{5,1}=180 and l_{5,2}=140

\medskip

The rest of this section is devoted exclusively to the case $g=0$.

\subsection{Tree statistics}
Let $\varphi_d$ be a bijection that maps a
labeled tree~$T$ on $d$ vertices to a labeled floor diagram
$\Dcal=\varphi_d(T)$ of genus~$0$ and degree~$d$;
an example of such a bijection was given in
Definition~\ref{def:diag-to-tree}.
We can then lift the functions $\Dcal\mapsto\mu(\Dcal)$ and
$\Dcal\mapsto\nu(\Dcal)$ to the corresponding \emph{tree statistics}
\begin{align*}
T&\mapsto \hat\mu(T)=\mu(\varphi_d(T)), \\
T&\mapsto \hat\nu(T)=\nu(\varphi_d(T))
\end{align*}
which are obviously equidistributed with $(\mu,\nu)$.
Then Theorem~\ref{th:bm-curves} yields a formula for
the Gromov-Witten numbers~$N_{d,0}$ in terms of $\hat\mu$ and~$\hat\nu$:
\begin{equation}
\label{eq:N=sum-hats}
N_{d,0}=\sum_T \hat\mu(T)\,\hat\nu(T),
\end{equation}
the sum over all labeled trees~$T$ on $d$ vertices.

The tree statistics $\hat\mu$ and~$\hat\nu$ derived
from the bijection of Definition~\ref{def:diag-to-tree} turn out to be
quite complicated, so the resulting formula~\eqref{eq:N=sum-hats} is
not as elegant as one might desire.
This naturally leads to the following problem.

\begin{problem}
Find tree statistics $\hat\mu$ and~$\hat\nu$, as conceptually simple as
possible, whose joint distribution on the set of labeled trees on
$d$ vertices coincides with the joint distribution of $\mu$ and~$\nu$
on the set of labeled floor diagrams of degree~$d$ and genus~$0$.
\end{problem}

\subsection{Recurrences of Kontsevich and Caporaso-Harris}
\label{sec:kontsevich-caporaso-harris}

As mentioned in \cite[Exercice~6.2]{brugalle}
(cf.\ also \cite{arroyo-brugalle-lopez}), it is possible to use
Theorem~\ref{th:combin-rule-two-ptns} to provide a combinatorial
derivation of the Caporaso-Harris recurrence, 
somewhat similar in spirit to the proof given by
A.~Gathmann and H.~Markwig~\cite{gathmann-markwig}.

The celebrated formula of Kontsevich \cite[(5.17)]{kontsevich-manin}
determines the genus-$0$ Gromov-Witten invariants~$N_{d,0}$ by means of the
recursive relation
\begin{equation}
\label{eq:kontsevich}
N_{d,0}=\sum_{k+l=d} N_{k,0}\,N_{l,0}\, k^2\,l\,
\Bigl(
l\binom{3d-4}{3k-2}-k\binom{3d-4}{3k-1}
\Bigr),
\end{equation}
for $d\ge 2$.
Even though this recurrence looks much simpler than Caporaso-Harris's,
deriving it directly from Theorem~\ref{th:bm-curves} requires
nontrivial effort. 
The blueprint for such a derivation is provided by Kontsevich's
original proof; in order to translate this proof into a
combinatorial language, one likely needs to extend the notion of a
(labeled) floor diagram to allow for a real parameter,
corresponding to the tropical cross-ratio of the appropriate point
configuration. 
While we foresee no insurmountable obstacles to implementing this plan,
the technical difficulties involved are substantial enough to require 
a separate paper. 

\begin{problem}%[\emph{Combinatorial proof of Kontsevich's formula}]
\label{problem:kontsevich}
Give a direct proof of Kontsevich's recursion \eqref{eq:kontsevich}
based on the combinatorial definition of the numbers~$N_{d,0}$ given
by formula~\eqref{th:bm-curves}, with $g=0$.
\end{problem}

This problem was also posed independently in
\cite[Exercice~7.2]{brugalle}.

\subsection{Alternating trees}
%\label{sec:alternating-trees}
%
This class of labeled trees was introduced by A.~Postnikov \emph{et al.}\
\cite{gelfand-graev-postnikov, postnikov};
see~\cite[Section~4]{stanley-pnas} for a survey of related topics.
An \emph{alternating tree} $T$ is a tree on the vertex set $\{1,\dots,d\}$
such that the vertices adjacent to any given vertex~$v$
either all have smaller labels than~$v$, or all have larger labels.
That is, $T$~must not contain a $3$-vertex subtree
$a-\!\!\!-b-\!\!\!-c$ with $a<b<c$.

Let $a_d$ denote the number of alternating trees on $d$ vertices.
For example, direct inspection of 
the first table in Appendix~A 
%Figure~\ref{fig:g=0} 
shows that
$a_1=a_2=1$, $a_3=2$, $a_4=7$.
Postnikov has shown
\cite[Theorem~1]{postnikov}\cite[Exercise~5.41(b)]{ec2}
that
\[
a_d=\frac{1}{d\cdot 2^{d-1}}\sum_{k=1}^d \binom{d}{k} k^{d-1}.
%=\frac{1}{2^{d-1}}\sum_{k=1}^d \binom{d-1}{k-1} k^{d-2}.
\]

\begin{problem}
Prove or disprove:
The number of trees~$T$ on the vertex set $\{1,\dots,d\}$ for which there
is a labeled floor diagram whose underlying tree is~$T$ is equal
to~$a_d\,$.
\end{problem}

In other words, if we ignore edge weights, then the trees
obtained from labeled floor diagrams of genus~$0$ are conjecturally
equinumerous to the alternating trees.

A related (but different) enumerative problem is the following.

\begin{problem}
Enumerate multiplicity-free labeled floor diagrams of degree~$d$ and
genus~$0$, i.e., those diagrams in which $w(e)=1$ for every edge~$e$.
\end{problem}

Our calculations show that for $d=1,2,3,4,5,6$ the number of such
diagrams is $1,1,2,7,36, 245$, respectively.
These values match the first terms of the sequence
\cite[A029768]{sloane} (see also \cite[Exercise~5.2.20]{bll})
that enumerates increasing rooted trees with
cyclically ordered branches.

\subsection{Generalizations to other Lie types}

\begin{problem}
Assume that $g=0$.
Is there a natural generalization of Theorem~\ref{th:bm-curves}
(including the numbers~$N_{d,0}$ and
%(complete with an intersection-theoretic interpretation)
the notion of a labeled floor diagram)
associated with an arbitrary finite indecomposable crystallographic
root system~$\Phi$?
\end{problem}

The possibility of such a generalization is prompted by existing
interpretations of labeled trees on $d$ vertices
(hence, by extension, labeled floor diagrams of genus~$0$ and degree~$d$)
%(resp., alternating trees on $d$ vertices)
as ``type~$A$ objects,'' i.e.,
combinatorial gadgets associated with a root system of type~$A$.
Such an association can actually be made in at least two substantially
different ways, involving root systems of types $A_{d-2}$
and~$A_{d-1}$, respectively.
Both constructions are fairly well known,
so we present them cursorily,
referring the reader to relevant sources for further
details.

Let $\Phi_{>0}$ denote the set  of positive roots in the root system~$\Phi$.
The first construction is based on the notion of
the \emph{Shi arrangement}, the arrangement of hyperplanes in the
(co-)root space of $\Phi$ defined by the equations
\[
\langle x,\alpha\rangle\in \{0,1\}, \quad \alpha\in\Phi_{>0}\,.
\]
It was conjectured by R.~W.~Carter and proved by
J.-Y.~Shi~\cite[Theorem~8.1]{shi}
that the number of regions of this arrangement
(i.e., the number of connected components in the
complement to the union of these hyperplanes) is equal to $(h+1)^n$,
where $n$ is the rank of $\Phi$ and $h$ is the Coxeter number.
If $\Phi$ is of type~$A_{d-2}$, then $n=d-2$ and $h=d-1$, so
$(h+1)^n=d^{d-2}$, the number of labeled trees on $d$ vertices.
Explicit bijections are known (see \cite[Section~5]{stanley-pnas}
and~\cite{athanasiadis-linusson})
that identify such trees,
and therefore labeled floor diagrams of genus~$0$ and degree~$d$,
with the regions of the Shi arrangement of type~$A_{d-2}$.
It is not unreasonable to anticipate a generalization of this
correspondence to suitably defined labeled floor diagrams of other
types.

It is worth mentioning that the alternating trees discussed
earlier in this section have a natural analogue for
any root system~$\Phi$, since they are equinumerous to (and can be
identified with) the regions of the \emph{Linial arrangement}
\[
\langle x,\alpha\rangle=1, \quad \alpha\in\Phi_{>0}\,.
\]
(This was conjectured by R.~Stanley and proved by
A.~Postnikov~\cite[Section~4.2]{postnikov}\cite[Theorem~8.2]{postnikov-stanley}\cite[Exercise~5.41(h)]{ec2}
and later by
C.~Athanasiadis~\cite[Theorem~4.1]{athanasiadis}.)

%Parking functions \cite[Exercise~5.49]{ec2}

%Pr\"ufer codes \cite[Section~2]{van-lint-wilson}

%Shi arrangement \cite[Exercise~5.50a]{ec2}

The second construction involves the \emph{noncrossing partition
  lattice} $\operatorname{NC}(\Phi)$
associated with the root system~$\Phi$.
By a theorem of F.~Chapoton \cite[Proposition~9]{chapoton},
the number of maximal chains in $\operatorname{NC}(\Phi)$ is equal
  to~$\dfrac{h^n n!}{|W|}$, where $W$ denotes the associated Weyl
  group.
For $\Phi$ of type~$A_{d-1}$, one recovers Cayley's formula.

\section{Welschinger invariants and odd floor diagrams}
\label{sec:welschinger}

The Gromov-Witten number $N_{d,0}$ has a ``real'' counterpart,
the \emph{Welschinger invariant}~$W_d$ \cite{welschinger1,
  welschinger2}
that counts \emph{real} rational curves of degree~$d$ through generic
$3d-1$ points on the real projective plane, each with a certain sign. 
To be specific, to a nodal algebraic curve in $\R{\mathbb P}^2$,
let us associate a sign that equals $(-1)$ to the power of the number of
its solitary nodes (i.e., points locally given by $x^2+y^2=0$). 
These signs were introduced by Welschinger~\cite{welschinger2}
who proved that the signed count~$W_d$  
does not depend on the choice of a configuration.

\iffalse
in the case $g=0$,  
the corresponding signed count of the real curves passing through 
a generic configuration of $3d-1+g$ points
in $\R{\mathbb P}^2$ does not depend on the choice of a configuration.

\begin{theorem}
\label{th:real-geom-correspondence}
If $t>1$ is sufficiently large then for
any configuration ${\mathcal P^{\R}}\subset(\R^\times)^2$ of $3g-1+d$ points such that
\[
\Log_t({\mathcal P^{\R}})=\Pcal
\]
we have a map from the set of real algebraic curves
of genus $g$ and degree $d$ passing via ${\mathcal P^{\R}}$
to the set of tropical morphisms of genus $g$ and degree $d$ passing
via $\Pcal$. Each tropical morphism $h$ corresponds to the total of $\mu_{\R}(h)$
distinct real curves if we count each real curve with sign.

Furthermore, there exists a positive real valued function $\epsilon(t)\to 0$, $t\to+\infty$
such that the $\Log_t$-amoeba of a real algebraic curve in our set is contained in
an $\epsilon(t)$-neighborhood of the image of the corresponding tropical morphism in $\R^2$.
\end{theorem}

Note that this theorem covers the $g>0$ case as well as the $g=0$ case
even though currently no enumerative real invariant is known in positive genus.
%it exhibits an invariant for configuration of poitns close to the tropical limit.

Theorem \ref{th:bm-curves-welsch} follows from Theorem
\ref{th:real-geom-correspondence}.
\fi

The following result is a restatement of (a part of)
\cite[Theorem~2]{brugalle-mikhalkin}.

\begin{theorem} %[{\rm\cite[Theorem~1]{brugalle-mikhalkin}}]
\label{th:bm-curves-welsch}
The Welschinger  invariant~$W_d$ is equal to
\begin{equation}
\label{eq:N=sum-mu-nu-welsch}
W_d=\sum_{\mu(\Dcal)\equiv 1\bmod 2} \nu(\Dcal),
\end{equation}
the sum over all labeled floor diagrams~$\Dcal$ of degree~$d$ and
genus~$0$ whose edge weights are all odd.
\end{theorem}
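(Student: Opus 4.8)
The plan is to retrace the derivation of Theorem~\ref{th:bm-curves}, substituting the \emph{real} enumerative correspondence for its complex counterpart Theorem~\ref{th:geom-correspondence}. Since $W_d$ counts rational curves, we work exclusively with genus $g=0$ throughout. First I would invoke the real version of Mikhalkin's correspondence theorem (the real half of \cite[Theorem~1]{mikhalkin-05}, as packaged in \cite[Theorem~2]{brugalle-mikhalkin}): for a vertically stretched $(d,0)$-configuration $\Pcal$ and a real point configuration $\Pcal^{\C}\subset(\R^\times)^2$ with $\Log_t(\Pcal^{\C})=\Pcal$ and $t$ large, the signed count of real rational curves of degree~$d$ through $\Pcal^{\C}$ equals $\sum_h \mu_{\R}(h)$, the sum over plane tropical curves $h$ of genus~$0$ and degree~$d$ passing through~$\Pcal$. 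Here the real (Welschinger) multiplicity factors over the trivalent vertices of~$h$, namely $\mu_{\R}(h)=\prod_V \mu_{\R}(V)$, with each vertex contributing $0$ when its complex multiplicity $m_V$ is even and $(-1)^{(m_V-1)/2}$ when $m_V$ is odd.

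Next I would compute $\mu_{\R}(h)$ explicitly in the floor-diagram geometry of Lemma~\ref{lem:floor-stretched}. There every trivalent vertex of~$h$ is the meeting of an elevator with a floor, and a short local computation (the elevator vector is $(0,\pm w)$ and the incident floor segment is primitive of the form $(1,s)$, so $|(1,s)\wedge(0,\pm w)|=w$) shows $m_V=w$, the weight of that elevator. A bounded elevator of weight~$w$ produces two such vertices, contributing $[(-1)^{(w-1)/2}]^2=(-1)^{w-1}$, which is $+1$ for odd~$w$ and $0$ for even~$w$; every unbounded elevator has weight~$1$ (cf.~Remark~\ref{rem:deg-simple}, where the down-directed unbounded edges are the $d$ copies of $(0,-1)$) and contributes $+1$. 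Consequently $\mu_{\R}(h)=1$ when all elevators of~$h$ have odd weight and $\mu_{\R}(h)=0$ otherwise, so $W_d$ counts exactly those tropical curves all of whose elevators are odd.

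Finally I would transport the count to labeled floor diagrams through the bijection $h\mapsto\tDcal(h,\Pcal)$ of Theorem~\ref{th:trop=combin}. Under this bijection the elevators of~$h$ are the edges of the marking~$\tDcal$, with weights preserved; the edges created by the marking procedure (Steps~1--2 of Definition~\ref{def-marking}) all carry weight~$1$, so all elevators of~$h$ are odd if and only if every edge of the underlying labeled floor diagram~$\Dcal$ is odd. Because $\mu(\Dcal)=\prod_e w(e)^2$, the requirement that all $w(e)$ be odd is precisely the condition $\mu(\Dcal)\equiv1\bmod 2$. Enumerating the qualifying tropical curves by their markings then yields $W_d=\sum_{\mu(\Dcal)\equiv1\bmod2}\nu(\Dcal)$, as claimed.

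The substantive input---and the only genuinely geometric step---is the real correspondence theorem together with the vertex sign formula; I expect the identification $\mu_{\R}(h)=[\text{all elevators odd}]$ to be the main obstacle, since it is where the cancellation of Welschinger signs must be verified and where one must cite the real enumerative correspondence that the present excerpt only records for the complex case. Once that is granted, the passage to odd-weighted floor diagrams is purely combinatorial and parallels the proof of Theorem~\ref{th:bm-curves} verbatim.
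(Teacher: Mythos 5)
Your proposal is correct and follows essentially the route the paper itself relies on: the paper presents Theorem~\ref{th:bm-curves-welsch} as a restatement of part of \cite[Theorem~2]{brugalle-mikhalkin}, whose derivation is exactly your three steps---the Welschinger-signed real correspondence theorem, the local computation showing each trivalent vertex of a floor-decomposed curve has multiplicity equal to its elevator's weight (so the two endpoint signs of a bounded elevator cancel and the real multiplicity is $1$ precisely when all elevators are odd, $0$ otherwise), and the bijection of Theorem~\ref{th:trop=combin} converting the count into $\sum_{\mu(\Dcal)\equiv 1\bmod 2}\nu(\Dcal)$. You also correctly identify the one piece of genuinely external input, namely the real version of the correspondence theorem, since the paper only records the complex case in Theorem~\ref{th:geom-correspondence}.
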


\begin{example} %[cf.\ Example~\ref{example:N30}]
For $d=3$ and $d=4$ (cf.\ Appendix~A), %Figure~\ref{fig:d=4}),
formula~\eqref{eq:N=sum-mu-nu-welsch} gives 
\begin{align*}
W_3&= 5 + 3=8,\\
W_4&= 40+35+45+3+24+46+32+15=240. 
\end{align*}
\end{example}

A labeled floor diagram $\Dcal$ is called \emph{odd} if
%$\mu(\Dcal)\equiv 1\bmod 2$.
the weight $w(e)$ of every edge $e$ in~$\Dcal$ is an odd number.
Thus, the summation in \eqref{eq:N=sum-mu-nu-welsch} is over all odd
labeled floor diagrams~$\Dcal$ of degree~$d$ and
genus~$0$.

\begin{problem}
Enumerate the odd labeled floor diagrams of degree~$d$ and
genus~$0$.
\end{problem}

Let $b_d$ denote the number of such diagrams.
Our calculations show that
\[
b_1=1,\ \
b_2=1, \ \
b_3=2, \ \
b_4=8, \ \
b_5=46, \ \
b_6=352.
\]
Curiously, these numbers match the first terms of the sequence
\cite[A099765]{sloane} given~by
\[
b_d=\frac{1}{d} \sum_{k=0}^{\lfloor d/2\rfloor}
           (-1)^k \binom{d}{k}(d-2k)^{d-1}.
\]

Theorem~\ref{th:bm-curves-welsch} was recently used
in~\cite{arroyo-brugalle-lopez}
to obtain a Caporaso-Harris-type
recurrence for Welschinger invariants.

\newpage

\section*{Appendix~A: Labeled floor diagrams with $d\le 4$}

\medskip

%\begin{figure}[htbp]
\begin{center}
\begin{tabular}{c|c|c|c|c}
 &
$\Dcal$ %\begin{tabular}{c}labeled floor\\ diagram\end{tabular}
& labeled tree
& $\mu(\Dcal)$ %multiplicity
& $\nu(\Dcal)$ %\begin{tabular}{c} number of\\ markings\end{tabular}
\\
\hline \hline
&&&&\\[-.15in]
$d=1\quad g=0\ $ &
\begin{picture}(20,8)(-10,-4)\setlength{\unitlength}{2.5pt}\thicklines
\multiput(0,0)(10,0){1}{\circle{2}}
\end{picture}
&
\begin{picture}(20,8)(-10,-4)\setlength{\unitlength}{2.5pt}\thicklines
\multiput(0,0)(10,0){1}{\circle{2}}
\end{picture}
& 1 & 1
\\[.05in]
\hline \hline
&&&&\\[-.1in]
$d=2\quad g=0\ $ &
\begin{picture}(45,8)(-10,-4)\setlength{\unitlength}{2.5pt}\thicklines
\multiput(0,0)(10,0){2}{\circle{2}}
\Eee
\end{picture}
&
\begin{picture}(45,8)(-10,-4)\setlength{\unitlength}{2.5pt}\thicklines
\multiput(0,0)(10,0){2}{\circle{2}}
\Eee
\end{picture}
& 1 & 1
\\[.05in]
\hline \hline
&&&\\[-.1in]
&
\begin{picture}(70,8)(-10,-3)\setlength{\unitlength}{2.5pt}\thicklines
\ooo\Eee\eEe
\end{picture}
&
\begin{picture}(70,8)(-10,-3)\setlength{\unitlength}{2.5pt}\thicklines
\ooo\Eee\eEe
\end{picture}
& 1 & 5 \\[.2in]
$d=3\quad g=0\ $&\begin{picture}(70,8)(-10,-3)\setlength{\unitlength}{2.5pt}\thicklines
\ooo\Eee\eEe
\put(15,2){\makebox(0,0){$\scriptstyle 2$}}
\end{picture}
&
\begin{picture}(70,8)(-10,-3)\setlength{\unitlength}{2.5pt}\thicklines
\ooo\Eee\EEe
\end{picture}
& 4 & 1 \\[.2in]
&\begin{picture}(70,8)(-10,-3)\setlength{\unitlength}{2.5pt}\thicklines
\ooo\EEe\eEe
\end{picture}
&
\begin{picture}(70,8)(-10,-3)\setlength{\unitlength}{2.5pt}\thicklines
\ooo\EEe\eEe
\end{picture}
& 1 & 3
\\[.05in]
\hline\hline
&&&\\[-.1in]
&\begin{picture}(95,8)(-10,-1)\setlength{\unitlength}{2.5pt}\thicklines
\oooo\Eee\eEe\eeE
\end{picture}
&
\begin{picture}(95,8)(-10,-1)\setlength{\unitlength}{2.5pt}\thicklines
\oooo\Eee\eEe\eeE
\end{picture}
& 1 & 40 \\[.2in]
&\begin{picture}(95,8)(-10,-1)\setlength{\unitlength}{2.5pt}\thicklines
\oooo\Eee\eEe\eeE
\put(25,2){\makebox(0,0){$\scriptstyle 2$}}
\end{picture}
&
\begin{picture}(95,8)(-10,-1)\setlength{\unitlength}{2.5pt}\thicklines
\oooo\Eee\eEe\eEE
\end{picture}
& 4 & 8 \\[.2in]
&\begin{picture}(95,8)(-10,-1)\setlength{\unitlength}{2.5pt}\thicklines
\oooo\Eee\eEe\eEE
\end{picture}
&
\begin{picture}(95,8)(-10,-1)\setlength{\unitlength}{2.5pt}\thicklines
\oooo\Eee\eEe\EEE
\end{picture}
& 1 & 35 \\[.2in]
&\begin{picture}(95,8)(-10,-1)\setlength{\unitlength}{2.5pt}\thicklines
\oooo\Eee\eEe\eeE
\put(15,2){\makebox(0,0){$\scriptstyle 2$}}
\end{picture}
&
\begin{picture}(95,8)(-10,-1)\setlength{\unitlength}{2.5pt}\thicklines
\oooo\Eee\EEe\eeE
\end{picture}
& 4 & 15 \\[.2in]
&\begin{picture}(95,8)(-10,-1)\setlength{\unitlength}{2.5pt}\thicklines
\oooo\Eee\eEe\eeE
\put(15,2){\makebox(0,0){$\scriptstyle 2$}}
\put(25,2){\makebox(0,0){$\scriptstyle 2$}}
\end{picture}
&
\begin{picture}(95,8)(-10,-1)\setlength{\unitlength}{2.5pt}\thicklines
\oooo\Eee\EEe\eEE
\end{picture}
& 16 & 6 \\[.2in]
&\begin{picture}(95,8)(-10,-1)\setlength{\unitlength}{2.5pt}\thicklines
\oooo\Eee\eEe\eeE
\put(15,2){\makebox(0,0){$\scriptstyle 2$}}
\put(25,2){\makebox(0,0){$\scriptstyle 3$}}
\end{picture}
&
\begin{picture}(95,8)(-10,-1)\setlength{\unitlength}{2.5pt}\thicklines
\oooo\Eee\EEe\EEE
\end{picture}
& 36 & 1 \\[.2in]
&\begin{picture}(95,8)(-10,-1)\setlength{\unitlength}{2.5pt}\thicklines
\oooo\EEe\eEe\eeE
\end{picture}
&
\begin{picture}(95,8)(-10,-1)\setlength{\unitlength}{2.5pt}\thicklines
\oooo\EEe\eEe\eeE
\end{picture}
& 1 & 45 \\[.2in]
&\begin{picture}(95,8)(-10,-1)\setlength{\unitlength}{2.5pt}\thicklines
\oooo\EEe\eEe\eeE
\put(25,2){\makebox(0,0){$\scriptstyle 2$}}
\end{picture}
&
\begin{picture}(95,8)(-10,-1)\setlength{\unitlength}{2.5pt}\thicklines
\oooo\EEe\eEe\eEE
\end{picture}
& 4 & 18 \\[.2in]
$d=4\quad g=0\ $&\begin{picture}(95,8)(-10,-1)\setlength{\unitlength}{2.5pt}\thicklines
\oooo\EEe\eEe\eeE
\put(25,2){\makebox(0,0){$\scriptstyle 3$}}
\end{picture}
&
\begin{picture}(95,8)(-10,-1)\setlength{\unitlength}{2.5pt}\thicklines
\oooo\EEe\eEe\EEE
\end{picture}
& 9 & 3 \\[.2in]
&\begin{picture}(95,8)(-10,-1)\setlength{\unitlength}{2.5pt}\thicklines
\oooo\Eee\eEE\eeE
\end{picture}
&
\begin{picture}(95,8)(-10,-1)\setlength{\unitlength}{2.5pt}\thicklines
\oooo\Eee\eEE\eeE
\end{picture}
& 1 & 24 \\[.2in]
&\begin{picture}(95,8)(-10,-1)\setlength{\unitlength}{2.5pt}\thicklines
\oooo\Eee\eEE\eeE
\put(20,7){\makebox(0,0){$\scriptstyle 2$}}
\end{picture}
&
\begin{picture}(95,8)(-10,-1)\setlength{\unitlength}{2.5pt}\thicklines
\oooo\Eee\EEE\eeE
\end{picture}
& 4 & 3 \\[.2in]
&\begin{picture}(95,8)(-10,-1)\setlength{\unitlength}{2.5pt}\thicklines
\oooo\EEe\eEE\eeE
\end{picture}
&
\begin{picture}(95,8)(-10,-1)\setlength{\unitlength}{2.5pt}\thicklines
\oooo\EEe\eEE\eeE
\end{picture}
& 1 & 46 \\[.2in]
&\begin{picture}(95,8)(-10,-1)\setlength{\unitlength}{2.5pt}\thicklines
\oooo\EEe\eEE\eeE
\put(25,2){\makebox(0,0){$\scriptstyle 2$}}
\end{picture}
&
\begin{picture}(95,8)(-10,-1)\setlength{\unitlength}{2.5pt}\thicklines
\oooo\EEe\eEE\EEE
\end{picture}
& 4 & 7 \\[.2in]
&\begin{picture}(95,8)(-10,-1)\setlength{\unitlength}{2.5pt}\thicklines
\oooo\EEE\eEe\eeE
\end{picture}
&
\begin{picture}(95,8)(-10,-1)\setlength{\unitlength}{2.5pt}\thicklines
\oooo\eEe\eeE\EEE
\end{picture}
& 1 & 32 \\[.2in]
&\begin{picture}(95,8)(-10,-1)\setlength{\unitlength}{2.5pt}\thicklines
\oooo\EEE\eEe\eeE
\put(25,2){\makebox(0,0){$\scriptstyle 2$}}
\end{picture}
&
\begin{picture}(95,8)(-10,-1)\setlength{\unitlength}{2.5pt}\thicklines
\oooo\eEe\eEE\EEE
\end{picture}
& 4 & 5 \\[.2in]
&\begin{picture}(95,8)(-10,-1)\setlength{\unitlength}{2.5pt}\thicklines
\oooo\EEE\eEE\eeE
\end{picture}
&
\begin{picture}(95,8)(-10,-1)\setlength{\unitlength}{2.5pt}\thicklines
\oooo\eeE\eEE\EEE
\end{picture}
& 1 & 15
\\[-.05in]
\end{tabular}
\end{center}
%\caption{Labeled floor diagrams with $g=0$, $d\le 4$}
%\label{fig:g=0}
%\end{figure}

\newpage

%%%%%%%%%%%%%%%%%%%%%%%%%%%%%%%%%%%%%%%%%%%%%%%%%%%%%%%%%%%%%%%%

%\begin{figure}[htbp]
\begin{center}
\begin{tabular}{c|c|c|c}
 & $\Dcal$ %\begin{tabular}{c}labeled floor\\ diagram\end{tabular}
& $\mu(\Dcal)$ %multiplicity
& $\nu(\Dcal)$ %\begin{tabular}{c} number of\\ markings\end{tabular}
\\ %[.1in]
\hline \hline
&&&\\[-.05in]
$d=3\quad g=1\ $ &\begin{picture}(70,8)(-10,-3)\setlength{\unitlength}{2.5pt}\thicklines
\ooo\Eee\eOe
\end{picture}
& 1 & 1
\\[.1in]
\hline\hline
&&&\\[-.05in]
&\begin{picture}(95,8)(-10,-1)\setlength{\unitlength}{2.5pt}\thicklines
\oooo\Eee\eEe\eeE\eEE
\end{picture}
& 1 & 26 \\[.2in]
&\begin{picture}(95,8)(-10,-1)\setlength{\unitlength}{2.5pt}\thicklines
\oooo\Eee\eEe\eeE\eEE
\put(25,2){\makebox(0,0){$\scriptstyle 2$}}
\end{picture}
& 4 & 4 \\[.2in]
&\begin{picture}(95,8)(-10,-1)\setlength{\unitlength}{2.5pt}\thicklines
\oooo\Eee\eOe\eeE
\end{picture}
& 1 & 15 \\[.2in]
&\begin{picture}(95,8)(-10,-1)\setlength{\unitlength}{2.5pt}\thicklines
\oooo\Eee\eOe\eeE
\put(25,2){\makebox(0,0){$\scriptstyle 2$}}
\end{picture}
& 4 & 6 \\[.2in]
&\begin{picture}(95,8)(-10,-1)\setlength{\unitlength}{2.5pt}\thicklines
\oooo\Eee\eOe\eeE
\put(25,2){\makebox(0,0){$\scriptstyle 3$}}
\end{picture}
& 9 & 1 \\[.2in]
&\begin{picture}(95,8)(-10,-1)\setlength{\unitlength}{2.5pt}\thicklines
\oooo\Eee\eOO\eeE
\end{picture}
& 1 & 6 \\[.2in]
$d=4\quad g=1\ $ &\begin{picture}(95,8)(-10,-1)\setlength{\unitlength}{2.5pt}\thicklines
\oooo\Eee\eEe\eeO
\end{picture}
& 1 & 9 \\[.2in]
&\begin{picture}(95,8)(-10,-1)\setlength{\unitlength}{2.5pt}\thicklines
\oooo\Eee\eEe\eeO
\put(15,2){\makebox(0,0){$\scriptstyle 2$}}
\end{picture}
& 4 & 7 \\[.2in]
&\begin{picture}(95,8)(-10,-1)\setlength{\unitlength}{2.5pt}\thicklines
\oooo\Eee\eEe\eeO
\put(15,2){\makebox(0,0){$\scriptstyle 2$}}
\put(25,4.5){\makebox(0,0){$\scriptstyle 2$}}
\end{picture}
& 16 & 2 \\[.2in]
&\begin{picture}(95,8)(-10,-1)\setlength{\unitlength}{2.5pt}\thicklines
\oooo\EEe\eEe\eeO
\end{picture}
& 1 & 21 \\[.2in]
&\begin{picture}(95,8)(-10,-1)\setlength{\unitlength}{2.5pt}\thicklines
\oooo\EEe\eEe\eeO
\put(25,4.5){\makebox(0,0){$\scriptstyle 2$}}
\end{picture}
& 4 & 6 \\[.2in]
&\begin{picture}(95,8)(-10,-1)\setlength{\unitlength}{2.5pt}\thicklines
\oooo\EEe\eEE\eeO
\end{picture}
& 1 & 9 \\[.2in]
&\begin{picture}(95,8)(-10,-1)\setlength{\unitlength}{2.5pt}\thicklines
\oooo\EEE\eEe\eeO
\end{picture}
& 1 & 6
\\[.1in]
\hline\hline
&&&\\[-.05in]
&\begin{picture}(95,8)(-10,-1)\setlength{\unitlength}{2.5pt}\thicklines
\oooo\EEe\eEe\eeE\eeO
\end{picture}
& 1 & 3 \\[.2in]
&\begin{picture}(95,8)(-10,-1)\setlength{\unitlength}{2.5pt}\thicklines
\oooo\Eee\eEe\eEE\eeO
\end{picture}
& 1 & 5 \\[.2in]
$d=4\quad g=2\ $&\begin{picture}(95,8)(-10,-1)\setlength{\unitlength}{2.5pt}\thicklines
\oooo\Eee\eOe\eeO
\end{picture}
& 1 & 7 \\[.2in]
&\begin{picture}(95,8)(-10,-1)\setlength{\unitlength}{2.5pt}\thicklines
\oooo\Eee\eEe\eeE\eeO
\put(15,2){\makebox(0,0){$\scriptstyle 2$}}
\end{picture}
& 4 & 1 \\[.2in]
&\begin{picture}(95,8)(-10,-1)\setlength{\unitlength}{2.5pt}\thicklines
\oooo\Eee\eOe\eeO
\put(25,4.5){\makebox(0,0){$\scriptstyle 2$}}
\end{picture}
& 4 & 2 \\[.1in]
\hline\hline
&&&\\[-.1in]
$d=4\quad g=3\ $&\begin{picture}(95,8)(-10,-1)\setlength{\unitlength}{2.5pt}\thicklines
\oooo\Eee\eOe\eeO\eeE
\end{picture}
& 1 & 1
\\[.05in]
\end{tabular}
\end{center}
%\caption{Labeled floor diagrams with $g\ge 1$, $d\le 4$}
%\label{fig:d=4}
%\end{figure}

\newpage

\section*{Appendix~B: \hbox{Tropical rational cubics and their marked floor
  diagrams}} 

%\medskip

%\begin{figure}[htbp]
\begin{center}
\vspace{.1in}
\begin{tabular}{c|c|c}
\setlength{\unitlength}{0.6pt}
\begin{picture}(90,300)(5,0)
\thicklines

%\linethickness{1.5pt}
\darkred{\linethickness{1pt}
\put(50,150){\line(0,1){40}}
\put(70,230){\line(0,1){50}}
}

\lightgreen{\linethickness{1pt}
\put(10,0){\line(0,1){150}}
\put(20,0){\line(0,1){100}}
\put(30,0){\line(0,1){110}}
}

\put(0,100){\line(1,0){20}}
\put(0,150){\line(1,0){10}}
\put(0,280){\line(1,0){70}}

\put(20,100){\line(1,1){10}}
\put(10,150){\line(1,1){40}}
\put(50,150){\line(1,1){40}}
\put(70,230){\line(1,1){20}}

\put(70,280){\line(1,1){8.6}}
\put(90,300){\line(-1,-1){8.6}}

\put(30,110){\line(1,2){9.1}}
\put(50,150){\line(-1,-2){9.1}}

\put(50,190){\line(1,2){9.1}}
\put(70,230){\line(-1,-2){9.1}}

\put(10,10){\circle*{4}}
\put(20,50){\circle*{4}}
\put(30,90){\circle*{4}}
\put(40,130){\circle{4}}
\put(50,170){\circle*{4}}
\put(60,210){\circle{4}}
\put(70,250){\circle*{4}}
\put(80,290){\circle{4}}

\end{picture}
\ \ 
\begin{picture}(20,300)(5,0)
\thicklines

\darkred{\linethickness{1pt}
\put(0,248){\line(0,-1){36}}
\put(0,230){\vector(0,-1){5}}
\put(0,288){\line(0,-1){36}}
\put(0,270){\vector(0,-1){5}}
\put(0,208){\line(0,-1){36}}
\put(0,190){\vector(0,-1){5}}
\put(0,168){\line(0,-1){36}}
\put(0,150){\vector(0,-1){5}}

}

\lightgreen{\linethickness{1pt}
\put(0,128){\line(0,-1){36}}
\put(0,110){\vector(0,-1){5}}
\qbezier(0.6,51.9)(14,90)(0.6,128.1)
\put(8,90){\vector(0,-1){1}}
\qbezier(1.2,11.6)(40,120)(1.2,208.4)
\put(21,110){\vector(0,-1){1}}
}

\put(0,10){\circle*{4}}
\put(0,50){\circle*{4}}
\put(0,90){\circle*{4}}
\put(0,130){\circle{4}}
\put(0,170){\circle*{4}}
\put(0,210){\circle{4}}
\put(0,250){\circle*{4}}
\put(0,290){\circle{4}}

\end{picture}
{\ }&{\ }
\setlength{\unitlength}{0.6pt}
\begin{picture}(90,300)(5,0)
\thicklines

%\linethickness{1pt}
\darkred{\linethickness{1pt}
\put(50,150){\line(0,1){40}}
\put(70,230){\line(0,1){50}}
}

\lightgreen{\linethickness{1pt}
\put(10,0){\line(0,1){90}}
\put(20,0){\line(0,1){160}}
\put(30,0){\line(0,1){110}}
}

\put(0,90){\line(1,0){10}}
\put(0,160){\line(1,0){20}}
\put(0,280){\line(1,0){70}}

\put(10,90){\line(1,1){20}}
\put(20,160){\line(1,1){30}}
\put(50,150){\line(1,1){40}}
\put(70,230){\line(1,1){20}}

\put(70,280){\line(1,1){8.6}}
\put(90,300){\line(-1,-1){8.6}}

\put(30,110){\line(1,2){9.1}}
\put(50,150){\line(-1,-2){9.1}}

\put(50,190){\line(1,2){9.1}}
\put(70,230){\line(-1,-2){9.1}}

\put(10,10){\circle*{4}}
\put(20,50){\circle*{4}}
\put(30,90){\circle*{4}}
\put(40,130){\circle{4}}
\put(50,170){\circle*{4}}
\put(60,210){\circle{4}}
\put(70,250){\circle*{4}}
\put(80,290){\circle{4}}

\end{picture}
\ \ 
\begin{picture}(20,300)(5,0)
\thicklines

\darkred{\linethickness{1pt}
\put(0,248){\line(0,-1){36}}
\put(0,230){\vector(0,-1){5}}
\put(0,288){\line(0,-1){36}}
\put(0,270){\vector(0,-1){5}}
\put(0,208){\line(0,-1){36}}
\put(0,190){\vector(0,-1){5}}
\put(0,168){\line(0,-1){36}}
\put(0,150){\vector(0,-1){5}}

}

\lightgreen{\linethickness{1pt}
\put(0,128){\line(0,-1){36}}
\put(0,110){\vector(0,-1){5}}
\qbezier(1.2,11.6)(30,80)(1.2,128.4)
\put(16,70){\vector(0,-1){1}}
%\qbezier(0.6,51.9)(14,90)(0.6,128.1)
%\put(8,90){\vector(0,-1){1}}
\qbezier(1.2,51.6)(40,140)(1.2,208.4)
\put(21,130){\vector(0,-1){1}}
}

\put(0,10){\circle*{4}}
\put(0,50){\circle*{4}}
\put(0,90){\circle*{4}}
\put(0,130){\circle{4}}
\put(0,170){\circle*{4}}
\put(0,210){\circle{4}}
\put(0,250){\circle*{4}}
\put(0,290){\circle{4}}

\end{picture}
{\ }&{\ }
\setlength{\unitlength}{0.6pt}
\begin{picture}(90,300)(5,0)
\thicklines

%\linethickness{1pt}
\darkred{\linethickness{1pt}
\put(50,150){\line(0,1){40}}
\put(70,230){\line(0,1){50}}
}

\lightgreen{\linethickness{1pt}
\put(10,0){\line(0,1){80}}
\put(20,0){\line(0,1){90}}
\put(30,0){\line(0,1){170}}
}

\put(0,80){\line(1,0){10}}
\put(0,170){\line(1,0){30}}
\put(0,280){\line(1,0){70}}

\put(10,80){\line(1,1){10}}
\put(30,170){\line(1,1){20}}
\put(50,150){\line(1,1){40}}
\put(70,230){\line(1,1){20}}

\put(70,280){\line(1,1){8.6}}
\put(90,300){\line(-1,-1){8.6}}

\put(20,90){\line(1,2){19.1}}
\put(50,150){\line(-1,-2){9.1}}

\put(50,190){\line(1,2){9.1}}
\put(70,230){\line(-1,-2){9.1}}

\put(10,10){\circle*{4}}
\put(20,50){\circle*{4}}
\put(30,90){\circle*{4}}
\put(40,130){\circle{4}}
\put(50,170){\circle*{4}}
\put(60,210){\circle{4}}
\put(70,250){\circle*{4}}
\put(80,290){\circle{4}}

\end{picture}
\ \ 
\begin{picture}(20,300)(5,0)
\thicklines

\darkred{\linethickness{1pt}
\put(0,248){\line(0,-1){36}}
\put(0,230){\vector(0,-1){5}}
\put(0,288){\line(0,-1){36}}
\put(0,270){\vector(0,-1){5}}
\put(0,208){\line(0,-1){36}}
\put(0,190){\vector(0,-1){5}}
\put(0,168){\line(0,-1){36}}
\put(0,150){\vector(0,-1){5}}

}

\lightgreen{\linethickness{1pt}
%\put(0,128){\line(0,-1){36}}
%\put(0,110){\vector(0,-1){5}}
\qbezier(1.2,11.6)(30,80)(1.2,128.4)
\put(16,70){\vector(0,-1){1}}
\qbezier(1.2,91.6)(30,160)(1.2,208.4)
\put(16,150){\vector(0,-1){1}}
\qbezier(0.6,51.9)(14,90)(0.6,128.1)
\put(8,90){\vector(0,-1){1}}
}

\put(0,10){\circle*{4}}
\put(0,50){\circle*{4}}
\put(0,90){\circle*{4}}
\put(0,130){\circle{4}}
\put(0,170){\circle*{4}}
\put(0,210){\circle{4}}
\put(0,250){\circle*{4}}
\put(0,290){\circle{4}}

\end{picture}
%%%%%%%%%%%%%%%%%%%%%%%%%%%%%%%%%%%%%%%%%%%%%%%%%%%%%%%%
\\[.1in]
\hline
&&\\[-.1in]
\setlength{\unitlength}{0.6pt}
\begin{picture}(90,300)(5,0)
\thicklines

%\linethickness{1pt}
\darkred{\linethickness{1pt}
\put(50,130){\line(0,1){60}}
\put(70,230){\line(0,1){50}}
}

\lightgreen{\linethickness{1pt}
\put(10,0){\line(0,1){60}}
\put(20,0){\line(0,1){70}}
\put(40,0){\line(0,1){180}}
}

\put(0,60){\line(1,0){10}}
\put(0,180){\line(1,0){40}}
\put(0,280){\line(1,0){70}}

\put(10,60){\line(1,1){10}}
\put(40,180){\line(1,1){10}}
\put(50,130){\line(1,1){40}}
\put(70,230){\line(1,1){20}}

\put(70,280){\line(1,1){8.6}}
\put(90,300){\line(-1,-1){8.6}}

\put(20,70){\line(1,2){9.1}}
\put(50,130){\line(-1,-2){19.1}}

\put(50,190){\line(1,2){9.1}}
\put(70,230){\line(-1,-2){9.1}}

\put(10,10){\circle*{4}}
\put(20,50){\circle*{4}}
\put(30,90){\circle{4}}
\put(40,130){\circle*{4}}
\put(50,170){\circle*{4}}
\put(60,210){\circle{4}}
\put(70,250){\circle*{4}}
\put(80,290){\circle{4}}

\end{picture}
\ \ 
\begin{picture}(20,300)(5,0)
\thicklines

\darkred{\linethickness{1pt}
\put(0,248){\line(0,-1){36}}
\put(0,230){\vector(0,-1){5}}
\put(0,288){\line(0,-1){36}}
\put(0,270){\vector(0,-1){5}}
\qbezier(1.2,91.6)(30,130)(1,168.4)
\put(16,130){\vector(0,-1){1}}
\put(0,208){\line(0,-1){36}}
\put(0,190){\vector(0,-1){5}}

}

\lightgreen{\linethickness{1pt}
\put(0,88){\line(0,-1){36}}
\put(0,70){\vector(0,-1){5}}
\qbezier(1.2,11.6)(30,50)(1,88.4)
\put(16,50){\vector(0,-1){1}}
\qbezier(1.2,131.6)(30,170)(1.2,208.4)
\put(16,170){\vector(0,-1){1}}
}

\put(0,10){\circle*{4}}
\put(0,50){\circle*{4}}
\put(0,90){\circle{4}}
\put(0,130){\circle*{4}}
\put(0,170){\circle*{4}}
\put(0,210){\circle{4}}
\put(0,250){\circle*{4}}
\put(0,290){\circle{4}}

\end{picture}
{\ }&{\ }
\setlength{\unitlength}{0.6pt}
\begin{picture}(90,300)(5,0)
\thicklines

%\linethickness{1pt}
\darkred{\linethickness{1pt}
\put(40,110){\line(0,1){70}}
\put(70,230){\line(0,1){50}}
}

\lightgreen{\linethickness{1pt}
\put(10,0){\line(0,1){60}}
\put(20,0){\line(0,1){70}}
\put(50,0){\line(0,1){190}}
}

\put(0,60){\line(1,0){10}}
\put(0,180){\line(1,0){40}}
\put(0,280){\line(1,0){70}}

\put(10,60){\line(1,1){10}}
\put(40,180){\line(1,1){10}}
\put(40,110){\line(1,1){50}}
\put(70,230){\line(1,1){20}}

\put(70,280){\line(1,1){8.6}}
\put(90,300){\line(-1,-1){8.6}}

\put(20,70){\line(1,2){9.1}}
\put(40,110){\line(-1,-2){9.1}}

\put(50,190){\line(1,2){9.1}}
\put(70,230){\line(-1,-2){9.1}}

\put(10,10){\circle*{4}}
\put(20,50){\circle*{4}}
\put(30,90){\circle{4}}
\put(40,130){\circle*{4}}
\put(50,170){\circle*{4}}
\put(60,210){\circle{4}}
\put(70,250){\circle*{4}}
\put(80,290){\circle{4}}

\end{picture}
\ \ 
\begin{picture}(20,300)(5,0)
\thicklines

\darkred{\linethickness{1pt}
\put(0,128){\line(0,-1){36}}
\put(0,110){\vector(0,-1){5}}
\put(0,248){\line(0,-1){36}}
\put(0,230){\vector(0,-1){5}}
\put(0,288){\line(0,-1){36}}
\put(0,270){\vector(0,-1){5}}
\qbezier(1.2,131.6)(30,170)(1,208.4)
\put(16,170){\vector(0,-1){1}}

}

\lightgreen{\linethickness{1pt}
\put(0,88){\line(0,-1){36}}
\put(0,70){\vector(0,-1){5}}
\put(0,208){\line(0,-1){36}}
\put(0,190){\vector(0,-1){5}}
\qbezier(1.2,11.6)(30,50)(1.2,88.4)
\put(16,50){\vector(0,-1){1}}
}

\put(0,10){\circle*{4}}
\put(0,50){\circle*{4}}
\put(0,90){\circle{4}}
\put(0,130){\circle*{4}}
\put(0,170){\circle*{4}}
\put(0,210){\circle{4}}
\put(0,250){\circle*{4}}
\put(0,290){\circle{4}}

\end{picture}
{\ }&{\ }
\setlength{\unitlength}{0.6pt}
\begin{picture}(90,300)(5,0)
\thicklines

%\linethickness{1pt}
\darkred{\linethickness{1pt}
\put(50,160){\line(0,1){30}}
\put(70,230){\line(0,1){50}}
}

\put(55,180){\makebox(0,0){$\scriptstyle{2}$}}

\lightgreen{\linethickness{1pt}
\put(10,0){\line(0,1){70}}
\put(20,0){\line(0,1){80}}
\put(30,0){\line(0,1){100}}
}

\put(0,70){\line(1,0){10}}
\put(0,190){\line(1,0){50}}
\put(0,280){\line(1,0){70}}

\put(10,70){\line(1,1){10}}
\put(50,160){\line(1,1){40}}
\put(70,230){\line(1,1){20}}

\put(70,280){\line(1,1){8.6}}
\put(90,300){\line(-1,-1){8.6}}

\put(20,80){\line(1,2){10}}

\put(50,190){\line(1,2){9.1}}
\put(70,230){\line(-1,-2){9.1}}

\put(30,100){\line(1,3){9.4}}
\put(50,160){\line(-1,-3){9.4}}

\put(10,10){\circle*{4}}
\put(20,50){\circle*{4}}
\put(30,90){\circle*{4}}
\put(40,130){\circle{4}}
\put(50,170){\circle*{4}}
\put(60,210){\circle{4}}
\put(70,250){\circle*{4}}
\put(80,290){\circle{4}}

\put(70,30){\makebox(0,0){$\mu=4$}}

\end{picture}
\ \ 
\begin{picture}(20,300)(5,0)
\thicklines

\darkred{\linethickness{1pt}
\put(0,248){\line(0,-1){36}}
\put(0,230){\vector(0,-1){5}}
\put(0,288){\line(0,-1){36}}
\put(0,270){\vector(0,-1){5}}
\put(0,208){\line(0,-1){36}}
\put(0,190){\vector(0,-1){5}}
\put(0,168){\line(0,-1){36}}
\put(0,150){\vector(0,-1){5}}

}

\lightgreen{\linethickness{1pt}
\put(0,128){\line(0,-1){36}}
\put(0,110){\vector(0,-1){5}}
\qbezier(1.2,11.6)(30,80)(1.2,128.4)
\put(16,70){\vector(0,-1){1}}
\qbezier(0.6,51.9)(14,90)(0.6,128.1)
\put(8,90){\vector(0,-1){1}}
}

\put(7,150){\makebox(0,0){$\scriptstyle{2}$}}
\put(7,190){\makebox(0,0){$\scriptstyle{2}$}}

\put(0,10){\circle*{4}}
\put(0,50){\circle*{4}}
\put(0,90){\circle*{4}}
\put(0,130){\circle{4}}
\put(0,170){\circle*{4}}
\put(0,210){\circle{4}}
\put(0,250){\circle*{4}}
\put(0,290){\circle{4}}

\end{picture}
%%%%%%%%%%%%%%%%%%%%%%%%%%%%%%%%%%%%%%%%%%%%%%%%%%%%%%%
\\[.1in]
\hline
&&\\[-.17in]
\hline
&&\\[-.1in]
\setlength{\unitlength}{0.6pt}
\begin{picture}(90,300)(5,0)
\thicklines

%\linethickness{1pt}
\darkred{\linethickness{1pt}
\put(50,160){\line(0,1){70}}
\put(60,180){\line(0,1){90}}
}

\lightgreen{\linethickness{1pt}
\put(10,0){\line(0,1){70}}
\put(20,0){\line(0,1){80}}
\put(30,0){\line(0,1){100}}
}

\put(0,70){\line(1,0){10}}
\put(0,230){\line(1,0){50}}
\put(0,270){\line(1,0){60}}

\put(10,70){\line(1,1){10}}
\put(60,180){\line(1,1){30}}

\put(50,230){\line(1,1){18.6}}
\put(90,270){\line(-1,-1){18.6}}

\put(60,270){\line(1,1){18.6}}
\put(90,300){\line(-1,-1){8.6}}

\put(20,80){\line(1,2){10}}
\put(50,160){\line(1,2){10}}

\put(30,100){\line(1,3){9.4}}
\put(50,160){\line(-1,-3){9.4}}

\put(10,10){\circle*{4}}
\put(20,50){\circle*{4}}
\put(30,90){\circle*{4}}
\put(40,130){\circle{4}}
\put(50,170){\circle*{4}}
\put(60,210){\circle*{4}}
\put(70,250){\circle{4}}
\put(80,290){\circle{4}}

\end{picture}
\ \ 
\begin{picture}(20,300)(5,0)
\thicklines

\darkred{\linethickness{1pt}
\put(0,168){\line(0,-1){36}}
\put(0,150){\vector(0,-1){5}}
\qbezier(1.2,131.6)(30,170)(1,208.4)
\put(16,170){\vector(0,-1){1}}
\qbezier(1.2,171.6)(30,210)(1,248.4)
\put(16,210){\vector(0,-1){1}}
\qbezier(1.2,211.6)(30,250)(1,288.4)
\put(16,250){\vector(0,-1){1}}
}

\lightgreen{\linethickness{1pt}
\put(0,128){\line(0,-1){36}}
\put(0,110){\vector(0,-1){5}}
\qbezier(1.2,11.6)(30,80)(1.2,128.4)
\put(16,70){\vector(0,-1){1}}
\qbezier(0.6,51.9)(14,90)(0.6,128.1)
\put(8,90){\vector(0,-1){1}}
}

\put(0,10){\circle*{4}}
\put(0,50){\circle*{4}}
\put(0,90){\circle*{4}}
\put(0,130){\circle{4}}
\put(0,170){\circle*{4}}
\put(0,210){\circle*{4}}
\put(0,250){\circle{4}}
\put(0,290){\circle{4}}

\end{picture}
{\ }&{\ }
\setlength{\unitlength}{0.6pt}
\begin{picture}(90,300)(5,0)
\thicklines

%\linethickness{1pt}
\darkred{\linethickness{1pt}
\put(50,160){\line(0,1){100}}
\put(60,180){\line(0,1){60}}
}

\lightgreen{\linethickness{1pt}
\put(10,0){\line(0,1){70}}
\put(20,0){\line(0,1){80}}
\put(30,0){\line(0,1){100}}
}

\put(0,70){\line(1,0){10}}
\put(0,240){\line(1,0){60}}
\put(0,260){\line(1,0){50}}

\put(10,70){\line(1,1){10}}
\put(60,180){\line(1,1){30}}

\put(60,240){\line(1,1){8.6}}
\put(90,270){\line(-1,-1){18.6}}

\put(50,260){\line(1,1){28.6}}
\put(90,300){\line(-1,-1){8.6}}

\put(20,80){\line(1,2){10}}
\put(50,160){\line(1,2){10}}

\put(30,100){\line(1,3){9.4}}
\put(50,160){\line(-1,-3){9.4}}

\put(10,10){\circle*{4}}
\put(20,50){\circle*{4}}
\put(30,90){\circle*{4}}
\put(40,130){\circle{4}}
\put(50,170){\circle*{4}}
\put(60,210){\circle*{4}}
\put(70,250){\circle{4}}
\put(80,290){\circle{4}}

\end{picture}
\ \ 
\begin{picture}(20,300)(5,0)
\thicklines

\darkred{\linethickness{1pt}
\put(0,168){\line(0,-1){36}}
\put(0,150){\vector(0,-1){5}}
\put(0,248){\line(0,-1){36}}
\put(0,230){\vector(0,-1){5}}
\qbezier(1.2,131.6)(30,170)(1,208.4)
\put(16,170){\vector(0,-1){1}}
\qbezier(1.2,171.6)(30,240)(1.2,288.4)
\put(16,230){\vector(0,-1){1}}

}

\lightgreen{\linethickness{1pt}
\put(0,128){\line(0,-1){36}}
\put(0,110){\vector(0,-1){5}}
\qbezier(1.2,11.6)(30,80)(1.2,128.4)
\put(16,70){\vector(0,-1){1}}
\qbezier(0.6,51.9)(14,90)(0.6,128.1)
\put(8,90){\vector(0,-1){1}}
}

\put(0,10){\circle*{4}}
\put(0,50){\circle*{4}}
\put(0,90){\circle*{4}}
\put(0,130){\circle{4}}
\put(0,170){\circle*{4}}
\put(0,210){\circle*{4}}
\put(0,250){\circle{4}}
\put(0,290){\circle{4}}

\end{picture}
{\ }&{\ }
\setlength{\unitlength}{0.6pt}
\begin{picture}(90,300)(5,0)
\thicklines

%\linethickness{1pt}
\darkred{\linethickness{1pt}
\put(50,160){\line(0,1){40}}
\put(70,200){\line(0,1){80}}
}

\lightgreen{\linethickness{1pt}
\put(10,0){\line(0,1){70}}
\put(20,0){\line(0,1){80}}
\put(30,0){\line(0,1){100}}
}

\put(0,70){\line(1,0){10}}
\put(0,200){\line(1,0){50}}
\put(0,280){\line(1,0){70}}

\put(10,70){\line(1,1){10}}
\put(70,200){\line(1,1){20}}

\put(50,200){\line(1,1){8.6}}
\put(90,240){\line(-1,-1){28.6}}

\put(70,280){\line(1,1){8.6}}
\put(90,300){\line(-1,-1){8.6}}

\put(20,80){\line(1,2){10}}
\put(50,160){\line(1,2){20}}

\put(30,100){\line(1,3){9.4}}
\put(50,160){\line(-1,-3){9.4}}

\put(10,10){\circle*{4}}
\put(20,50){\circle*{4}}
\put(30,90){\circle*{4}}
\put(40,130){\circle{4}}
\put(50,170){\circle*{4}}
\put(60,210){\circle{4}}
\put(70,250){\circle*{4}}
\put(80,290){\circle{4}}

\end{picture}
\ \ 
\begin{picture}(20,300)(5,0)
\thicklines

\darkred{\linethickness{1pt}
\put(0,288){\line(0,-1){36}}
\put(0,270){\vector(0,-1){5}}
\put(0,208){\line(0,-1){36}}
\put(0,190){\vector(0,-1){5}}
\put(0,168){\line(0,-1){36}}
\put(0,150){\vector(0,-1){5}}
\qbezier(1.2,131.6)(30,200)(1.2,248.4)
\put(16,190){\vector(0,-1){1}}

}

\lightgreen{\linethickness{1pt}
\put(0,128){\line(0,-1){36}}
\put(0,110){\vector(0,-1){5}}
\qbezier(1.2,11.6)(30,80)(1.2,128.4)
\put(16,70){\vector(0,-1){1}}
\qbezier(0.6,51.9)(14,90)(0.6,128.1)
\put(8,90){\vector(0,-1){1}}
}

\put(0,10){\circle*{4}}
\put(0,50){\circle*{4}}
\put(0,90){\circle*{4}}
\put(0,130){\circle{4}}
\put(0,170){\circle*{4}}
\put(0,210){\circle{4}}
\put(0,250){\circle*{4}}
\put(0,290){\circle{4}}

\end{picture}
\end{tabular}
\end{center}
%\caption{Tropical rational cubics and their marked floor diagrams}
%\label{fig:trop-cubics}
%\end{figure}

\newpage

\end{document}